
\documentclass[a4paper, reqno, 10pt]{amsart}


\usepackage{verbatim}
\usepackage{kotex}
\usepackage[dvips]{color}
\usepackage[usenames,dvipsnames]{xcolor}
\usepackage{amssymb}
\usepackage[active]{srcltx}
\usepackage[colorlinks,pdfpagelabels,pdfstartview = FitH,bookmarksopen = true,bookmarksnumbered = true,linkcolor = NavyBlue,plainpages = false,hypertexnames = false,citecolor = OliveGreen] {hyperref}
\usepackage[italian, textsize=tiny, color=BlueGreen, backgroundcolor=BlueGreen, linecolor=BlueGreen]{todonotes}
\usepackage{mathtools}
\usepackage{empheq}
\usepackage{a4wide}
\usepackage{IEEEtrantools}

\newcommand\blfootnote[1]{%
\begingroup
\renewcommand\thefootnote{}\footnote{#1}%
\addtocounter{footnote}{-1}%
\endgroup
}


\allowdisplaybreaks


\setcounter{tocdepth}{1}




\title[Nonlocal double phase]{H\"older regularity for weak solutions to nonlocal double phase problems}


\author[Byun]{Sun-Sig Byun}
\address{Department of Mathematical Sciences and Research Institute of Mathematics,
Seoul National University, Seoul 08826, Korea}
\email{byun@snu.ac.kr}

\author[Ok]{Jihoon Ok}
\address{Department of Mathematics, Sogang University, Seoul 04107, Korea}
\email{jihoonok@sogang.ac.kr}

\author[Song]{Kyeong Song}
\address{Department of Mathematical Sciences,
Seoul National University, Seoul 08826, Korea}
\email{kyeongsong@snu.ac.kr}


\subjclass[2010]{
35R11; 
47G20; 
35D30;  
35B65; 
35R05. 
}

\keywords{Nonlocal operator; double phase; local boundedness; H\"older continuity}


\newtheorem{theorem}{Theorem}[section]

\newtheorem{lemma}[theorem]{Lemma}

\newtheorem{corollary}[theorem]{Corollary}
\theoremstyle{definition}

\newtheorem{remark}[theorem]{Remark}

\numberwithin{equation}{section}


\def\eqn#1$$#2$${\begin{equation}\label#1#2\end{equation}}
\def\charfn_#1{{\raise1.2pt\hbox{$\chi_{\kern-1pt\lower3pt\hbox{{$\scriptstyle#1$}}}$}}}

\makeatletter
\newcommand{\pushright}[1]{\ifmeasuring@#1\else\omit\hfill$\displaystyle#1$\fi\ignorespaces}
\newcommand{\pushleft}[1]{\ifmeasuring@#1\else\omit$\displaystyle#1$\hfill\fi\ignorespaces}
\makeatother


\newcommand{\R}{\mathbb{R}}


 \DeclareMathOperator*{\osc}{osc}

\def\diam{\operatorname{diam}}

\newcommand{\data}{\texttt{data}}


\def\loc{{\operatorname{loc}}}

\newcommand{\supp}{{\rm supp}}



\def\mean#1{\mathchoice%
          {\mathop{\kern 0.2em\vrule width 0.6em height 0.69678ex depth -0.58065ex
                  \kern -0.8em \intop}\nolimits_{\kern -0.4em#1}}%
          {\mathop{\kern 0.1em\vrule width 0.5em height 0.69678ex depth -0.60387ex
                  \kern -0.6em \intop}\nolimits_{#1}}%
          {\mathop{\kern 0.1em\vrule width 0.5em height 0.69678ex
              depth -0.60387ex
                  \kern -0.6em \intop}\nolimits_{#1}}%
          {\mathop{\kern 0.1em\vrule width 0.5em height 0.69678ex depth -0.60387ex
                  \kern -0.6em \intop}\nolimits_{#1}}}

\def\vintslides_#1{\mathchoice%
          {\mathop{\kern 0.1em\vrule width 0.5em height 0.697ex depth -0.581ex
                  \kern -0.6em \intop}\nolimits_{\kern -0.4em#1}}%
          {\mathop{\kern 0.1em\vrule width 0.3em height 0.697ex depth -0.604ex
                  \kern -0.4em \intop}\nolimits_{#1}}%
          {\mathop{\kern 0.1em\vrule width 0.3em height 0.697ex depth -0.604ex
                  \kern -0.4em \intop}\nolimits_{#1}}%
          {\mathop{\kern 0.1em\vrule width 0.3em height 0.697ex depth -0.604ex
                  \kern -0.4em \intop}\nolimits_{#1}}}

\newcommand{\aveint}[2]{\mathchoice%
          {\mathop{\kern 0.2em\vrule width 0.6em height 0.69678ex depth -0.58065ex
                  \kern -0.8em \intop}\nolimits_{\kern -0.45em#1}^{#2}}%
          {\mathop{\kern 0.1em\vrule width 0.5em height 0.69678ex depth -0.60387ex
                  \kern -0.6em \intop}\nolimits_{#1}^{#2}}%
          {\mathop{\kern 0.1em\vrule width 0.5em height 0.69678ex depth -0.60387ex
                  \kern -0.6em \intop}\nolimits_{#1}^{#2}}%
          {\mathop{\kern 0.1em\vrule width 0.5em height 0.69678ex depth -0.60387ex
                  \kern -0.6em \intop}\nolimits_{#1}^{#2}}}


\newtoks\by
\newtoks\paper
\newtoks\book
\newtoks\jour
\newtoks\yr
\newtoks\pages
\newtoks\vol
\newtoks\publ

\def\ota{{\hbox{\bf ???}}}
\def\cLear{\by=\ota\paper=\ota\book=\ota\jour=\ota\yr=\ota
\pages=\ota\vol=\ota\publ=\ota}
\def\endpaper{\the\by, \textit{\the\paper},
{\the\jour} \textbf{\the\vol} (\the\yr), \the\pages.\cLear}
\def\endbook{\the\by, \textit{\the\book},
\the\publ, \the\yr.\cLear}
\def\endpap{\the\by, \textit{\the\paper}, \the\jour.\cLear}
\def\endproc{\the\by, \textit{\the\paper}, \the\book, \the\publ,
\the\yr, \the\pages.\cLear}


\begin{document}
\maketitle
\begin{abstract}
We prove local boundedness and H\"older continuity for weak solutions to nonlocal double phase problems concerning the following fractional energy functional
$$
\int_{\R^n}\int_{\R^n} \frac{|v(x)-v(y)|^p}{|x-y|^{n+sp}} + a(x,y)\frac{|v(x)-v(y)|^q}{|x-y|^{n+tq}}\, dxdy,
$$
where $0<s\le t<1<p \leq q<\infty$ and $a(\cdot,\cdot) \geq 0$. For such regularity results, we identify sharp assumptions on the modulating coefficient $a(\cdot,\cdot)$ and the powers $s,t,p,q$ which are analogous to those for local double phase problems. 
\end{abstract}

\blfootnote{S.-S. Byun was supported by NRF-2021R1A4A1027378. J. Ok was supported by NRF-2017R1C1B2010328. K. Song was supported by NRF-2020R1C1C1A01014904. }

\section{Introduction} \label{introduction}
In this paper, we study the regularity theory for weak solutions to the following integro-differential equation:
\begin{equation}\label{main.eq}
\begin{aligned}
\mathcal{L}u(x) &\coloneqq \mathrm{P.V.}\int_{\mathbb{R}^{n}} |u(x)-u(y)|^{p-2}(u(x)-u(y))K_{sp}(x,y)\,dy \\
& \qquad + \mathrm{P.V.}\int_{\mathbb{R}^{n}}a(x,y)|u(x)-u(y)|^{q-2}(u(x)-u(y))K_{tq}(x,y)\,dy = 0 \quad\textrm{in }\, \Omega. 
\end{aligned}
\end{equation}
Here, $\Omega \subset \mathbb{R}^{n}$ ($n\geq 2$) is a bounded domain, $K_{sp},K_{tq}: \mathbb{R}^{n}\times\mathbb{R}^{n}\rightarrow \mathbb{R}$ are suitable kernels with orders $(s,p)$ and $(t,q)$, respectively, for some $0<s \le t <1< p \leq q<\infty$, and $a:\mathbb{R}^{n}\times\mathbb{R}^{n}\rightarrow \mathbb{R}$ is a nonnegative modulating coefficient. We note that when $a(x,y) \equiv 0$ and $K_{sp}(x,y) \equiv |x-y|^{-(n+sp)}$, it reduces to the ($s$-)fractional $p$-Laplace equation, $(-\Delta)^{s}_{p}u = 0$. Precise assumptions will be described in Section \ref{setting} below.

The regularity theory for nonlocal problems with fractional orders has been extensively studied for the last two decades. Caffarelli and Silvestre  \cite{CS1} proved Harnack inequality for the fractional Laplace equation, $(-\Delta)^{s}u \coloneqq (-\Delta)^{s}_{2}u=0$, by using an extension argument. Later, Caffarelli, Chan and Vasseur \cite{CCV1} applied De Giorgi's approach to 
linear parabolic equations with fractional orders 
involving general kernels $K_{sp}$ with $p=2$, and proved H\"older continuity of weak solutions. We refer to, for instance, \cite{CS2,Ka1,Ka2,KMS,PP1,PP2,PPS,PS, Silv} for regularity results for nonlocal linear equations with fractional orders. For nonlocal equations of fractional $p$-Laplacian type, Di Castro, Kuusi and Palatucci \cite{DKP1,DKP2} employed a nonlocal version of De Giorgi's approach to prove local H\"older continuity and Harnack inequality. Cozzi \cite{coz} extended  these results to non-homogeneous problems with lower order terms, by using fractional De Giorgi classes. Such approaches are further applied to several research areas including obstacle problems \cite{MR3503212} and measure data problems \cite{MR3339179}. We also refer to \cite{DZZ1,GaKi1,KKP1,KKP2,Lind,No1,No2} and references therein for various regularity results for nonlocal problems of fractional $p$-Laplacian type. For a general overview of the history and related topics, we refer to the survey paper \cite{palatucci2018} and the monographs \cite{BV, KP}.

Nonlocal problems with nonstandard growth are getting more and more attention in the very recent years. For local problems, there are two typical models of nonstandard growth conditions. One is the variable growth condition concerned with the function $t^{p(x)}$; the other is the double phase growth condition concerned with the function $t^{p}+a(x)t^{q}$.  Recently, the second author in this paper \cite{ok2021local} proved the local H\"older continuity  of weak solutions to nonlocal equations with variable growth by developing the technique used in \cite{DKP2}. We would like to mention a recent paper \cite{chaker2021local} in which a similar result was obtained with more restrictive assumptions. We also refer to the above papers and references therein for the research on nonlocal problems with variable growth or relevant function spaces.

A prototype of nonlocal double phase problems is the following equation:
\begin{equation}\label{model}
\begin{aligned}
\mathrm{P.V.}&\int_{\mathbb{R}^{n}} \frac{|u(x)-u(y)|^{p-2}(u(x)-u(y))}{|x-y|^{n+sp}}\,dy \\ 
&+\mathrm{P.V.}\int_{\mathbb{R}^{n}} a(x,y)\frac{|u(x)-u(y)|^{q-2}(u(x)-u(y))}{|x-y|^{n+tq}}\,dy = 0 \quad \textrm{in } \, \Omega,
\end{aligned}
\end{equation}
which is the case when $K_{sp}(x,y) \equiv |x-y|^{-(n+sp)}$ and $K_{tq}(x,y) \equiv |x-y|^{-(n+tq)}$ in \eqref{main.eq}. It is in fact the Euler-Lagrange equation of the functional
\begin{equation}\label{model.functional}
v \mapsto \iint_{\mathcal{C}_{\Omega}}\frac{1}{p}\frac{|v(x)-v(y)|^{p}}{|x-y|^{n+sp}} + a(x,y)\frac{1}{q}\frac{|v(x)-v(y)|^{q}}{|x-y|^{n+tq}}\,dxdy,
\end{equation}
where
\begin{equation}\label{COmega}
\mathcal{C}_{\Omega} \coloneqq (\mathbb{R}^{n}\times\mathbb{R}^{n})\setminus((\mathbb{R}^{n}\setminus\Omega)\times(\mathbb{R}^{n}\setminus\Omega)).
\end{equation}

The local version corresponding to \eqref{model} is the double phase equation
\begin{equation}\label{localdp}
\mathrm{div}\left(|Du|^{p-2}Du+a(x)|Du|^{q-2}Du\right) = 0 \quad \textrm{in } \, \Omega.
\end{equation}
Starting from \cite{MR3360738,MR3294408}, the regularity for weak solutions to \eqref{localdp} and minimizers of corresponding variational integral
has been exhaustively studied, see \cite{BCM2,BO, MR3447716, MR3985927, DO, ok2} and references therein. In particular, for local boundedness and  H\"older continuity, it is shown that, when $1<p\le n$,
\begin{equation}\label{localdp.condition}
\begin{aligned}
a(\cdot) \in L^{\infty}_{\loc}(\Omega), \,\, q \leq p^{*}  \;\; &\Longrightarrow \;\; u \in L^{\infty}_{\mathrm{loc}}(\Omega),\\
u \in L^{\infty}_{\mathrm{loc}}(\Omega),\,\, a(\cdot) \in C^{0,\alpha}_{\loc}(\Omega), \,\, q \leq p + \alpha \;\; &\Longrightarrow \;\; u \in C^{0,\gamma}_{\mathrm{loc}}(\Omega),
\end{aligned}
\end{equation}
see \cite{BCM1,MR3360738,MR3366102}.

Nonlocal equations of double phase type were first treated by De Filippis and Palatucci \cite{MR3944281}, where they proved H\"older continuity for viscosity solutions. Scott and Mengesha \cite{scott2020self} proved nonlocal self-improving property for bounded weak solutions. We also mention the paper \cite{fang2021weak} by Fang and Zhang  concerning H\"older continuity for bounded weak solutions and a relationship between weak and viscosity solutions. Here, we point out that the above papers \cite{MR3944281,fang2021weak,scott2020self} are restricted to solutions bounded in $\mathbb{R}^{n}$ and are under the assumption that $t\le s$
, which means the second term in \eqref{model} is a lower order term. Therefore, they were able to consider bounded, possibly discontinuous modulating coefficient $a(\cdot,\cdot)$. 

 In this paper we prove the local boundedness and H\"older continuity for weak solutions to the nonlocal equation with double phase growth condition, \eqref{main.eq}. We emphasize that we deal with the case $s\le t$, 
which is a main difference from the papers \cite{MR3944281,fang2021weak,scott2020self}. The case $s\le t$ is more delicate than the other case, since the second term in \eqref{model.functional} has a higher order in the sense that $t \ge s$, $q \ge p$. 
To the best of our knowledge, the results presented in this paper are the first regularity results in this case. 
When we prove H\"older continuity in this case, we assume that the modulating coefficient $a(\cdot,\cdot)$ is  H\"older continuous, which together with a restriction of the range of $q$ allows us to replace $a(\cdot,\cdot)$ with a constant. Note that this  argument is exactly the same as the one for the local double phase problem. Therefore, we are able to make the assumptions of $q$ and $a(\cdot,\cdot)$ that are analogous to those in \eqref{localdp.condition}.
 Moreover, we only assume that the weak solution is not bounded in $\mathbb{R}^{n}$, but locally bounded in $\Omega$. Therefore, we need to handle the so-called nonlocal tails. The main difficulty arises in deriving the logarithmic type estimate (see Lemma~\ref{log.lemma}). For  fractional $p$-Laplacian type problems, an analogue estimate was obtained in \cite[Lemma 1.3]{DKP2}. However, we could not apply the same approach directly to our problem \eqref{main.eq}. In order to obtain such an estimate, we first assume that the weak solution is locally bounded, and then take advantage of the H\"older continuity of $a(\cdot,\cdot)$ in order to modify and develop the techniques used in the proof of \cite[Lemma 1.3]{DKP2}.

\subsection{Assumptions and main results}\label{setting}

We say that a 
function $f:\mathbb{R}^{n}\times\mathbb{R}^{n}\rightarrow \mathbb{R}$ is 
symmetric if $f(x,y)=f(y,x)$ for every $x,y \in \mathbb{R}^{n}$.

The kernels $K_{sp},K_{tq}:\mathbb{R}^{n}\times\mathbb{R}^{n}\rightarrow \mathbb{R}$ are measurable, symmetric 
and satisfy 
\begin{align}\label{kernel.growth}
\frac{\Lambda^{-1}}{|x-y|^{n+sp}} \leq K_{sp}(x,y) \leq \frac{\Lambda}{|x-y|^{n+sp}}, \qquad
\frac{\Lambda^{-1}}{|x-y|^{n+tq}} \leq K_{tq}(x,y) \leq \frac{\Lambda}{|x-y|^{n+tq}}
\end{align}
for a.e. $(x,y)\in\mathbb{R}^{n}\times\mathbb{R}^{n}$, where $\Lambda>1$ and
\begin{equation}\label{power}
1 < p \leq q<\infty, \qquad 0 < s \leq t < 1.
\end{equation} 
The modulating coefficient $a:\mathbb{R}^{n}\times\mathbb{R}^{n} \rightarrow \mathbb{R}$ is assumed to be nonnegative, measurable, symmetric and bounded:
\begin{equation}\label{a.bound}
0 \leq a(x,y) = a(y,x) \leq \lVert a \rVert_{L^{\infty}}, \qquad x,y \in \mathbb{R}^{n}.
\end{equation} 
In addition, in Theorem~\ref{thm.hol} and Section~\ref{Holder},  we  also assume that 
\begin{equation}\label{a.holder}
|a(x_{1},y_{1}) - a(x_{2},y_{2})| \leq [a]_{\alpha}(|x_{1}-x_{2}|+|y_{1}-y_{2}|)^{\alpha}, \quad  \alpha>0,
\end{equation}
for every $(x_{1},y_{1}),(x_{2},y_{2})\in\mathbb{R}^{n}\times\mathbb{R}^{n}$. 

With the relevant function spaces including $\mathcal{A}(\Omega)$ and $L^{q-1}_{sp}(\mathbb{R}^{n})$ to be introduced in the next section, we introduce weak solutions under consideration.  
We say that $u \in \mathcal{A}(\Omega)$ is a weak solution to \eqref{main.eq} if
\begin{equation}\label{weak.formulation}
\begin{aligned}
\iint_{\mathcal{C}_{\Omega}}&\left[|u(x)-u(y)|^{p-2}(u(x)-u(y))(\varphi(x)-\varphi(y))K_{sp}(x,y)\right. \\
&\left.\quad+a(x,y)|u(x)-u(y)|^{q-2}(u(x)-u(y))(\varphi(x)-\varphi(y))K_{tq}(x,y)\right]\,dxdy = 0
\end{aligned}
\end{equation}
for every $\varphi \in \mathcal{A}(\Omega)$ with $\varphi = 0$ a.e. in $\mathbb{R}^{n}\setminus\Omega$.  In addition, we say that $u \in \mathcal{A}(\Omega)$ is a weak subsolution (resp. supersolution) if \eqref{weak.formulation} with ``='' replaced by ``$\leq$ (resp. $\geq$)'' holds for every $\varphi \in \mathcal{A}(\Omega)$ satisfying $\varphi \geq 0$ a.e. in $\mathbb{R}^{n}$ and $\varphi = 0$ a.e. in $\mathbb{R}^{n}\setminus\Omega$. Existence and uniqueness of weak solutions to \eqref{main.eq} with a Dirichlet  boundary condition  will be  discussed in Section \ref{existence}.

Now we state our main results. The first one is the local boundedness of weak solutions. 
\begin{theorem}\label{thm.bdd}
Let $K_{sp},K_{tq},a:\mathbb{R}^{n}\times\mathbb{R}^{n}\rightarrow\mathbb{R}$ be symmetric and satisfy \eqref{kernel.growth}-\eqref{a.bound}.
If 
\begin{equation}\label{assumption.bdd}
\begin{cases}
 p\le q \leq  \frac{np}{n-sp} & \text{when }\ sp<n,\\  
p\le q< \infty & \text{when }\ sp\ge n,
\end{cases}
\end{equation} 
then every weak solution $u \in \mathcal{A}(\Omega)\cap L^{q-1}_{sp}(\mathbb{R}^{n})$ to \eqref{main.eq} is locally bounded in $\Omega$.
\end{theorem}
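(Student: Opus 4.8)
The plan is to run a De Giorgi-type iteration on super-level sets, adapted to the nonlocal double phase setting, following the scheme of \cite{DKP1,DKP2} but carrying the extra $(t,q)$-term and the nonlocal tail. Fix a ball $B_{r}=B_{r}(x_{0})\Subset\Omega$ and let $u$ be a weak subsolution (the argument for supersolutions being symmetric via $-u$, so that local boundedness from above and below together give the claim). For levels $k\ge0$ and radii $\rho$ I would test \eqref{weak.formulation} with $\varphi=(u-k)_{+}\eta^{q}$, where $\eta$ is a cutoff supported in $B_{\rho}$ — more precisely with a sequence $k_{j}=k(2-2^{-j})$, $\rho_{j}=\frac{r}{2}(1+2^{-j})$ and cutoffs $\eta_{j}$ between $B_{\rho_{j+1}}$ and $B_{\rho_{j}}$. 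The algebraic inequalities relating $|u(x)-u(y)|^{p-2}(u(x)-u(y))$ (and its $q$-analogue) to the tested function are the standard ones used for the fractional $p$-Laplacian; applying them to each of the two terms and using \eqref{kernel.growth}, \eqref{a.bound} produces a Caccioppoli-type estimate bounding a weighted Gagliardo seminorm of $(u-k_{j})_{+}\eta_{j}$ of order $(s,p)$, plus the analogous order-$(t,q)$ quantity with weight $a$, by local integral terms over $B_{\rho_{j}}$ together with a tail contribution.

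The key structural point is that since $s\le t$ and $p\le q$, the order-$(t,q)$ term is the "higher" one, so on a bounded ball it can be absorbed: on $B_{r}\Subset\Omega$, using $a\le\|a\|_{L^{\infty}}$ and comparing the Gagliardo kernels $|x-y|^{-n-tq}$ against $|x-y|^{-n-sp}$ for $|x-y|\lesssim r$, one controls everything through the order-$(s,p)$ energy (the $q$-term only helps, as it has a favorable sign and a bounded coefficient). Thus the double phase feature, at the level of mere boundedness, reduces to a perturbation of the fractional $p$-Laplacian estimate — this is exactly why the Sobolev exponent $p^{*}=\frac{np}{n-sp}$ governs the range of $q$ in \eqref{assumption.bdd}. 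Then I would apply the fractional Sobolev inequality (on $B_{\rho_{j}}$, or Poincaré–Sobolev for the truncated function) to convert the seminorm bound into a gain of integrability: $\fint_{B_{\rho_{j+1}}}(u-k_{j+1})_{+}^{p}$ is estimated by a superlinear power of $\fint_{B_{\rho_{j}}}(u-k_{j})_{+}^{p}$ times $2^{cj}$ times the measure density of the super-level set, plus a tail term that one feeds back using $u\in L^{q-1}_{sp}(\mathbb{R}^{n})$ and the monotonicity of the level sets. When $sp\ge n$ there is no dimensional restriction because the embedding is into $L^{\infty}$ (or every $L^{m}$), which is the second alternative in \eqref{assumption.bdd}.

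Setting $Y_{j}=\fint_{B_{\rho_{j}}}(u-k_{j})_{+}^{p}\,dx$, the iteration yields $Y_{j+1}\le C\,b^{j}\,Y_{j}^{1+\beta}$ for some $b>1$, $\beta>0$ depending on $n,s,p$, so by the standard fast-geometric-convergence lemma (De Giorgi / Giusti), $Y_{j}\to0$ provided $Y_{0}=\fint_{B_{r}}(u-k)_{+}^{p}$ is smaller than an explicit threshold, which is arranged by taking $k$ large enough in terms of $\|u\|_{L^{p}(B_{r})}$, $\tail$, $\|a\|_{L^{\infty}}$ and the data; this gives $\esssup_{B_{r/2}}u\le 2k<\infty$. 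The main obstacle I anticipate is bookkeeping the nonlocal tail through the iteration while keeping the $q$-term's contribution genuinely subordinate: one must check that the order-$(t,q)$ tail, with weight $a$ and exponent $q-1$, is indeed controlled by the hypothesis $u\in L^{q-1}_{sp}(\mathbb{R}^{n})$ and does not spoil the power $1+\beta$ in the recursion — this is where the precise definition of the tail space $L^{q-1}_{sp}(\mathbb{R}^{n})$ and the restriction $q\le p^{*}$ are used in tandem, and it is the step that departs from the purely $p$-Laplacian case.
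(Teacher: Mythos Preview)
The broad architecture you describe --- test with $w_+\eta^q$, derive a Caccioppoli estimate, feed it into a fractional Sobolev inequality, and run a De Giorgi iteration --- is exactly what the paper does. But the paragraph in which you explain why the $(t,q)$-term is harmless contains a real error, and the iteration variable you choose will not close the recursion.

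First, the claim that the $q$-term ``only helps, as it has a favorable sign'' is false. When you test \eqref{weak.formulation} with $(u-k)_+\eta^q$, the $q$-integral is not sign-definite: after the standard splitting it produces on the \emph{right-hand side} of the Caccioppoli estimate a term of the form
\[
\int_{B_\rho}\int_{B_\rho} a(x,y)\,\frac{|\eta(x)-\eta(y)|^{q}\,w_+^{q}}{|x-y|^{n+tq}}\,dxdy
\;\sim\; \|a\|_{L^\infty}\,\frac{1}{(\sigma-\rho)^{q}}\,\rho^{(1-t)q}\int_{B_\sigma} w_+^{q}\,dx,
\]
which involves $\int w_+^{q}$, not $\int w_+^{p}$. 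Your kernel comparison also points the wrong way: since $tq\ge sp$, for $|x-y|\le 1$ one has $|x-y|^{-n-tq}\ge |x-y|^{-n-sp}$, so the $(t,q)$-Gagliardo form \emph{dominates} the $(s,p)$-one, not the reverse. In short, the $q$-term cannot be ``absorbed'' on the right.

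Second, because of this, iterating on $Y_j=\fint(u-k_j)_+^{p}$ alone does not yield $Y_{j+1}\le C b^{j}Y_j^{1+\beta}$: the right-hand side carries an uncontrolled $\fint w_j^{q}$. The paper resolves this by iterating on the composite quantity
\[
y_i=\int_{A^{+}(k_i,\sigma_i)} H_0\big((u-k_i)_+\big)\,dx,
\qquad H_0(\tau)=\tau^{p}+\|a\|_{L^\infty}\tau^{q},
\]
and by invoking a Sobolev--Poincar\'e inequality (Lemma~\ref{ineq1}) that bounds $\fint|w|^{q}/r^{tq}$ by $\big(\fint\int|w(x)-w(y)|^{p}/|x-y|^{n+sp}\big)^{q/p}$ plus lower-order terms; this is precisely where the restriction $q\le p^*_s$ enters. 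The resulting recursion is not purely of the form $y_{i+1}\le C b^{i}y_i^{1+\beta}$ but rather a sum of several powers $y_i^{q/p}$, $y_i^{1+sp/n}$, $y_i^{p}$; one then uses $y_0\le 1$ (guaranteed by choosing $k_0$ large, since $H_0(u)\in L^1_{\mathrm{loc}}$ under \eqref{assumption.bdd}) to reduce to a single exponent $1+\beta$ and conclude via Lemma~\ref{iterlem}. Your proposal is correct once you carry $H_0(w_+)$ through the iteration instead of $w_+^{p}$.
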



The second one is the local H\"older continuity. Here, we assume that $a(\cdot,\cdot)$ is H\"older continuous in $\R^n\times \R^n$ and that $u$ is locally bounded in $\Omega$.

\begin{theorem}\label{thm.hol}
Let $K_{sp},K_{tq},a:\mathbb{R}^{n}\times\mathbb{R}^{n}\rightarrow\mathbb{R}$ be symmetric and satisfy \eqref{kernel.growth}-\eqref{a.bound}. 
If $a(\cdot,\cdot)$ satisfies \eqref{a.holder} and 
\begin{equation}\label{assumption.hol}
\quad tq \leq sp + \alpha,
\end{equation}
then every weak solution $u \in \mathcal{A}(\Omega)\cap L^{q-1}_{sp}(\mathbb{R}^{n})$ to \eqref{main.eq} which is locally bounded in $\Omega$ is locally H\"older continuous in $\Omega$. More precisely, for every open subset $\Omega'\Subset\Omega$, there exists $\gamma\in (0,1)$ depending only on $n,s,t,p,q,\Lambda,\lVert a \Vert_{L^{\infty}},\alpha,[a]_{\alpha}$ and $\lVert u \rVert_{L^{\infty}(\Omega')}$ such that $u \in C^{0,\gamma}_{\mathrm{loc}}(\Omega')$.
\end{theorem}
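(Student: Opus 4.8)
The plan is to follow the classical De Giorgi–Nash–Moser scheme as adapted to the nonlocal fractional $p$-Laplacian setting in \cite{DKP2}, with the crucial new ingredient being the use of \eqref{assumption.hol} to ``freeze'' the coefficient $a(\cdot,\cdot)$. Concretely, fix $\Omega' \Subset \Omega$ and a ball $B_{2r}(x_0) \Subset \Omega$ with $r$ small. On such a ball the H\"older bound \eqref{a.holder} gives $|a(x,y) - a(x_0,x_0)| \le c\, r^\alpha$ for $x,y \in B_{2r}(x_0)$, so we may write $a(x,y) = \bar a + (a(x,y)-\bar a)$ with $\bar a := a(x_0,x_0)$, and the error term contributes a factor $r^\alpha$ that, thanks to $tq \le sp + \alpha$, can be absorbed into the scaling of the leading $(s,p)$-term (the natural scaling being $r^{-sp}$, against which $r^{tq-\alpha} \le r^{sp}$). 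This is precisely the mechanism behind \eqref{localdp.condition} in the local case, and it reduces matters, up to controllable perturbations, to an equation with a \emph{constant} coefficient $\bar a \ge 0$ and two homogeneous fractional kernels.

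The first substantive step is the Caccioppoli-type inequality: testing \eqref{weak.formulation} with $\varphi = (u-k)_\pm \eta^q$ for a cutoff $\eta$ supported in $B_{2r}$ and a level $k$, one obtains an estimate controlling the $(s,p)$- and $(t,q)$-Gagliardo seminorms of the truncations $(u-k)_\pm$ on $B_r$ by the corresponding integrals of $(u-k)_\pm$ itself plus nonlocal tail terms $\tail((u-k)_\pm; x_0, r)$; here local boundedness of $u$ (which is assumed) and the bound $u \in L^{q-1}_{sp}(\R^n)$ make all tails finite. The second step upgrades this, via the fractional Sobolev–Poincar\'e inequality and a standard iteration over shrinking balls and increasing levels (a De Giorgi iteration lemma), to a local boundedness estimate on dyadic subballs together with an oscillation decay once we have the key logarithmic estimate. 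The third step is the logarithmic estimate itself: testing with $\varphi = \eta^q (u - k + d)^{1-p}_+$-type functions (suitably regularized) yields, for a weak supersolution bounded below in a ball, an estimate of the form $\iint |\log \tfrac{u(x)-k+d}{u(y)-k+d}|^p K_{sp}\,\eta^p \le c\, r^{n-sp}(1 + \text{tail})$; this is the analogue of \cite[Lemma 1.3]{DKP2}. From the Caccioppoli and logarithmic estimates one runs the two half-balls alternative (``De Giorgi isoperimetric'' / measure-shrinking lemma) to get a fixed fractional reduction of oscillation $\osc_{B_{r/2}} u \le (1-\sigma)\osc_{B_r} u + c\,r^\beta$ for some $\sigma \in (0,1)$ and $\beta>0$ coming from the tails and the coefficient error; iterating this geometric decay over a chain of shrinking balls yields $u \in C^{0,\gamma}_{\loc}(\Omega')$ with $\gamma$ depending on the stated quantities, in particular on $\|u\|_{L^\infty(\Omega')}$ through the constants in the frozen-coefficient reduction.

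The main obstacle, as the authors themselves flag, is the logarithmic estimate (Lemma~\ref{log.lemma}). In the pure fractional $p$-Laplacian case the test function $(u-k+d)^{1-p}_+$ interacts cleanly with the $p$-homogeneous nonlinearity, but here the $(t,q)$-term with $q \ge p$ does not scale compatibly: the natural test function for the $p$-part produces, in the $q$-part, a term whose integrand is not sign-definite and whose size must be controlled. This is exactly where local boundedness of $u$ and H\"older continuity of $a$ are used: boundedness lets one bound $(u-k+d)$ above and below on the relevant region, so the $q$-term can be estimated by the $p$-term times a factor that, by \eqref{assumption.hol}, is no worse than the leading scaling; and the H\"older continuity lets the frozen-coefficient reduction go through with a power-of-$r$ error. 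One must be careful to regularize the singular test function (replace $(u-k+d)^{1-p}_+$ by $((u-k+d)\wedge M)^{1-p}_+ - M^{1-p}$ or similar), to verify admissibility of the test function in $\mathcal{A}(\Omega)$, and to keep the tail terms — which now involve $\tail$ in both the $(s,p)$ and $(t,q)$ scales — under control using $u \in L^{q-1}_{sp}(\R^n)$. Everything else (Caccioppoli, Sobolev embedding, De Giorgi iteration, the final chaining of oscillation decay) is the nonlocal analogue of standard machinery and should proceed along the lines of \cite{DKP1,DKP2,coz,ok2021local}.
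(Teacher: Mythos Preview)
Your overall architecture (Caccioppoli $\to$ logarithmic estimate $\to$ density estimate $\to$ De Giorgi iteration $\to$ oscillation decay) matches the paper, and the ``freezing'' intuition $a(x,y)=a_2+O(\rho^\alpha)$ is indeed what drives the local estimates. However, there is a concrete gap in the logarithmic estimate, precisely at the point you yourself flag as the main obstacle.

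You propose to test with $\eta^q(u+d)^{1-p}$, obtain the $|\log|^p$-estimate from the $p$-part, and control the $(t,q)$-contribution as a perturbation using boundedness of $u$ and H\"older continuity of $a$. This does not close. Write the $(t,q)$-contribution to the local integral $I_1$ as a ``good'' piece (which does have the right sign) plus a cutoff error. After Young's inequality the cutoff error is
\[
\int_{B_{2\rho}}\int_{B_{2\rho}} a(x,y)\,\bar u^{\,q-p}\,\frac{|\eta(x)-\eta(y)|^{q}}{|x-y|^{n+tq}}\,dxdy \;\lesssim\; a_2\,\widetilde M\,\rho^{\,n-tq},
\]
and the tail piece over $B_R\setminus B_{2\rho}$ produces the same scale $a_2\,\widetilde M\,\rho^{\,n-tq}$. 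After the fractional Sobolev--Poincar\'e step (which carries the factor $\rho^{\,sp-n}$), this leaves $a_2\,\widetilde M\,\rho^{\,sp-tq}$, and since $sp\le tq$ this blows up as $\rho\to 0$ whenever $\inf_{B_{2\rho}\times B_{2\rho}}a>0$. Freezing only kills the oscillation $a_2-a_1=O(\rho^\alpha)$; it does nothing to the constant part $a_1$, so the error cannot be absorbed into the $p$-scaling in the non-degenerate phase. In short, the test function $(u+d)^{1-p}$ is adapted to a single homogeneity and cannot balance both kernels simultaneously.

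The paper's way out is to change the test function to one that is intrinsically double-phase: one tests with $\phi^q/g(u+d)$, where
\[
g(\tau)=\frac{\tau^{p-1}}{\rho^{sp}}+a_2\,\frac{\tau^{q-1}}{\rho^{tq}},
\]
so that the $p$- and $q$-pieces of the operator each see a denominator of their own order. This yields a first-power estimate on $|\log(\bar u(x)/\bar u(y))|$ (rather than $|\log|^p$) with right-hand side of order $\rho^n$, uniformly in both phases. For the nonlocal tail, where $y\notin B_{2\rho}$ and one cannot freeze $a$ at scale $\rho$, the paper uses the interpolation
\[
a(x,y)\le a_2 + \big(a(x,y)-a(x,x)\big)^{\frac{tq-sp}{\alpha}}(2\|a\|_{L^\infty})^{1-\frac{tq-sp}{\alpha}} \le a_2 + c\,|x-y|^{\,tq-sp},
\]
valid by \eqref{a.holder} and \eqref{assumption.hol}; the second piece converts $|x-y|^{-tq}$ into $|x-y|^{-sp}$, and the $a_2$ piece is exactly what the $g$-denominator absorbs. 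These two ingredients---the test function $\phi^q/g(\bar u)$ and the bound $a(x,y)\le a_2+c|x-y|^{tq-sp}$---are the missing ideas in your sketch; once they are in place, the rest of your outline goes through as in Section~\ref{Holder}.
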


\begin{remark}\label{rmk0}
In view of Theorem~\ref{thm.bdd}, we also see that, under the setting in Theorem~\ref{thm.hol}, if
\[
\begin{cases}
 p\le q \leq \min\left\{ \frac{np}{n-sp}, \frac{sp+\alpha}{t} \right\}& \text{when }\ sp<n,\\  
p\le q\le  \frac{sp+\alpha}{t} =  \frac{n+\alpha}{t} & \text{when }\ sp = n,
\end{cases}
\]
then every weak solution $u \in \mathcal{A}(\Omega)\cap L^{q-1}_{sp}(\mathbb{R}^{n})$ to \eqref{main.eq} is locally H\"older continuous.
\end{remark}



This paper is organized as follows: In the next section, we introduce basic notation and function spaces which will be used throughout this paper. In Section \ref{existence}, we examine the existence of solutions to \eqref{main.eq}. In Section \ref{Caccio.bounded} we derive a Caccioppoli type estimate and prove Theorem \ref{thm.bdd}. Finally, in Section \ref{Holder}, we prove  Theorem \ref{thm.hol} by obtaining a logarithmic estimate.
\section{Preliminaries}\label{preliminaries}
\subsection{Notation}
We denote by $c$ a generic constant greater than or equal to one, whose value may vary from line to line. We denote its specific dependence in parentheses when needed, using the abbreviation like
\begin{equation*}
\begin{cases}
\data \coloneqq (n,s,t,p,q,\Lambda,\lVert a \rVert_{L^{\infty}}) \\
\data_{1} \coloneqq (n,s,t,p,q,\Lambda, \lVert a \rVert_{L^{\infty}}, \alpha, [a]_{\alpha}),
\end{cases}
\end{equation*}
where $\|a\|_{L^{\infty}}$, $\alpha$ and $[a]_{\alpha}$ are given in \eqref{a.bound} and \eqref{a.holder}.

For any open set $\mathcal{O}\subseteq \mathbb{R}^{n}$, $s\in(0,1)$ and $p\geq 1$, the fractional Sobolev space $W^{s,p}(\mathcal{O})$ is the set of all functions $v \in L^{p}(\mathcal{O})$ for which 
\begin{align*}
\lVert v \rVert_{W^{s,p}(\mathcal{O})} & \coloneqq \lVert v \rVert_{L^{p}(\mathcal{O})} + [v]_{s,p;\mathcal{O}} \\
& \coloneqq \left(\int_{\mathcal{O}}|v|^{p}\,dx\right)^{\frac{1}{p}} + \left(\int_{\mathcal{O}}\int_{\mathcal{O}}\frac{|v(x)-v(y)|^{p}}{|x-y|^{n+sp}}\,dxdy\right)^{\frac{1}{p}} < \infty.
\end{align*}
Furthermore, we define $W^{s,p}_{0}(\mathcal{O})$ as the closure of $C^{\infty}_{0}(\mathcal{O})$ in $W^{s,p}(\mathcal{O})$.
We denote the ($s$-)fractional Sobolev conjugate of $p$ by 
\begin{equation*}
p_{s}^{*} \coloneqq 
\begin{cases}
np/(n-sp) & \text{when }\ sp < n, \\
\textrm{any number in }(p,\infty) & \text{when }\ sp \ge n.
\end{cases}
\end{equation*} 
In particular, if we consider two exponents $p$ and $q$ with $1<p<q$, we set $p^*_s=q+1$ when $sp=n$.

As usual, $B_{r}(x_{0})$ is the open ball in $\mathbb{R}^{n}$ with center $x_{0} \in \mathbb{R}^{n}$ and radius $r>0$. We omit the center when it is clear in the context. 
For a 
measurable function $v$, we write $v_{\pm} \coloneqq \max\{\pm v,0\}$. If $v$ is integrable over a measurable set $S$ with $0<|S|<\infty$, we denote its integral average over $S$ by
\begin{equation*}
(v)_{S} \coloneqq \mean{S}v\,dx \coloneqq \frac{1}{|S|}\int_{S}v\,dx.
\end{equation*}

We always assume that $s$, $t$, $p$, and $q$ satisfy \eqref{power} and that $K_{sp},K_{tq},a:\R^n\times \R^n \rightarrow \mathbb{R}$ satisfy \eqref{kernel.growth} and \eqref{a.bound}. 
We denote
\begin{equation}\label{def.H}
H(x,y,\tau)\coloneqq\frac{\tau^{p}}{|x-y|^{sp}}+ a(x,y) \frac{\tau^{q}}{|x-y|^{tq}}, 
\quad x,y\in\R^n \ \text{ and }\ \tau\geq 0 ,
\end{equation}  
and
\begin{equation}\label{modular}
\varrho(v;S) \coloneqq \int_{S}\int_{S}H(x,y,|v(x)-v(y)|)\,\frac{dxdy}{|x-y|^{n}}
\end{equation}
for each measurable set $S\subseteq \mathbb{R}^{n}$ and $v:S\rightarrow \mathbb{R}$. 
Then we define a function space
concerned with weak solutions to \eqref{main.eq} by
\begin{equation*}
\mathcal{A}(\Omega) \coloneqq \left\{v:\mathbb{R}^{n}\rightarrow\mathbb{R}\ \Big|\ v |_{\Omega}\in L^p(\Omega)\ \text{ and } \  \iint_{\mathcal{C}_{\Omega}}H(x,y,|v(x)-v(y)|)\,\frac{dxdy}{|x-y|^{n}} < \infty \right\},
\end{equation*}
where $\mathcal C_\Omega$ is defined in \eqref{COmega}.
Note that $\varrho(v;\Omega)<\infty$ whenever $v \in \mathcal{A}(\Omega)$, which in particular implies
\begin{equation*}
\mathcal{A}(\Omega) \subset W^{s,p}(\Omega).
\end{equation*}
We note that if $sp > n$, then every function in $W^{s,p}(\Omega)$ is H\"older continuous by the fractional Sobolev embedding. Thus, in this paper we may assume without loss of generality that 
$$
sp \leq n.
$$
Moreover, again by the fractional Sobolev embedding, we have 
\begin{equation*}
\mathcal{A}(\Omega) \subset L^{q}(\Omega)
\quad\text{if }\ \begin{cases}
 p< q \leq  \frac{np}{n-sp} & \text{when }\ sp<n,\\  
p<q< \infty & \text{when }\ sp\ge n.
\end{cases}
\end{equation*}
This 
will be used later in the proof of several estimates concerning local boundedness.

We next define the tail space.
One of the 
features in \cite{DKP2} is to consider the notion of nonlocal tails in local estimates, which encodes the nonlocal nature of the problem.
We define 
\begin{align*}
L^{q-1}_{sp}(\mathbb{R}^{n}) \coloneqq \left\{v: \mathbb{R}^{n}\rightarrow \mathbb{R} \, \Big| \, \int_{\mathbb{R}^{n}}\frac{|v(x)|^{q-1}}{(1+|x|)^{n+sp}}\,dx < \infty  \right\}.
\end{align*}
Let $m \in \{s,t\}$ and $\ell \in \{p,q\}$. Since we have
\begin{align*}
\frac{1+|x|}{|x-x_{0}|} \leq \frac{1+|x-x_{0}|+|x_{0}|}{|x-x_{0}|} \leq 1+ \frac{1+|x_{0}|}{r},
\qquad
\frac{|v(x)|^{\ell-1}}{(1+|x|)^{n+m\ell}} \le \frac{|v(x)|^{q-1}+1}{(1+|x|)^{n+sp}}
\end{align*}
for $x \in \mathbb{R}^{n}\setminus B_{r}(x_{0})$,
we see that
\begin{equation*} 
\int_{\mathbb{R}^{n}\setminus B_{r}(x_{0})}\frac{|v(x)|^{\ell-1}}{|x-x_{0}|^{n+m\ell}}\,dx
\end{equation*}
is finite whenever $v \in L^{q-1}_{sp}(\mathbb{R}^{n})$ and $B_{r}(x_{0}) \subset \mathbb{R}^{n}$. 
We call such a quantity a nonlocal tail.

\begin{remark} 
If $v\in L^{q_{0}}(\mathbb{R}^{n})$ for some $q_{0}\geq q-1$, or if $v \in L^{q-1}(B_{R}(0))\cap L^{\infty}(\mathbb{R}^{n}\setminus B_{R}(0))$ for some $R>0$, then $v \in L^{q-1}_{sp}(\mathbb{R}^{n})$. Moreover, we have that
\[
 W^{s,p}(\R^n) \subset L^{q-1}_{sp}(\R^n) \quad \text{if }\ q\le p^*+1.
\]
\end{remark}

\subsection{Inequalities}
We first collect several inequalities 
concerning fractional Sobolev functions.
For basic properties of fractional Sobolev spaces, we refer to \cite{DPV}.

The following inequality shows an inclusion relation between fractional Sobolev spaces. We also notice that it fails to hold when $s=t$.
\begin{lemma}\label{inclusion}
Let $1 \leq p \leq q$ 
and $0<s<t<1$. Let $\Omega \subset \mathbb{R}^{n}$ be a bounded measurable set. Then, for any $v \in W^{t,q}(\Omega)$ we have
\begin{equation*}
\left(\int_{\Omega}\int_{\Omega}\frac{|v(x)-v(y)|^{p}}{|x-y|^{n+sp}}\,dxdy\right)^{\frac{1}{p}} \leq c|\Omega|^{\frac{q-p}{pq}}(\diam(\Omega))^{t-s}\left(\int_{\Omega}\int_{\Omega}\frac{|v(x)-v(y)|^q}{|x-y|^{n+tq}}\,dxdy\right)^{\frac{1}{q}}
\end{equation*}
for a constant $c\equiv c(n,s,t,p,q)$.
\end{lemma}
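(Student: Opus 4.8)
The plan is to reduce everything to Hölder's inequality applied cleverly to the double integral, using the gap $t-s>0$ to absorb the extra fractional weight and the gap $q-p\ge 0$ to pass from the $L^q$-type norm to the $L^p$-type norm. First I would write the left-hand side as
\[
\int_{\Omega}\int_{\Omega}\frac{|v(x)-v(y)|^{p}}{|x-y|^{n+sp}}\,dxdy = \int_{\Omega}\int_{\Omega}\left(\frac{|v(x)-v(y)|^{q}}{|x-y|^{n+tq}}\right)^{\frac{p}{q}}\cdot\frac{|x-y|^{\frac{p}{q}(n+tq)}}{|x-y|^{n+sp}}\,dxdy,
\]
and observe that the leftover power of $|x-y|$ in the exponent is $\frac{p}{q}(n+tq)-(n+sp) = -n\frac{q-p}{q}+p(t-s)$, so that $|x-y|$ appears to the power $-n\frac{q-p}{q}+p(t-s)$, which is $\ge -n\frac{q-p}{q}$ since $t\ge s$, and in any case bounded above on $\Omega\times\Omega$ by $(\diam\Omega)^{p(t-s)}$ times $|x-y|^{-n(q-p)/q}$.

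Next I would apply Hölder's inequality with exponents $\frac{q}{p}$ and $\frac{q}{q-p}$ (interpreting the second exponent as $\infty$ and the argument trivially when $q=p$, in which case the statement is immediate). The first factor produces exactly
\[
\left(\int_{\Omega}\int_{\Omega}\frac{|v(x)-v(y)|^{q}}{|x-y|^{n+tq}}\,dxdy\right)^{\frac{p}{q}},
\]
and the second factor is
\[
\left(\int_{\Omega}\int_{\Omega}\left(\frac{|x-y|^{\frac{p}{q}(n+tq)-(n+sp)}}{1}\right)^{\frac{q}{q-p}}dxdy\right)^{\frac{q-p}{q}} \le (\diam\Omega)^{p(t-s)}\left(\int_{\Omega}\int_{\Omega}|x-y|^{-n}\,dxdy\right)^{\frac{q-p}{q}}.
\]
The remaining point is to bound $\int_{\Omega}\int_{\Omega}|x-y|^{-n}\,dxdy$: this diverges, so one cannot simply integrate $|x-y|^{-n}$. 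This is the main obstacle, and the fix is that one must not discard the small-scale decay too crudely — instead of bounding the whole $|x-y|$-power by $(\diam\Omega)^{p(t-s)}|x-y|^{-n(q-p)/q}$, keep a sliver of it: write the exponent as $-n\frac{q-p}{q}+\varepsilon+(p(t-s)-\varepsilon)$ for small $\varepsilon>0$, bounding $|x-y|^{p(t-s)-\varepsilon}$ by $(\diam\Omega)^{p(t-s)-\varepsilon}$ and keeping $|x-y|^{-n(q-p)/q+\varepsilon}$ inside the integral. Then the second Hölder factor becomes $\bigl(\int_\Omega\int_\Omega |x-y|^{-n+\varepsilon q/(q-p)}\,dxdy\bigr)^{(q-p)/q}$, which is finite: after fixing $x$, the inner integral over $y\in\Omega$ of $|x-y|^{-n+\varepsilon q/(q-p)}$ is bounded by a dimensional constant times $(\diam\Omega)^{\varepsilon q/(q-p)}$, and integrating over $x\in\Omega$ gives an extra factor $|\Omega|$.

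Finally I would collect the powers of $\diam\Omega$ and $|\Omega|$. Choosing $\varepsilon$ as a fixed fraction of $p(t-s)$, the $\diam\Omega$ exponent totals $p(t-s)$ (the piece $p(t-s)-\varepsilon$ from outside plus $\varepsilon$ from the $y$-integral), while the $|\Omega|$ contribution is $|\Omega|^{(q-p)/q}$, matching the stated $|\Omega|^{(q-p)/(pq)}$ after the outer $p$-th root. Taking $p$-th roots throughout turns $(\diam\Omega)^{p(t-s)}$ into $(\diam\Omega)^{t-s}$ and the $L^q$-double-integral-to-the-$p/q$ into the $L^q$-double-integral-to-the-$1/q$, giving precisely the claimed inequality with $c$ depending only on $n,s,t,p,q$ (through the dimensional constant in the $y$-integral and the choice of $\varepsilon$). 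I would also remark explicitly, as the statement asks, that the argument breaks down at $s=t$ because then $p(t-s)=0$ leaves no room to split off the sliver $|x-y|^{-n+\varepsilon q/(q-p)}$ while still absorbing a positive power of $\diam\Omega$ — indeed the inclusion $W^{t,q}\subset W^{s,p}$ genuinely fails when $s=t<1$ and $q>p$ on bounded domains.
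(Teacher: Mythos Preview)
Your argument is correct and is essentially the standard H\"older proof behind the reference the paper cites for the case $p<q$ (Cozzi, Lemma~4.6); the paper itself only quotes that result and gives the one-line $p=q$ computation you also mention. Your detour through the $\varepsilon$-splitting is unnecessary, though: applying H\"older directly to your factorisation, the second factor is $\bigl(\int_\Omega\int_\Omega|x-y|^{-n+p(t-s)q/(q-p)}\,dxdy\bigr)^{(q-p)/q}$, which is already finite since $p(t-s)q/(q-p)>0$, and evaluating it yields exactly the explicit constant $c=\bigl(\tfrac{n(q-p)}{(t-s)pq}|B_1|\bigr)^{(q-p)/(pq)}$ recorded in the paper.
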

\begin{proof}
In the case $p<q$, this is a special case of \cite[Lemma 4.6]{coz}. In particular, the constant $c$ is given by
\begin{equation*}
c = \left(\frac{n(q-p)}{(t-s)pq}|B_{1}|\right)^{\frac{q-p}{pq}},
\end{equation*}
which blows up as $t \searrow s$. 

If $p=q$, then we directly have
\begin{align*}
\left(\int_{\Omega}\int_{\Omega}\frac{|v(x)-v(y)|^{p}}{|x-y|^{n+sp}}\,dxdy\right)^{\frac{1}{p}} & = \left(\int_{\Omega}\int_{\Omega}\frac{|v(x)-v(y)|^{p}}{|x-y|^{n+tp}}|x-y|^{(t-s)p}\,dxdy\right)^{\frac{1}{p}} \\
& \leq (\diam(\Omega))^{t-s}\left(\int_{\Omega}\int_{\Omega}\frac{|v(x)-v(y)|^{p}}{|x-y|^{n+tp}}\,dxdy\right)^{\frac{1}{p}}.
\end{align*}
\end{proof}

We next recall the following fractional Sobolev-Poincar\'e type inequalities from \cite[Lemma 2.5]{ok2021local}. In fact, the second one follows from the first one and Lemma \ref{inclusion}.
\begin{lemma}\label{SP}
Let $s\in(0,1)$, $p\geq 1$ be such that $sp \leq n$. For any $v \in W^{s,p}(B_{r})$ we have
\begin{equation*}
\left(\mean{B_{r}}|v-(v)_{B_{r}}|^{p_{s}^{*}}\,dx\right)^{\frac{p}{p_{s}^{*}}} \leq cr^{sp}\mean{B_{r}}\int_{B_{r}}\frac{|v(x)-v(y)|^{p}}{|x-y|^{n+sp}}\,dydx
\end{equation*}
for $c\equiv c(n,s,p)>0$. Moreover, if $s \le t<1$ and $p \le q$, we also have
\begin{equation*}
\left(\mean{B_{r}}|v-(v)_{B_{r}}|^{p_{s}^{*}}\,dx\right)^{\frac{q}{p_{s}^{*}}} \leq cr^{tq}\mean{B_{r}}\int_{B_{r}}\frac{|v(x)-v(y)|^{q}}{|x-y|^{n+tq}}\,dydx
\end{equation*}
for $c \equiv c(n,s,t,p,q)>0$, whenever the right-hand side is finite.
\end{lemma}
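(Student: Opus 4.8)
The plan is to prove the second inequality of Lemma~\ref{SP} by combining the first inequality with the inclusion Lemma~\ref{inclusion}, exactly as the statement hints. Let me think carefully about how the exponents have to line up.

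Since we may assume $sp\le n$ throughout, $p_s^*$ is either $np/(n-sp)$ (when $sp<n$) or an arbitrary exponent in $(p,\infty)$ (when $sp=n$), so the first inequality of Lemma~\ref{SP} is available with the pair $(s,p)$. The idea is to feed the conclusion of that first inequality into Lemma~\ref{inclusion} applied on the ball $B_r$ in order to trade the $p$-seminorm for a power of the $q$-seminorm, and then to keep careful track of the powers of $r$ and of $|B_r|$ that appear, converting full double integrals back into their averages at the end. It is convenient to treat the cases $p<q$ and $p=q$ separately, since Lemma~\ref{inclusion} produces an extra measure factor $|\Omega|^{(q-p)/(pq)}$ only in the former.

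Assume first $p<q$ (and, correspondingly, $s<t$, so that Lemma~\ref{inclusion} applies). By the first inequality of Lemma~\ref{SP},
\[
\Big(\mean{B_r}|v-(v)_{B_r}|^{p_s^*}\,dx\Big)^{\frac{p}{p_s^*}}\le c\,r^{sp}\,\mean{B_r}\int_{B_r}\frac{|v(x)-v(y)|^p}{|x-y|^{n+sp}}\,dydx=\frac{c\,r^{sp}}{|B_r|}\,[v]_{s,p;B_r}^p .
\]
Lemma~\ref{inclusion} with $\Omega=B_r$ gives $[v]_{s,p;B_r}^p\le c\,|B_r|^{\frac{q-p}{q}}(2r)^{(t-s)p}\big(\int_{B_r}\int_{B_r}|v(x)-v(y)|^q|x-y|^{-n-tq}\,dxdy\big)^{p/q}$. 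Inserting this and using $r^{sp}r^{(t-s)p}=r^{tp}$ together with $|B_r|^{-1+\frac{q-p}{q}}=|B_r|^{-p/q}\eqs r^{-np/q}$, one obtains a bound of the form $c\,r^{tp-np/q}\big(\int_{B_r}\int_{B_r}|v(x)-v(y)|^q|x-y|^{-n-tq}\,dxdy\big)^{p/q}$ for the left-hand side raised to the power $p/p_s^*$. Raising everything to the power $q/p$ turns $r^{tp-np/q}$ into $r^{tq-n}$ and removes the outer exponent $p/q$ from the double integral; finally writing $\int_{B_r}\int_{B_r}=|B_r|\,\mean{B_r}\int_{B_r}\eqs r^n\,\mean{B_r}\int_{B_r}$ absorbs the factor $r^{-n}$ and produces exactly $c\,r^{tq}\mean{B_r}\int_{B_r}|v(x)-v(y)|^q|x-y|^{-n-tq}\,dydx$, as claimed. (That the right-hand side is finite guarantees $v\in W^{t,q}(B_r)$, so the use of Lemma~\ref{inclusion} is legitimate.)

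When $p=q$ the argument is shorter: one uses instead the elementary case of Lemma~\ref{inclusion}, namely $[v]_{s,p;B_r}^p\le(2r)^{(t-s)p}[v]_{t,p;B_r}^p$, which carries no measure factor, so after dividing by $|B_r|$ and reorganizing the powers $r^{sp}(2r)^{(t-s)p}=c\,r^{tp}$ one arrives directly at the desired estimate without having to raise to any power. The remaining borderline situations ($s=t$ with $p<q$, or $tq>n$) are handled by the same mechanism but applying the first inequality of Lemma~\ref{SP}, or a fractional Morrey inequality when $tq>n$, with the pair $(t,q)$ and then using Jensen's inequality on the probability space $(B_r,\mean{B_r}\,dx)$ together with $p_s^*\le q_t^*$ (which follows from $n(q-p)\ge pq(s-t)$); I will indicate this briefly. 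The only genuinely delicate point in the whole proof is the scaling bookkeeping — correctly matching $r^{sp}$, $r^{(t-s)p}$, the measure factor $|B_r|^{(q-p)/q}$ and the passage $\int\!\int\leftrightarrow\mean{}\int$ so that the final power of $r$ is precisely $tq$; everything else is mechanical.
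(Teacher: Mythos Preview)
Your proof is correct and follows exactly the route the paper indicates: the paper does not give a detailed argument for Lemma~\ref{SP} but simply states that the first inequality is taken from \cite[Lemma~2.5]{ok2021local} and that ``the second one follows from the first one and Lemma~\ref{inclusion}.'' Your scaling bookkeeping in the main case $s<t$ is accurate, and your treatment of the borderline case $s=t$, $p<q$ (via the first inequality applied to the pair $(t,q)$ together with Jensen and $p_s^*\le q_t^*$, or Morrey when $tq>n$) is a legitimate and necessary supplement, since Lemma~\ref{inclusion} explicitly requires $s<t$; in this respect you are being slightly more careful than the paper's one-line justification.
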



The following two lemmas are simple corollaries of the preceding lemma. They will be used in the proof of Theorems \ref{thm.bdd} and \ref{thm.hol}, respectively.

\begin{lemma}\label{ineq1}
Assume that the constants $s$, $t$, $p$ and $q$ satisfy \eqref{power} and \eqref{assumption.bdd}. Then for every $f \in W^{s,p}(B_{r})$ we have
\begin{align*}
\mean{B_{r}}\left|\frac{f}{r^{s}}\right|^{p} +L_0\left|\frac{f}{r^{t}}\right|^{q}\,dx 
&\leq c L_0 r^{(s-t)q}\left(\mean{B_{r}}\int_{B_{r}}\frac{|f(x)-f(y)|^{p}}{|x-y|^{n+sp}}\,dxdy\right)^{\frac{q}{p}} \\
& \quad + c\left(\frac{|\supp\, f|}{|B_{r}|}\right)^{\frac{sp}{n}}\mean{B_{r}}\int_{B_{r}}\frac{|f(x)-f(y)|^{p}}{|x-y|^{n+sp}}\,dxdy \\
&\quad + c\left(\frac{|\supp\, f|}{|B_{r}|}\right)^{p-1}\mean{B_{r}}\left|\frac{f}{r^{s}}\right|^{p} +L_0\left|\frac{f}{r^{t}}\right|^{q}\,dx
\end{align*}
for a constant $c\equiv c(n,s,t,p,q)$, where $L_0$ is any positive constant.
\end{lemma}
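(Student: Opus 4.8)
The plan is to bound the left-hand side by applying the fractional Sobolev--Poincar\'e inequalities from Lemma~\ref{SP} to $f$ on $B_r$ together with the elementary observation that $f$ vanishes outside $\supp f$, so that its mean can be controlled by a power of $|\supp f|/|B_r|$. First I would treat the $p$-term. Write $f = (f - (f)_{B_r}) + (f)_{B_r}$; then $\mean{B_r}|f/r^s|^p\,dx \le c r^{-sp}\mean{B_r}|f-(f)_{B_r}|^p\,dx + c r^{-sp}|(f)_{B_r}|^p$. The first piece is handled by H\"older's inequality (raising to exponent $p_s^*/p \ge 1$ and using that $f$, hence $f-(f)_{B_r}$ effectively, is supported in a set whose measure relative to $|B_r|$ produces the factor $(|\supp f|/|B_r|)^{sp/n}$ after noting $1 - p/p_s^* = sp/n$ when $sp<n$, and choosing the free conjugate exponent appropriately when $sp=n$), followed by the first inequality in Lemma~\ref{SP} to replace the Lebesgue average by $r^{sp}$ times the Gagliardo seminorm average. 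The mean term $|(f)_{B_r}|^p$ is bounded, again by Jensen/H\"older on the support, by $(|\supp f|/|B_r|)^{p-1}\mean{B_r}|f|^p\,dx$, which after dividing by $r^{sp}$ gives exactly the third term on the right-hand side (the $L_0$ part of that term is nonnegative, so adding it only weakens the inequality).

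Next I would treat the $L_0\,\mean{B_r}|f/r^t|^q\,dx$ term analogously. Again split $f$ into its oscillation and its mean. For the oscillation part I would apply H\"older with exponent $p_s^*/q$ — here one must check $q \le p_s^*$, which is precisely the content of \eqref{assumption.bdd} (when $sp<n$ this is $q \le np/(n-sp) = p_s^*$; when $sp=n$ one uses the convention $p_s^* = q+1 > q$ fixed earlier in the paper) — picking up a factor $(|\supp f|/|B_r|)^{1 - q/p_s^*}$, and then invoke the \emph{second} inequality in Lemma~\ref{SP}, which bounds $(\mean{B_r}|f-(f)_{B_r}|^{p_s^*}\,dx)^{q/p_s^*}$ by $c r^{tq}\mean{B_r}\int_{B_r}|f(x)-f(y)|^q|x-y|^{-n-tq}\,dxdy$. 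However, the right-hand side of the lemma involves only the $p$-seminorm, so I would then apply Lemma~\ref{inclusion} on $B_r$ to convert the $q$-seminorm into the $p$-seminorm at the cost of a factor $c\,|B_r|^{(q-p)/(pq)}(\diam B_r)^{t-s}$; combining the powers of $r$ ($r^{tq} \cdot r^{-tq}$ from the normalization, $r^{(t-s)q}$ from Lemma~\ref{inclusion}, etc.) yields the prefactor $c L_0 r^{(s-t)q}$ of the first term on the right-hand side. The support factor $(|\supp f|/|B_r|)^{1-q/p_s^*}$ should be discarded by the trivial bound $|\supp f| \le |B_r|$, since the exponent is nonnegative. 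Finally the mean part $L_0 r^{-tq}|(f)_{B_r}|^q$ is bounded by $c L_0 (|\supp f|/|B_r|)^{q-1}\mean{B_r}|f/r^t|^q\,dx$ using Jensen on the support, which is absorbed into the third term on the right-hand side.

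The main point requiring care — and the only genuine obstacle — is the bookkeeping of exponents and scaling, particularly: (i) verifying that $q \le p_s^*$ under \eqref{assumption.bdd} so that the second Sobolev--Poincar\'e inequality is applicable, and correctly handling the borderline case $sp = n$ via the convention $p_s^* = q+1$; (ii) confirming that the right-hand side of the second inequality in Lemma~\ref{SP} is finite so that it may legitimately be used — this holds because $f \in W^{s,p}(B_r)$ together with Lemma~\ref{inclusion} shows the $q$-seminorm is finite whenever $s<t$, while the case $s=t$ of \eqref{assumption.bdd} forces $q \le np/(n-sp)$ and one argues directly; and (iii) tracking the powers of $r$ through the normalizations $f/r^s$, $f/r^t$ and the two applications of Lemma~\ref{SP} and Lemma~\ref{inclusion} so that the exponent $(s-t)q$ in the first right-hand term comes out correctly. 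Everything else is a routine application of H\"older's inequality exploiting that $f$ is supported on $\supp f$.
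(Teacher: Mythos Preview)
Your treatment of the $q$-term contains a genuine error. You propose to apply the second inequality of Lemma~\ref{SP}, which bounds $(\mean{B_r}|f-(f)_{B_r}|^{p_s^*}\,dx)^{q/p_s^*}$ by $cr^{tq}$ times the $(t,q)$-Gagliardo seminorm, and then to invoke Lemma~\ref{inclusion} to ``convert the $q$-seminorm into the $p$-seminorm''. But Lemma~\ref{inclusion} goes the other way: it bounds the \emph{lower}-order seminorm $[\,\cdot\,]_{s,p}$ by the \emph{higher}-order seminorm $[\,\cdot\,]_{t,q}$, encoding the embedding $W^{t,q}(B_r)\hookrightarrow W^{s,p}(B_r)$. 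There is no inequality of the form $[\,\cdot\,]_{t,q}\le c[\,\cdot\,]_{s,p}$ available, and indeed such a reverse embedding is false in general. The same misreading appears in your point~(ii), where you claim that $f\in W^{s,p}(B_r)$ together with Lemma~\ref{inclusion} forces the $(t,q)$-seminorm to be finite; this does not follow.

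The fix is simpler than your detour and is exactly what the paper does: bypass the second inequality of Lemma~\ref{SP} entirely. After H\"older (using $q\le p_s^*$ from \eqref{assumption.bdd}) to pass from $L^q$ to $L^{p_s^*}$, apply the \emph{first} inequality of Lemma~\ref{SP} and raise both sides to the power $q/p$:
\[
\left(\mean{B_r}\left|\frac{f-(f)_{B_r}}{r^t}\right|^{p_s^*}\,dx\right)^{q/p_s^*}
\le c\,r^{-tq}\left(r^{sp}\mean{B_r}\int_{B_r}\frac{|f(x)-f(y)|^p}{|x-y|^{n+sp}}\,dxdy\right)^{q/p},
\]
which produces the factor $r^{(s-t)q}$ directly in front of the $(s,p)$-seminorm raised to $q/p$, with no need for Lemma~\ref{inclusion}. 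Your handling of the $p$-term and of the mean terms is essentially correct and matches the paper (the ``effectively'' is a slight hand-wave---$f-(f)_{B_r}$ is not supported on $\supp f$---but the paper sidesteps this by applying H\"older to $|f|^p$ \emph{before} subtracting the mean, and your order can also be repaired with one extra line).
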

\begin{proof}
Applying H\"older's inequality and Lemma \ref{SP}, we have
\begin{align*}
\mean{B_{r}}\left|\frac{f}{r^{t}}\right|^{q}\,dx & \leq c\left(\mean{B_{r}}\left|\frac{f-(f)_{B_{r}}}{r^{t}}\right|^{p_{s}^{*}}\,dx\right)^{\frac{q}{p_{s}^{*}}} + c\left|\frac{(f)_{B_{r}}}{r^{t}}\right|^{q} \\
& \leq cr^{(s-t)q}\left(\mean{B_{r}}\int_{B_{r}}\frac{|f(x)-f(y)|^{p}}{|x-y|^{n+sp}}\,dxdy\right)^{\frac{q}{p}} + c\left|\frac{(f)_{B_{r}}}{r^{t}}\right|^{q}.
\end{align*}
Likewise, we obtain
\begin{align*}
\mean{B_{r}}\left|\frac{f}{r^{s}}\right|^{p}\,dx & \leq c\left(\frac{|\supp\, f|}{|B_{r}|}\right)^{\frac{sp}{n}}\left(\mean{B_{r}}\left|\frac{f-(f)_{B_{r}}}{r^{s}}\right|^{p_{s}^{*}}\,dx\right)^{\frac{p}{p_{s}^{*}}} + c\left|\frac{(f)_{B_{r}}}{r^{s}}\right|^{p} \\
& \leq c\left(\frac{|\supp\, f|}{|B_{r}|}\right)^{\frac{sp}{n}}\mean{B_{r}}\int_{B_{r}}\frac{|f(x)-f(y)|^{p}}{|x-y|^{n+sp}}\,dxdy + c\left|\frac{(f)_{B_{r}}}{r^{s}}\right|^{p}.
\end{align*}
We also have
\begin{align*}
\lefteqn{ \left|\frac{(f)_{B_{r}}}{r^{s}}\right|^{p} + L_0\left|\frac{(f)_{B_{r}}}{r^{t}}\right|^{q} } \\
& \leq r^{-sp}\left(\frac{|\supp\, f|}{|B_{r}|}\right)^{p-1}\mean{B_{r}}|f|^{p}\,dx + L_0r^{-tq}\left(\frac{|\supp\, f|}{|B_{r}|}\right)^{q-1}\mean{B_{r}}|f|^{q}\,dx \\
& \leq \left(\frac{|\supp\, f|}{|B_{r}|}\right)^{p-1}\mean{B_{r}}\left|\frac{f}{r^{s}}\right|^{p}+L_0\left|\frac{f}{r^{t}}\right|^{q}\,dx.
\end{align*}
We combine the inequalities in the above display to complete the proof.
\end{proof}

\begin{lemma}\label{ineq2}
Assume that the constants $s,t,p$ and $q$ satisfy \eqref{power} and that the function $a(\cdot,\cdot)$ satisfies \eqref{a.holder} and \eqref{assumption.hol}. Let $B_{r} \subseteq B_{R}$ be concentric balls with $ \frac{1}{2} R \leq r \leq R \leq 1$. 
Then for any $f \in L^{\infty}(B_{r})$ we have
\begin{align*}
\left[\mean{B_{r}}\left(\left|\frac{f}{r^{s}}\right|^{p}+a_2\left|\frac{f}{r^{t}}\right|^{q}\right)^{\kappa}\,dx\right]^{\frac{1}{\kappa}}
&\leq c\left(1+\lVert f \rVert_{L^{\infty}(B_{r})}^{q-p}\right)\mean{B_{r}}\int_{B_{r}}
H(x,y,|f(x)-f(y)|)\frac{dxdy}{|x-y|^{n}}\\
& \quad + c\left(1+\lVert f \rVert_{L^{\infty}(B_{r})}^{q-p}\right)\mean{B_{r}}\left|\frac{f}{r^{s}}\right|^{p} + a_{1}\left|\frac{f}{r^{t}}\right|^{q}\,dx
\end{align*}
for some $c \equiv c(n,s,t,p,q,[a]_{\alpha})>0$, whenever the right-hand side is finite, where 
\begin{equation*}
\kappa \coloneqq \min\left\{\frac{p_{s}^{*}}{p},\frac{q_{t}^{*}}{q}\right\}>1, \quad
a_{1} \coloneqq \inf_{B_{R}\times B_{R}}a(\cdot,\cdot)
\quad\text{and}\quad
a_{2} \coloneqq \sup_{B_{R}\times B_{R}}a(\cdot,\cdot).
\end{equation*}  
\end{lemma}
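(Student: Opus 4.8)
The plan is to reduce the mixed-modular quantity on the left-hand side to a sum of two ``pure'' pieces, one governed by the fractional Sobolev--Poincar\'e inequality for the $(s,p)$-scaling and one for the $(t,q)$-scaling, and then to bound each piece by the corresponding Gagliardo seminorm appearing in $H(x,y,\cdot)$. Concretely, since $\kappa = \min\{p_s^*/p, q_t^*/q\}$ we have $p\kappa \le p_s^*$ and $q\kappa \le q_t^*$, so by H\"older's inequality on $B_r$ (whose measure is comparable to $r^n \le 1$) it suffices to estimate
\[
\left(\mean{B_r}\left|\frac{f}{r^s}\right|^{p_s^*}dx\right)^{\frac{p}{p_s^*}}
\quad\text{and}\quad
a_2\left(\mean{B_r}\left|\frac{f}{r^t}\right|^{q_t^*}dx\right)^{\frac{q}{q_t^*}}.
\]
First I would split $f = (f-(f)_{B_r}) + (f)_{B_r}$ in each of these. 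The oscillation parts are handled by Lemma~\ref{SP}: the first gives $c\,r^{sp}\mean{B_r}\int_{B_r}|f(x)-f(y)|^p|x-y|^{-n-sp}\,dxdy$ divided by $r^{sp}$, i.e.\ exactly the $p$-part of the modular; for the second I apply the second inequality in Lemma~\ref{SP} (with $s\le t$, $p\le q$) to get $c\,r^{tq}\mean{B_r}\int_{B_r}|f(x)-f(y)|^q|x-y|^{-n-tq}\,dxdy$ divided by $r^{tq}$, which is the integrand of $a_2$ times the $q$-Gagliardo term. The constant-part contributions are controlled by the average term $\mean{B_r}(|f/r^s|^p + a_1|f/r^t|^q)\,dx$ on the right (using $|(f)_{B_r}|^p \le \mean{B_r}|f|^p$ and similarly for $q$), modulo the coefficient bookkeeping described next.

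The coefficient bookkeeping is where assumption~\eqref{a.holder} together with \eqref{assumption.hol} enters, and this is the heart of the lemma. The issue is that the $q$-Gagliardo term produced above carries no factor of $a$, whereas $H(x,y,\tau)$ carries $a(x,y)$ in front of its $q$-part, and the right-hand side offers only $a_1 = \inf_{B_R\times B_R} a$ in the lower-order average term and $a_2 = \sup_{B_R\times B_R} a$ on the left. I would write, for $(x,y)\in B_r\times B_r \subseteq B_R\times B_R$,
\[
a_2 \le a_1 + \osc_{B_R\times B_R} a \le a_1 + c\,[a]_\alpha R^\alpha,
\]
using \eqref{a.holder} and $\diam(B_R)\le 2R$. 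Hence $a_2 r^{tq}\mean{B_r}\int_{B_r}|f(x)-f(y)|^q|x-y|^{-n-tq} \le a_1(\cdots) + c[a]_\alpha R^\alpha r^{tq}(\cdots)$. The term with $a_1$ is absorbed since $a_1 \le a(x,y)$ pointwise on $B_r\times B_r$, giving a piece of $\mean{B_r}\int_{B_r} H(x,y,|f(x)-f(y)|)\,\frac{dxdy}{|x-y|^n}$ after dividing by $r^{tq}$. For the remaining term, the key estimate is $R^\alpha r^{tq-tq} = R^\alpha$ combined with $tq \le sp + \alpha$, i.e.\ $R^{\alpha} \le R^{tq-sp} \cdot R^{sp+\alpha-tq} \le R^{tq-sp}$ since $R\le 1$ and $sp+\alpha-tq\ge 0$; comparing $r\simeq R$, the factor $r^{tq}/r^{tq}$ times $R^\alpha$ reduces the $(t,q)$-Gagliardo term to a constant multiple of the $(s,p)$-Gagliardo term scaled by $r^{sp}$ — this is precisely the scaling that Lemma~\ref{SP}'s first inequality controls. (Alternatively and more cleanly: bound the coefficient-free $q$-term by Lemma~\ref{inclusion}-type reasoning is not available since $s$ may equal $t$, so one must route it through the $L^\infty$ bound, see below.)

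The factor $(1+\|f\|_{L^\infty(B_r)}^{q-p})$ appears because, whenever I need to compare a $q$-power quantity against a $p$-power quantity, I use $|f(x)-f(y)|^q \le (2\|f\|_{L^\infty(B_r)})^{q-p}|f(x)-f(y)|^p$ and $|f/r^t|^q \le (\|f\|_{L^\infty}/r^t)^{q-p}|f/r^s|^p \cdot r^{(q-p)\cdot?}$ — here I must be careful that $r\le 1$ so $r^{-t}\le r^{-s}$ only if $t\ge s$, which holds, but the exact powers of $r$ must be tracked so that no negative power of $r$ survives; since $tq\le sp+\alpha$ and all balls have radius $\le 1$, every leftover power of $r$ is nonnegative and can be discarded. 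So the structure of the proof is: (i) H\"older to pass from $\kappa$-power to $p_s^*,q_t^*$ powers; (ii) subtract the mean and apply Lemma~\ref{SP} twice; (iii) use \eqref{a.holder} to replace $a_2$ by $a_1$ plus an oscillation term; (iv) absorb the $a_1$-term into the $H$-modular and the average term; (v) use \eqref{assumption.hol} and $R\le 1$ to dominate the residual coefficient-free $q$-Gagliardo term by the $p$-Gagliardo term, paying the price $(1+\|f\|_{L^\infty}^{q-p})$; (vi) collect constants. I expect step (v) — correctly matching the powers of $r$ and $R$ so that $tq\le sp+\alpha$ is exactly what makes the residual term subordinate, while simultaneously keeping the $L^\infty$-dependence only in the stated form $(1+\|f\|_{L^\infty}^{q-p})$ — to be the main obstacle; everything else is a bookkeeping exercise on top of Lemma~\ref{SP}.
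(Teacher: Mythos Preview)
Your step (v) has a genuine gap. After Sobolev--Poincar\'e you are left with the residual term
\[
cR^\alpha\mean{B_r}\int_{B_r}\frac{|f(x)-f(y)|^q}{|x-y|^{n+tq}}\,dxdy,
\]
and you propose to dominate it by the $(s,p)$-Gagliardo term using $|f(x)-f(y)|^q\le(2\|f\|_{L^\infty})^{q-p}|f(x)-f(y)|^p$. But this leaves the kernel $|x-y|^{-n-tq}$, which is strictly more singular than $|x-y|^{-n-sp}$ since $tq\ge sp$; no prefactor of the form $R^\alpha$ or $r^{tq-sp}$ can compensate, because the integrand $|f(x)-f(y)|^p/|x-y|^{n+tq}$ is not controlled by the $(s,p)$-seminorm for a merely bounded $f$. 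Concretely, if $a_1=0$ the right-hand side of the lemma carries no coefficient-free $(t,q)$-Gagliardo information whatsoever, so there is nothing available to absorb this residual.

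The remedy---and this is exactly what the paper does---is to reverse the order of your steps (ii) and (iii): perform the $a_2\to a_1$ conversion \emph{before} Sobolev--Poincar\'e, at the level of the pointwise quantity $|f/r^s|^p+a_2|f/r^t|^q$. There the estimate $a_2\le a_1+c[a]_\alpha R^\alpha$ combined with $|f|^q\le\|f\|_{L^\infty}^{q-p}|f|^p$ yields
\[
a_2\left|\frac{f}{r^t}\right|^q \;\le\; a_1\left|\frac{f}{r^t}\right|^q + c\,r^{\alpha+sp-tq}\,\|f\|_{L^\infty}^{q-p}\left|\frac{f}{r^s}\right|^p,
\]
and now the exponent $\alpha+sp-tq\ge 0$ with $r\le R\le 1$ kills the residual cleanly: the scaling factors $r^{-sp}$ and $r^{-tq}$ are fixed constants here, not singular kernels in $|x-y|$. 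After this reduction one applies Lemma~\ref{SP} to the quantity $|f/r^s|^p+a_1|f/r^t|^q$; the resulting $(t,q)$-Gagliardo term already carries the coefficient $a_1\le a(x,y)$ on $B_r\times B_r$, so it is absorbed directly into the $H$-modular with no residual left to handle.
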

\begin{proof}
Using the assumptions, we estimate 
\begin{align*}
\lefteqn{ \left[\mean{B_{r}}\left(\left|\frac{f}{r^{s}}\right|^{p}+a_2\left|\frac{f}{r^{t}}\right|^{q}\right)^{\kappa}\,dx\right]^{\frac{1}{\kappa}} } \\
& \leq c\left[\mean{B_{r}}\left(\left|\frac{f}{r^{s}}\right|^{p} + a_{1}\left|\frac{f}{r^{t}}\right|^{q}\right)^{\kappa} + \left(r^{\alpha+sp-tq}\lVert f \rVert_{L^{\infty}(B_{r})}^{q-p}\left|\frac{f}{r^{s}}\right|^{p}\right)^{\kappa}\,dx \right]^{\frac{1}{\kappa}} \\
& \leq c\left(1+\lVert f \rVert_{L^{\infty}(B_{r})}^{q-p}\right)\left[\mean{B_{r}}\left(\left|\frac{f}{r^{s}}\right|^{p}+a_{1}\left|\frac{f}{r^{t}}\right|^{q}\right)^{\kappa}\,dx\right]^{\frac{1}{\kappa}}.
\end{align*}
We next apply Lemma \ref{SP} to see that
\begin{align*}
\lefteqn{\left[\mean{B_{r}}\left(\left|\frac{f}{r^{s}}\right|^{p}+a_{1}\left|\frac{f}{r^{t}}\right|^{q}\right)^{\kappa}\,dx\right]^{\frac{1}{\kappa}}}\\
& \leq c\left[\mean{B_{r}}\left(\left|\frac{f-(f)_{B_{r}}}{r^{s}}\right|^{p}+a_{1}\left|\frac{f-(f)_{B_{r}}}{r^{t}}\right|^{q}\right)^{\kappa}\,dx\right]^{\frac{1}{\kappa}} + c\left|\frac{(f)_{B_{r}}}{r^{s}}\right|^{p} + ca_{1}\left|\frac{(f)_{B_{r}}}{r^{t}}\right|^{q} \\
& \leq c\mean{B_{r}}\int_{B_{r}}\frac{|f(x)-f(y)|^{p}}{|x-y|^{n+sp}}+a_{1}\frac{|f(x)-f(y)|^{q}}{|x-y|^{n+tq}}\,dxdy + c\mean{B_{r}}\left|\frac{f}{r^{s}}\right|^{p} + a_{1}\left|\frac{f}{r^{t}}\right|^{q}\,dx \\
&\leq c\mean{B_{r}}\int_{B_{r}}
H(x,y,|f(x)-f(y)|)\frac{dxdy}{|x-y|^{n}}
+c\mean{B_{r}}\left|\frac{f}{r^{s}}\right|^{p} + a_{1}\left|\frac{f}{r^{t}}\right|^{q}\,dx.
\end{align*}
Then the conclusion follows.
\end{proof}

The following numerical inequalities will be frequently used in this paper.

\begin{lemma}\label{size.comp}
Let $p\ge 1$ and $a,b \geq 0$. Then 
\begin{equation*}
a^{p} - b^{p} \leq pa^{p-1}|a-b|
\end{equation*}
and, for any $\varepsilon > 0$,
\begin{equation*}
a^{p} - b^{p} \leq \varepsilon b^{p} + c\varepsilon^{1-p}|a-b|^{p}
\end{equation*}
for some $c \equiv c(p) > 0$.
\end{lemma}
\begin{proof}
The first one is a direct consequence of 
Mean Value Theorem; note that we may assume $a \geq b$, otherwise it is obvious. For the proof of the second one, see \cite[Lemma 3.1]{DKP2}.
\end{proof}

We end this section with a standard iteration lemma from \cite[Lemma 7.1]{MR1962933}.
\begin{lemma}\label{iterlem} 
Let $\{y_{i}\}_{i=0}^{\infty}$ be a sequence of nonnegative numbers satisfying
\begin{equation*}
y_{i+1} \leq b_{1}b_{2}^{i}y_{i}^{1+\beta}, \qquad i=0,1,2,...,
\end{equation*}
for some constants $b_{1},\beta>0$ and $b_{2}>1$. If
\begin{equation*}
y_{0} \leq b_{1}^{-1/\beta}b_{2}^{-1/\beta^{2}},
\end{equation*}
then $y_{i} \rightarrow 0$ as $i\rightarrow\infty$.
\end{lemma}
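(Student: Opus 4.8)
The plan is to prove, by induction on $i$, the quantitative geometric decay estimate
\begin{equation*}
y_{i} \leq y_{0}\,b_{2}^{-i/\beta}, \qquad i=0,1,2,\dots,
\end{equation*}
after which the desired conclusion is immediate: since $b_{2}>1$ we have $b_{2}^{-i/\beta}\to 0$ as $i\to\infty$, hence $y_{i}\to 0$. The only step that requires a moment's thought is guessing the correct rate $b_{2}^{-1/\beta}$; everything else is bookkeeping with exponents. We may clearly assume $y_{0}>0$, otherwise the recursion forces $y_{i}=0$ for all $i$ and there is nothing to prove.

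For the base case $i=0$ the claimed inequality is the trivial identity $y_{0}\leq y_{0}$. For the inductive step, suppose $y_{i}\leq y_{0}b_{2}^{-i/\beta}$. Plugging this into the hypothesis and using that $i-i(1+\beta)/\beta=-i/\beta$, I would compute
\begin{equation*}
y_{i+1} \leq b_{1}b_{2}^{i}y_{i}^{1+\beta} \leq b_{1}b_{2}^{i}\bigl(y_{0}b_{2}^{-i/\beta}\bigr)^{1+\beta} = b_{1}y_{0}^{1+\beta}\,b_{2}^{\,i-i(1+\beta)/\beta} = b_{1}y_{0}^{1+\beta}\,b_{2}^{-i/\beta}.
\end{equation*}
To close the induction I need the right-hand side to be at most $y_{0}b_{2}^{-(i+1)/\beta}$, which, after dividing by the positive quantity $y_{0}b_{2}^{-i/\beta}$, is equivalent to $b_{1}y_{0}^{\beta}\leq b_{2}^{-1/\beta}$, i.e.\ to $y_{0}\leq b_{1}^{-1/\beta}b_{2}^{-1/\beta^{2}}$. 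This is precisely the smallness assumption in the statement, so the induction is complete and $y_{i}\leq y_{0}b_{2}^{-i/\beta}$ for every $i$.

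Letting $i\to\infty$ and invoking $b_{2}>1$ then yields $y_{i}\to 0$, as claimed. I do not expect any genuine obstacle here: the result is completely classical (it is stated as \cite[Lemma 7.1]{MR1962933}) and the argument is a one-line induction. The only thing one should be careful about is the exponent arithmetic, so that the threshold on $y_{0}$ comes out exactly as $b_{1}^{-1/\beta}b_{2}^{-1/\beta^{2}}$ with no wasted slack; presenting the reduction $b_{1}y_{0}^{1+\beta}b_{2}^{-i/\beta}\leq y_{0}b_{2}^{-(i+1)/\beta}\iff b_{1}y_{0}^{\beta}\leq b_{2}^{-1/\beta}$ explicitly makes this transparent.
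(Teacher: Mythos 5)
Your proof is correct: the induction hypothesis $y_{i}\leq y_{0}b_{2}^{-i/\beta}$ closes exactly under the stated smallness condition on $y_{0}$, and the exponent arithmetic checks out. The paper does not reprove this lemma but cites \cite[Lemma 7.1]{MR1962933}, whose proof is precisely this standard induction, so your argument matches the intended one.
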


\section{Existence of weak solutions}\label{existence}
In this section we show the existence of weak solutions to \eqref{main.eq}. By a standard argument, such as the one in the proof of \cite[Theorem 2.3]{DKP2}, we see that $u \in \mathcal{A}(\Omega)$ is a weak solution to \eqref{main.eq}
if and only if it is a minimizer of the functional
\begin{equation}\label{functional}
\mathcal{E}(v;\Omega) \coloneqq \iint_{\mathcal{C}_{\Omega}}\frac{1}{p}|v(x)-v(y)|^{p}K_{sp}(x,y) + a(x,y)\frac{1}{q}|v(x)-v(y)|^{q}K_{tq}(x,y)\,dxdy.
\end{equation}
We say that $u \in \mathcal{A}(\Omega)$ is a minimizer of \eqref{functional} if
\begin{equation*}
\mathcal{E}(u;\Omega) \leq \mathcal{E}(v;\Omega)
\end{equation*}
for every $v \in \mathcal{A}(\Omega)$ with $v=u$ a.e. in $\mathbb{R}^{n}\setminus\Omega$. Therefore, we prove the existence and uniqueness of the minimizer of \eqref{functional} with a Dirichlet boundary condition. 
\begin{theorem}
Let $\Omega$ be a bounded domain and $g \in \mathcal{A}(\Omega)$ be a given boundary data. Let $K_{sp},K_{tq},a:\mathbb{R}^{n}\times\mathbb{R}^{n}\rightarrow\mathbb{R}$ be symmetric and satisfy \eqref{kernel.growth}-\eqref{a.bound}. Then there exists a unique minimizer $u \in \mathcal{A}(\Omega)$ of \eqref{functional} with $u=g$ a.e. in $\mathbb{R}^{n}\setminus\Omega$. Moreover, if $g \in \mathcal{A}(\Omega) \cap L^{q-1}_{sp}(\mathbb{R}^{n})$, then $u \in \mathcal{A}(\Omega) \cap L^{q-1}_{sp}(\mathbb{R}^{n})$.
\end{theorem}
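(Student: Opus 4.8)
The plan is to follow the classical direct method of the calculus of variations adapted to the fractional double phase setting. First I would fix the affine class $\mathcal{K}_g \coloneqq \{v \in \mathcal{A}(\Omega) : v = g \text{ a.e. in } \mathbb{R}^n \setminus \Omega\}$, which is nonempty since $g \in \mathcal{K}_g$, so that $\inf_{v \in \mathcal{K}_g} \mathcal{E}(v;\Omega) < \infty$; note also $\mathcal{E} \geq 0$ by \eqref{kernel.growth} and \eqref{a.bound}, so the infimum is a finite nonnegative number $\mathfrak{m}$. Take a minimizing sequence $(u_j) \subset \mathcal{K}_g$ with $\mathcal{E}(u_j;\Omega) \to \mathfrak{m}$. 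Using the lower bound in \eqref{kernel.growth}, $\mathcal{E}(u_j;\Omega)$ controls $\iint_{\mathcal{C}_\Omega} H(x,y,|u_j(x)-u_j(y)|)\,\frac{dxdy}{|x-y|^n}$, hence in particular $[u_j]_{s,p;\Omega}$ and, via the Dirichlet condition $u_j - g = 0$ outside $\Omega$ together with a fractional Poincaré inequality on a large ball containing $\Omega$, also $\|u_j - g\|_{L^p(\Omega)}$; thus $(u_j - g)$ is bounded in $W^{s,p}_0$ of a large ball, which embeds compactly into $L^p$. Passing to a subsequence, $u_j \rightharpoonup u$ weakly in the relevant reflexive space, $u_j \to u$ strongly in $L^p(\Omega)$ and a.e., and $u = g$ a.e. outside $\Omega$, so $u \in \mathcal{K}_g$.

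Next I would establish lower semicontinuity of $\mathcal{E}(\cdot;\Omega)$ along this sequence. The integrand $(x,y) \mapsto \frac{1}{p}|\xi|^p K_{sp}(x,y) + a(x,y)\frac{1}{q}|\xi|^q K_{tq}(x,y)$ is convex in $\xi = v(x) - v(y)$ and nonnegative; combined with a.e. convergence of $u_j(x) - u_j(y) \to u(x) - u(y)$ on $\mathcal{C}_\Omega$ (from a.e. convergence of $u_j$, after a further subsequence), Fatou's lemma directly gives $\mathcal{E}(u;\Omega) \leq \liminf_j \mathcal{E}(u_j;\Omega) = \mathfrak{m}$. Since $u \in \mathcal{K}_g$, also $\mathcal{E}(u;\Omega) \geq \mathfrak{m}$, so $u$ is a minimizer, and in particular $\mathcal{E}(u;\Omega) = \mathfrak{m} < \infty$ forces $u \in \mathcal{A}(\Omega)$. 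Uniqueness follows from strict convexity: the maps $\tau \mapsto \tau^p$ and $\tau \mapsto \tau^q$ are strictly convex for $p, q > 1$, so if $u_1, u_2$ are two minimizers with the same boundary data, testing $\mathcal{E}$ on the midpoint $(u_1+u_2)/2 \in \mathcal{K}_g$ and using strict convexity of $\xi \mapsto |\xi|^p$ (resp. $|\xi|^q$) on the $p$-term yields $\mathcal{E}\big(\tfrac{u_1+u_2}{2};\Omega\big) < \tfrac12\mathcal{E}(u_1;\Omega) + \tfrac12\mathcal{E}(u_2;\Omega) = \mathfrak{m}$ unless $u_1(x) - u_1(y) = u_2(x) - u_2(y)$ for a.e. $(x,y) \in \mathcal{C}_\Omega$; since $u_1 = u_2 = g$ outside $\Omega$, this forces $u_1 = u_2$ a.e.

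For the final assertion, suppose additionally $g \in L^{q-1}_{sp}(\mathbb{R}^n)$. Since $u = g$ a.e. in $\mathbb{R}^n \setminus \Omega$ and $\Omega$ is bounded, write $\int_{\mathbb{R}^n} \frac{|u(x)|^{q-1}}{(1+|x|)^{n+sp}}\,dx = \int_{\Omega} + \int_{\mathbb{R}^n \setminus \Omega}$; the second integral equals $\int_{\mathbb{R}^n \setminus \Omega}\frac{|g(x)|^{q-1}}{(1+|x|)^{n+sp}}\,dx < \infty$, while for the first, on the bounded set $\Omega$ the weight $(1+|x|)^{-(n+sp)}$ is bounded below, so it is comparable to $\int_\Omega |u|^{q-1}\,dx$, which is finite because $u \in \mathcal{A}(\Omega) \subset W^{s,p}(\Omega) \subset L^{q-1}(\Omega)$ (using $q - 1 < q \le p^* + 1$, or more elementarily $L^p(\Omega) \subset L^{q-1}(\Omega)$ when $q - 1 \le p$, together with the Sobolev embedding in the remaining range as recorded after \eqref{modular}). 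Hence $u \in L^{q-1}_{sp}(\mathbb{R}^n)$.

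The main obstacle is the compactness and lower semicontinuity step in the genuinely nonlocal, non-reflexive-looking setting: one must be careful that the energy bound really does pin down $u_j$ in a space with a compact embedding. The clean way is to note that the Dirichlet condition lets one work in $\widetilde{W}^{s,p}(\Omega) \coloneqq \{v : v = g \text{ outside } \Omega,\ v|_\Omega \in W^{s,p}(\Omega)\}$ localized to a fixed large ball $B \supset \Omega$, where $v - g \in W^{s,p}_0(B)$; here the energy controls $[v-g]_{s,p;B}$ up to a tail contribution from $g$ that is finite and fixed, and then the compact embedding $W^{s,p}_0(B) \hookrightarrow\hookrightarrow L^p(B)$ applies. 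Everything else is routine once this framework is set up.
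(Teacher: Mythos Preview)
Your proposal is correct and follows essentially the same approach as the paper: the direct method with a minimizing sequence, boundedness of $(u_j - g)$ in $W^{s,p}_0$ of a large ball via the energy bound and a fractional Poincar\'e-type argument, compact embedding into $L^p$ to extract an a.e.-convergent subsequence, Fatou's lemma for lower semicontinuity, and strict convexity of $\tau \mapsto \tau^p + a(x,y)\tau^q$ for uniqueness. The final tail-space assertion is likewise handled the same way, by splitting the defining integral into $\Omega$ (where $u \in L^p(\Omega)$ suffices) and $\mathbb{R}^n \setminus \Omega$ (where $u = g$).
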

\begin{proof}
The uniqueness follows directly from the fact that the function $\tau \mapsto \tau^{p} + a(x,y) \tau^{q}$ is  strictly convex 
for each fixed $(x,y)$. Now we prove the existence. The admissible set 
$$\mathcal{A}_g(\Omega) \coloneqq \{v\in\mathcal{A}(\Omega):v=g \text{ a.e. in }\R^n\setminus \Omega\}$$ 
is obviously nonempty, as $g \in \mathcal{A}_g(\Omega)$. 
Let $\{u_{k}\}_{k} \subset \mathcal{A}_g(\Omega)$ be a minimizing sequence. Then there exists a constant $C$ such that 
\begin{equation*}
[u_{k}]_{s,p;\Omega}^{p} = \int_{\Omega}\int_{\Omega}\frac{|u_{k}(x)-u_{k}(y)|^{p}}{|x-y|^{n+sp}}\,dxdy \leq \mathcal{E}(u_{k};\Omega) \leq C \qquad \forall\, k \in \mathbb{N}.
\end{equation*}
In particular, Lemma~\ref{inclusion}
implies that $\{[u_{k}]_{s_{0},p;\Omega}\}_{k}$ is bounded for any $s_{0} \in (0,s)$. 
Then we choose a ball $B_{R}\equiv B_{R}(x_{0}) \supset \Omega$ with $R \geq 1$ and fix $s_{0} \in (0,s/2)$ with $np/(n+s_{0}p) \eqqcolon p_{0} > 1$. Since $u_{k}-g = 0$ a.e. in $\mathbb{R}^{n}\setminus \Omega$, the fractional Sobolev embedding \cite[Theorem 6.5]{DPV} implies
\begin{equation}\label{ffff}
\begin{aligned}
&\left(\int_{B_{R}}|u_{k}-g|^{p}\,dx\right)^{\frac{p_{0}}{p}} 
\leq c[u_{k}-g]_{s_{0},p_{0};\mathbb{R}^{n}}^{p_{0}} \\
& \leq c[u_{k}-g]_{s_{0},p_{0};B_{R}}^{p_{0}}  + c\int_{B_{R}}|u_{k}(y)-g(y)|^{p_{0}}\left(\int_{\mathbb{R}^{n}\setminus B_{R}}\frac{dx}{|x-y|^{n+s_{0}p_{0}}}\right)\,dy  \\
& \leq c[u_{k}-g]_{s_{0},p_{0};B_{R}}^{p_{0}}  + c\int_{B_{R}}|u_{k}(y)-g(y)|^{p_{0}}\left(\int_{B_{2R}\setminus B_{R}}\frac{dx}{|x-y|^{n+s_{0}p_{0}}}\right)\,dy  \\
& \leq c\int_{B_{2R}}\int_{B_{R}}\frac{|(u_{k}-g)(x)-(u_{k}-g)(y)|^{p_{0}}}{|x-y|^{n+s_{0}p_{0}}}\,dxdy 
\leq c[u_{k}-g]_{s_{0},p_{0};B_{2R}}^{p_{0}},
\end{aligned}
\end{equation}
where we have used the fact that
\begin{equation*}
\int_{\mathbb{R}^{n}\setminus B_{R}}\frac{dx}{|x-y|^{n+s_{0}p_{0}}} \leq c(n)\int_{B_{2R}\setminus B_{R}}\frac{dx}{|x-y|^{n+s_{0}p_{0}}} \qquad \forall\, y \in B_{R}.
\end{equation*}
Applying Lemma \ref{inclusion} to the right-hand side of \eqref{ffff}, we have for all $k \in \mathbb{N}$
\begin{align*}
 \left(\int_{B_{R}}|u_{k}-g|^{p}\,dx\right)^{\frac{p_{0}}{p}} & \leq c[u_{k}-g]_{s_{0},p_{0};B_{2R}}^{p_{0}} \\
& \leq cR^{sp_{0}}[u_{k}-g]_{s,p;B_{2R}}^{p_{0}} \\
& \leq cR^{sp_{0}}\left(\iint_{\mathcal{C}_{\Omega}}\frac{|u_{k}(x)-u_{k}(y)|^{p}}{|x-y|^{n+sp}}\,dxdy + \iint_{\mathcal{C}_{\Omega}}\frac{|g(x)-g(y)|^{p}}{|x-y|^{n+sp}}\,dxdy\right)^{\frac{p_{0}}{p}} \\
& \leq cR^{sp_{0}}\left(C + \iint_{\mathcal{C}_{\Omega}}\frac{|g(x)-g(y)|^{p}}{|x-y|^{n+sp}}\,dxdy\right)^{\frac{p_{0}}{p}}.
\end{align*}
This implies that $\{u_{k}-g\}_{k}$ is bounded in $L^{p}(B_{R})$, and hence in $W^{s,p}_{0}(B_{R})$.
By the compact embedding theorem for fractional Sobolev spaces \cite[Theorem 7.1]{DPV}, there exist a subsequence $\{u_{k_{j}}-g\}_{j}$ and $v \in L^{p}(B_R)$ such that
\begin{align*}
\begin{cases}
u_{k_{j}}-g\ \longrightarrow\ v \quad \textrm{in} \quad L^{p}(B_{R}), \\
u_{k_{j}}-g\ \longrightarrow\ v \quad \textrm{a.e. in} \;\; B_{R},
\end{cases}
\quad \text{as }\ j\ \to\ \infty.
\end{align*}
We extend $v$ to $\mathbb{R}^{n}$ by letting $v = 0$ on $\mathbb{R}^{n}\setminus B_R$ and set $u \coloneqq v+g$. Then $u_{k_{j}} \rightarrow u$ a.e. in $\mathbb{R}^{n}$.
Finally, Fatou's lemma implies
\begin{align*}
\lefteqn{ \iint_{\mathcal{C}_{\Omega}}\frac{1}{p}|u(x)-u(y)|^{p}K_{sp}(x,y) + a(x,y)\frac{1}{q}|u(x)-u(y)|^{q}K_{tq}(x,y)\,dxdy }\\
& \leq \liminf_{j\rightarrow\infty} \iint_{\mathcal{C}_{\Omega}}\frac{1}{p}|u_{k_j}(x)-u_{k_j}(y)|^{p}K_{sp}(x,y) + a(x,y)\frac{1}{q}|u_{k_j}(x)-u_{k_j}(y)|^{q}K_{tq}(x,y)\,dxdy.
\end{align*}
This means that $u \in \mathcal{A}_g(\Omega)$ and it is a minimizer of $\mathcal{E}$.


The last assertion is clear since $u\in L^p(\Omega)$ and $u=g$ a.e. in $\R^n\setminus\Omega$.
\end{proof}

\begin{remark}
In fact, the above theorem still holds even when $a(\cdot,\cdot) \geq 0$ is not bounded above.
\end{remark}

\section{Caccioppoli estimates and Local boundedness}
\label{Caccio.bounded}
  
We start with the following lemma which implies that the multiplication of any function in $\mathcal{A}(\Omega)$ and a cut-off function is also a function in $\mathcal{A}(\Omega)$.  We recall the notation \eqref{modular}.
\begin{lemma}\label{lem.test}
Assume that the constants $s$, $t$, $p$ and $q$ satisfy \eqref{power}, and $\eta \in W^{1,\infty}_{0}(B_{r})$. If one of the following two conditions holds: 
\begin{itemize}
\item[(i)] The inequality  \eqref{assumption.bdd} holds and $v \in L^{p}(B_{2r})$ satisfies $\varrho(v;B_{2r})<\infty$;
\item[(ii)] $v \in L^{q}(B_{2r})$ satisfies $\varrho(v;B_{2r})<\infty$,
\end{itemize}
then $\varrho(v\eta;\mathbb{R}^{n})<\infty$. In particular, $v\eta \in \mathcal{A}(\Omega)$ whenever $\Omega \supset B_{2r}$.
\end{lemma}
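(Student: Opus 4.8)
The plan is to split the double integral defining $\varrho(v\eta;\mathbb{R}^n)$ into two regions and estimate each separately, reducing everything to the finiteness of $\varrho(v;B_{2r})$ and the summability assumptions on $v$. Since $\eta$ is supported in $B_r$, the integrand $H(x,y,|v(x)\eta(x)-v(y)\eta(y)|)$ vanishes whenever both $x,y\in\mathbb{R}^n\setminus B_r$, so it suffices to bound the contribution of $(x,y)$ with $x\in B_r$ (and $y$ arbitrary), and then the symmetric piece. I would first write
\[
\varrho(v\eta;\mathbb{R}^n)\le c\int_{B_r}\int_{\mathbb{R}^n}H\big(x,y,|v(x)\eta(x)-v(y)\eta(y)|\big)\frac{dxdy}{|x-y|^n},
\]
using symmetry of $H$ in $(x,y)$.

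The key algebraic step is the pointwise bound
\[
|v(x)\eta(x)-v(y)\eta(y)|\le |\eta(x)|\,|v(x)-v(y)|+|v(y)|\,|\eta(x)-\eta(y)|\le |v(x)-v(y)|+\|\nabla\eta\|_{L^\infty}|v(y)|\,|x-y|,
\]
valid since $|\eta|\le 1$ may be assumed (or carry $\|\eta\|_{L^\infty}$ along) and $\eta$ is Lipschitz. Plugging this into the definition \eqref{def.H} of $H$ and using convexity of $\tau\mapsto\tau^p$ and $\tau\mapsto\tau^q$ (i.e. $(a+b)^\ell\le 2^{\ell-1}(a^\ell+b^\ell)$), the integrand splits into a ``main'' part comparable to $H(x,y,|v(x)-v(y)|)$ — whose integral over $B_r\times B_r$ is controlled by $\varrho(v;B_{2r})<\infty$, and whose integral over $B_r\times(\mathbb{R}^n\setminus B_r)$ is a nonlocal-tail type quantity finite because $\eta(x)=0$ there so in fact only $x\in B_r$, $y\in B_r$ contributes to this piece after using $\eta(x)=0$ for... — and an ``error'' part involving $|v(y)|^p|x-y|^{p}/|x-y|^{n+sp}$ and $a(x,y)|v(y)|^q|x-y|^{q}/|x-y|^{n+tq}$. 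For the error part, after integrating in $x$ over $B_r$ one gets a bounded factor (since $p<n+sp$ is automatic and the exponent $q$ versus $n+tq$: note $tq<n$ may fail, so one must be a little careful and instead integrate $x\in B_{2r}$ where $|x-y|\lesssim r$ so $|x-y|^{q-tq-n}\cdot|x-y|^{n}$... ) leaving $\int_{B_r}(|v(y)|^p+a\,|v(y)|^q)\,dy$, which is finite: under (ii) because $v\in L^q(B_{2r})$ and $a$ is bounded, and under (i) because $v\in L^p(B_{2r})$ together with $\varrho(v;B_{2r})<\infty$ and the fractional Sobolev embedding $\mathcal A\subset L^q$ valid in range \eqref{assumption.bdd} (this is exactly where hypothesis (i) versus (ii) enters, and why \eqref{assumption.bdd} is needed in case (i)).

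For the complementary region $x\in B_r$, $y\in\mathbb{R}^n\setminus B_{2r}$, the term $\eta(y)=0$, so the integrand is $H(x,y,|v(x)||\eta(x)|)$; since $\eta(x)\le 1$ and $|x-y|\ge |y|/2$ roughly (after translating so that $B_r$ is centered appropriately), this is bounded by $c\big(|v(x)|^p|x-y|^{-n-sp}+a\,|v(x)|^q|x-y|^{-n-tq}\big)$, and integrating in $y$ over $\mathbb{R}^n\setminus B_{2r}$ yields a finite constant times $|v(x)|^p+a\,|v(x)|^q$, which we again integrate over $x\in B_r$ to get the same finite quantity $\int_{B_r}(|v|^p+a|v|^q)$ as before. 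The only genuine obstacle is the bookkeeping of exponents in the error integrals when $tq\ge n$ — this is handled by always keeping $|x-y|\lesssim r$ on the near-diagonal piece so that negative powers are harmless, and keeping $|x-y|\gtrsim r$ on the far piece so positive powers are harmless; no cancellation is needed. Finally, $\varrho(v\eta;\mathbb{R}^n)<\infty$ together with $v\eta\in L^p(\Omega)$ (clear since $v\eta$ is supported in $B_r\subset\Omega$ and $v\in L^p$ there) gives $v\eta\in\mathcal A(\Omega)$ by definition, using $\mathcal C_\Omega\subset\mathbb{R}^n\times\mathbb{R}^n$.
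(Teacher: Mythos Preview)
Your approach is essentially the same as the paper's: split the double integral into a near part (both points in $B_{2r}$), handled via the algebraic bound $|v\eta(x)-v\eta(y)|\le |\eta(x)||v(x)-v(y)|+|v(y)||\eta(x)-\eta(y)|$ together with the Lipschitz estimate on $\eta$, and a far part ($x\in B_r$, $y\in\mathbb R^n\setminus B_{2r}$), handled via $\eta(y)=0$; in both cases the residual terms are controlled by $\int_{B_{2r}}(|v|^p+\|a\|_{L^\infty}|v|^q)$, which is finite under either (i) (via the fractional Sobolev embedding) or (ii). Two places need tidying: the sentence about the main part over $B_r\times(\mathbb R^n\setminus B_r)$ is garbled and should simply be dropped---you never need to integrate $H(x,y,|v(x)-v(y)|)$ over that region, since you correctly split later at $B_{2r}$, and over $B_r\times B_{2r}\subset B_{2r}\times B_{2r}$ the main part is already dominated by $\varrho(v;B_{2r})$; and the concern about whether $tq<n$ is a red herring, since the near-diagonal integrability of the error term comes from $t<1$ (the relevant exponent is $(1-t)q-n>-n$), not from any relation between $tq$ and $n$.
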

\begin{proof}
Note that we also have $v \in L^q(B_{2r})$ in (i) by Lemma~\ref{SP} and \eqref{assumption.bdd}. We write
\begin{equation*}
\varrho(v\eta;\mathbb{R}^{n}) = \varrho(v\eta;B_{2r}) + 2\int_{\mathbb{R}^{n}\setminus B_{2r}}\int_{B_{2r}}H(x,y,|v(x)\eta(x)|)\,\frac{dxdy}{|x-y|^{n}}.
\end{equation*}
The first term is estimated as
\begin{align*}
&\varrho(v\eta;B_{2r})\\
& \leq c\int_{B_{2r}}\int_{B_{2r}}H(x,y,|(v(x)-v(y))\eta(y)|)\,\frac{dxdy}{|x-y|^{n}} \\
& \quad + c\int_{B_{2r}}\int_{B_{2r}}H(x,y,|v(x)(\eta(x)-\eta(y))|)\,\frac{dxdy}{|x-y|^{n}} \\
& \leq c\left(\lVert \eta \rVert_{L^{\infty}(B_{2r})} + 1\right)^{q}\varrho(v;B_{2r}) + c \lVert D\eta \rVert_{L^{\infty}(B_{2r})}^{p}\int_{B_{2r}}|v(x)|^{p}\int_{B_{4r}(x)}\frac{dy}{|x-y|^{n+(s-1)p}}\,dx \\
& \quad + c \lVert D\eta \rVert_{L^{\infty}(B_{2r})}^{q} \lVert a \rVert_{L^{\infty}}\int_{B_{2r}}|v(x)|^{q}\int_{B_{4r}(x)}\frac{dy}{|x-y|^{n+(t-1)q}}\,dx \\
& \leq c\left(\lVert \eta \rVert_{L^{\infty}(B_{2r})} + 1\right)^{q}\varrho(v;B_{2r}) +c\lVert D\eta \rVert_{L^{\infty}(B_{2r})}^pr^{(1-s)p}\int_{B_{2r}}|v(x)|^{p}\,dx \\
& \quad + c\lVert D\eta \rVert_{L^{\infty}(B_{2r})}^{q} \lVert a \rVert_{L^{\infty}}r^{(1-t)q}\int_{B_{2r}}|v(x)|^{q}\,dx \\
& < \infty.
\end{align*}
The second term is estimated as
\begin{align*}
\lefteqn{ \int_{\mathbb{R}^{n}\setminus B_{2r}}\int_{B_{2r}}H(x,y,|v(x)\eta(x)|)\,\frac{dxdy}{|x-y|^{n}} } \\
& \leq \left(\lVert \eta \rVert_{L^{\infty}(B_{2r})} + 1\right)^{q}\int_{\mathbb{R}^{n}\setminus B_{2r}}\int_{B_{r}}\frac{|v(x)|^{p}}{|x-y|^{n+sp}} + \lVert a \rVert_{L^{\infty}}\frac{|v(x)|^{q}}{|x-y|^{n+tq}}\,dxdy \\
& \leq c\left(\lVert \eta \rVert_{L^{\infty}(B_{2r})} + 1\right)^{q}\left( r^{-sp}\int_{B_{r}}|v(x)|^{p}\,dx + \lVert a \rVert_{L^{\infty}}r^{-tq}\int_{B_{r}}|v(x)|^{q}\,dx \right)  < \infty,
\end{align*}
and  the conclusion follows.
\end{proof}

Next, we prove a nonlocal Caccioppoli type estimate. We again recall \eqref{modular} with \eqref{def.H}, and further define
\begin{equation}\label{def.h}
h(x,y,\tau) \coloneqq \frac{\tau^{p-1}}{|x-y|^{sp}}+a(x,y) \frac{\tau^{q-1}}{|x-y|^{tq}}, \quad x,y\in\mathbb{R}^{n}, \tau \geq 0.
\end{equation}

\begin{lemma}\label{caccioppoli.est}
Let $K_{sp},K_{tq},a:\mathbb{R}^{n}\times\mathbb{R}^{n}\rightarrow\mathbb{R}$ be symmetric and satisfy \eqref{kernel.growth}-\eqref{a.bound} with \eqref{assumption.bdd}. Let $u\in\mathcal{A}(\Omega)\cap L^{q-1}_{sp}(\Omega)$ be a weak solution to \eqref{main.eq}. Then for any ball $B_{2r}\equiv B_{2r}(x_{0}) \Subset \Omega$ and any $\phi \in C^{\infty}_{0}(B_{r})$ with $0\leq\phi\leq1$, we have
\begin{equation}\label{caccio}\begin{aligned}
\lefteqn{ \int_{B_{r}}\int_{B_{r}}H(x,y,|w_{\pm}(x)-w_{\pm}(y)|)(\phi^{q}(x)+\phi^{q}(y))\,\frac{dxdy}{|x-y|^{n}} }\\
& \leq c\int_{B_{r}}\int_{B_{r}}H\big(x,y,|(\phi(x)-\phi(y))(w_{\pm}(x)+w_{\pm}(y))|\big)\,\frac{dxdy}{|x-y|^{n}} \\
& \quad + c\left(\sup_{y \in \supp\,\phi}\int_{\mathbb{R}^{n}\setminus B_{r}}h(x,y,w_{\pm}(x))\,\frac{dx}{|x-y|^{n}}\right)\int_{B_{r}}w_{\pm}(x)\phi^{q}(x)\,dx
\end{aligned}\end{equation}
for some $c\equiv c(n,s,t,p,q,\Lambda)>0$, where $w_{\pm} \coloneqq (u-k)_{\pm}$ with $k\ge 0$.
\end{lemma}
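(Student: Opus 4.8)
The plan is to derive the Caccioppoli inequality \eqref{caccio} by testing the weak formulation \eqref{weak.formulation} with the function $\varphi = w_{\pm}\phi^{q}$, which is admissible in $\mathcal{A}(\Omega)$ by Lemma~\ref{lem.test}. I will treat the case $w_{+}=(u-k)_{+}$; the case $w_{-}$ is symmetric (replacing $u$ by $-u$ and $k$ by $-k$). Splitting the domain of integration $\mathcal{C}_{\Omega}$ into the local part $B_{r}\times B_{r}$ and the two nonlocal parts $(\mathbb{R}^{n}\setminus B_{r})\times B_{r}$ and $B_{r}\times(\mathbb{R}^{n}\setminus B_{r})$ (which contribute equally by symmetry of the kernels and of $a$), the weak formulation gives
\[
I_{\mathrm{loc}} + 2\,I_{\mathrm{nonloc}} = 0,
\]
where $I_{\mathrm{loc}}$ is the double integral over $B_{r}\times B_{r}$ of
$\big[|u(x)-u(y)|^{p-2}(u(x)-u(y))(\varphi(x)-\varphi(y))K_{sp} + a(x,y)|u(x)-u(y)|^{q-2}(u(x)-u(y))(\varphi(x)-\varphi(y))K_{tq}\big]$
and $I_{\mathrm{nonloc}}$ is the corresponding integral over $(\mathbb{R}^{n}\setminus B_{r})\times B_{r}$.

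For the local part, the key pointwise estimate is the following: for the $p$-term, whenever $(u(x)-u(y))(\varphi(x)-\varphi(y))$ is expanded with $\varphi = w_{+}\phi^{q}$, one uses the elementary inequality (valid for $a_{1},a_{2}\in\mathbb{R}$, $b_{1},b_{2}\ge 0$)
\[
|a_{1}-a_{2}|^{p-2}(a_{1}-a_{2})\big((a_{1})_{+}b_{1}^{q} - (a_{2})_{+}b_{2}^{q}\big) \ge \tfrac{1}{c}|(a_{1})_{+}-(a_{2})_{+}|^{p}(b_{1}^{q}+b_{2}^{q}) - c\,|(a_{1})_{+}+(a_{2})_{+}|^{p}|b_{1}-b_{2}|^{p},
\]
(with $a_i = u(\cdot)-k$, $b_i=\phi(\cdot)$), and the analogous inequality with $p$ replaced by $q$ for the $a(x,y)$-term; such inequalities are standard in the fractional $p$-Laplacian literature (cf.\ \cite{DKP1,DKP2,coz}) and follow from convexity together with Young's inequality. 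Combining these with the kernel bounds \eqref{kernel.growth} produces exactly the term
$\int_{B_{r}}\int_{B_{r}}H(x,y,|w_{+}(x)-w_{+}(y)|)(\phi^{q}(x)+\phi^{q}(y))\,\frac{dxdy}{|x-y|^{n}}$
on the left, at the cost of the first term on the right-hand side of \eqref{caccio}, namely $\int_{B_{r}}\int_{B_{r}}H(x,y,|(\phi(x)-\phi(y))(w_{+}(x)+w_{+}(y))|)\,\frac{dxdy}{|x-y|^{n}}$, after absorbing constants. Note here $w_{+}(x)+w_{+}(y) = (u(x)-k)_{+} + (u(y)-k)_{+}$, and the $H$-structure cleanly packages both the $p$- and $q$-contributions because $a(x,y)$ carries through all manipulations as a fixed nonnegative weight.

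For the nonlocal part, since $\varphi$ is supported in $B_{r}$ and $\varphi\ge 0$, for $x\in\mathbb{R}^{n}\setminus B_{r}$ and $y\in B_{r}$ we have $\varphi(x)-\varphi(y) = -w_{+}(y)\phi^{q}(y)\le 0$, so the $p$-integrand is
$-|u(x)-u(y)|^{p-2}(u(x)-u(y))\,w_{+}(y)\phi^{q}(y)K_{sp}$, and moving this to the right-hand side we must bound $|u(x)-u(y)|^{p-2}(u(x)-u(y))$ from above. Using that on $\{y : \phi(y)>0\}$ one has $u(y)-k = w_{+}(y)\ge 0$ while the contribution is only dangerous when $u(x)$ is large, the bound $|u(x)-u(y)|^{p-2}(u(x)-u(y)) \le (u(x)-k)_{+}^{p-1} = w_{+}(x)^{p-1}$ up to harmless terms (this is where the monotonicity $a\mapsto a^{p-1}(a)$ and the one-sided truncation enter) yields, together with \eqref{kernel.growth}, a bound by $w_{+}(x)^{p-1}|x-y|^{-(n+sp)}$; the $q$-term similarly gives $a(x,y)w_{+}(x)^{q-1}|x-y|^{-(n+tq)}$, and the sum is precisely $h(x,y,w_{+}(x))|x-y|^{-n}$ as in \eqref{def.h}. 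Pulling the $y$-integral of $w_{+}(y)\phi^{q}(y)$ out and taking the supremum over $y\in\supp\,\phi$ of $\int_{\mathbb{R}^{n}\setminus B_{r}}h(x,y,w_{+}(x))\,\frac{dx}{|x-y|^{n}}$ gives the second term on the right of \eqref{caccio}; this quantity is finite because $u\in L^{q-1}_{sp}(\mathbb{R}^{n})$ and the ball is compactly contained in $\Omega$.

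The main obstacle, and the step requiring genuine care rather than bookkeeping, is handling the \emph{mixed} nonlocal cross term that arises because the two truncations do not interact linearly: when bounding $|u(x)-u(y)|^{p-2}(u(x)-u(y))$ for $x$ outside the ball and $y$ inside, one cannot simply replace $u$ by its truncation without generating error terms of the form $(k-u(x))_{+}$ or, worse, contributions where $u(y)$ is close to $k$. One resolves this by carefully distinguishing the signs of $u(x)-k$ and using that $\phi^{q}(y)w_{+}(y)$ already vanishes where $u(y)\le k$, so that only $w_{+}(x)^{p-1}$ and $w_{+}(x)^{q-1}$ survive after elementary estimates; the truncation inequality $(a-k)_{+}^{p-1} - (b-k)_{+}^{p-1} \le$ (controllable) that one needs here is again of the type in \cite[Lemma~3.1]{DKP2} but must be applied with the $q$-term's nonnegative weight $a(x,y)$ kept intact throughout. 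Once the pointwise estimates are in place, the remaining work is combining $I_{\mathrm{loc}}=-2I_{\mathrm{nonloc}}$, absorbing the positive lower bound on the left, and collecting constants depending only on $n,s,t,p,q,\Lambda$, which completes the proof.
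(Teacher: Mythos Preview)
Your approach is correct and essentially the same as the paper's: test with $w_{+}\phi^{q}$ (admissible by Lemma~\ref{lem.test}), split into the local piece over $B_{r}\times B_{r}$ and the nonlocal piece, use the standard pointwise coercivity inequality (which the paper derives explicitly via the identity $w_{+}(x)\phi^{q}(x)-w_{+}(y)\phi^{q}(y)=\tfrac12(w_{+}(x)-w_{+}(y))(\phi^{q}(x)+\phi^{q}(y))+\tfrac12(w_{+}(x)+w_{+}(y))(\phi^{q}(x)-\phi^{q}(y))$ and Young's inequality) for the local part, and the one-sided bound $\Phi_{\ell}(u(x)-u(y))w_{+}(y)\le w_{+}(x)^{\ell-1}w_{+}(y)$ for the nonlocal part. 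Your ``main obstacle'' paragraph overcomplicates matters: once you observe that the nonlocal integrand is multiplied by $w_{+}(y)\phi^{q}(y)$, the case $u(y)\le k$ contributes nothing and the inequality $\Phi_{\ell}(u(x)-u(y))\le w_{+}(x)^{\ell-1}$ holds cleanly with no extra error terms (if $u(x)\le u(y)$ the left side is nonpositive; if $u(x)>u(y)>k$ then $u(x)-u(y)<u(x)-k=w_{+}(x)$).
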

\begin{proof}
We follow the proof of \cite[Lemma 3.1]{giacomoni2021sup} and only consider the estimate for $w_{+}$, since the estimate for $w_{-}$ can be proved similarly. In light of Lemma~\ref{lem.test} for the case (i), we can test the weak formulation \eqref{weak.formulation} with $w_{+}\phi^{q}\in \mathcal{A}(\Omega)$. Using the short notation 
\begin{equation}\label{phiell}
\Phi_{\ell}(\tau) \coloneqq |\tau|^{\ell-2}\tau , \quad \ell \in \{p,q\} \ \text{ and } \ \tau \geq 0,
\end{equation}
 we have
\begin{align*}
0 & = \int_{B_{r}}\int_{B_{r}}\left[\Phi_{p}(u(x)-u(y))(w_{+}(x)\phi^{q}(x)-w_{+}(y)\phi^{q}(y))K_{sp}(x,y)\right. \\
& \qquad \qquad \quad + \left. a(x,y)\Phi_{q}(u(x)-u(y))(w_{+}(x)\phi^{q}(x)-w_{+}(y)\phi^{q}(y))K_{tq}(x,y)\right]\,dxdy \\
& \quad + 2\int_{\mathbb{R}^{n}\setminus B_{r}}\int_{B_{r}}\left[\Phi_{p}(u(x)-u(y))w_{+}(x)\phi^{q}(x)K_{sp}(x,y)\right. \\
& \qquad \qquad \qquad \qquad + \left. a(x,y)\Phi_{q}(u(x)-u(y))w_{+}(x)\phi^{q}(x)K_{tq}(x,y)\right]\,dxdy \\
& \eqqcolon I_{1} + I_{2}.
\end{align*}

We first estimate $I_1$. Assume that  $u(x) \geq u(y)$. Then,
\begin{align*}
\lefteqn{\Phi_{\ell}(u(x)-u(y))(w_{+}(x)\phi^{q}(x)-w_{+}(y)\phi^{q}(y))}\\
& = (u(x)-u(y))^{\ell-1}((u(x)-k)_{+}\phi^{q}(x)-(u(y)-k)_{+}\phi^{q}(y)) \\
& = \begin{cases}
(w_{+}(x)-w_{+}(y))^{\ell-1}(w_{+}(x)\phi^{q}(x)-w_{+}(y)\phi^{q}(y)), \quad & u(x)\geq u(y) \geq k\\
(u(x)-u(y))^{\ell-1}w_{+}(x)\phi^{q}(x), & u(x) > k \geq u(y) \\
0, & k\geq u(x)\geq u(y)
\end{cases} \\
& \geq (w_{+}(x)-w_{+}(y))^{\ell-1}(w_{+}(x)\phi^{q}(x)-w_{+}(y)\phi^{q}(y)) \\
& = \Phi_{\ell}(w_{+}(x)-w_{+}(y))(w_{+}(x)\phi^{q}(x)-w_{+}(y)\phi^{q}(y)),
\end{align*}
and hence
\begin{equation}\label{I1.est}
\begin{aligned}
I_{1} & \geq \int_{B_{r}}\int_{B_{r}}\left[\Phi_{p}(w_{+}(x)-w_{+}(y))(w_{+}(x)\phi^{q}(x)-w_{+}(y)\phi^{q}(y))K_{sp}(x,y)\right. \\
 & \qquad \qquad \quad + \left. a(x,y)\Phi_{q}(w_{+}(x)-w_{+}(y))(w_{+}(x)\phi^{q}(x)-w_{+}(y)\phi^{q}(y))K_{tq}(x,y)\right]\,dxdy.
\end{aligned}
\end{equation}
Moreover, 
\begin{align*}
w_{+}(x)\phi^{q}(x)-w_{+}(y)\phi^{q}(y) = \frac{w_{+}(x)-w_{+}(y)}{2}(\phi^{q}(x)+\phi^{q}(y))+\frac{w_{+}(x)+w_{+}(y)}{2}(\phi^{q}(x)-\phi^{q}(y)),
\end{align*}
which implies
\begin{align*}
\lefteqn{ \Phi_{\ell}(w_{+}(x)-w_{+}(y))(w_{+}(x)\phi^{q}(x)-w_{+}(y)\phi^{q}(y)) }\\ 
& \geq |w_{+}(x)-w_{+}(y)|^{\ell}\frac{\phi^{q}(x)+\phi^{q}(y)}{2} - |w_{+}(x)-w_{+}(y)|^{\ell-1}\frac{w_{+}(x)+w_{+}(y)}{2}|\phi^{q}(x)-\phi^{q}(y)|.
\end{align*}
Here, we use Lemma \ref{size.comp} to see that
\begin{equation*}
\begin{aligned}
|\phi^{q}(x)-\phi^{q}(y)| & \leq q(\phi^{q-1}(x)+\phi^{q-1}(y))|\phi(x)-\phi(y)| \\
& \leq c(q)(\phi^{q}(x)+\phi^{q}(y))^{(q-1)/q}|\phi(x)-\phi(y)|.
\end{aligned}
\end{equation*}
Thus, using Young's inequality, we get
\begin{align*}
\lefteqn{ |w_{+}(x)-w_{+}(y)|^{\ell-1}(w_{+}(x)+w_{+}(y))|\phi^{q}(x)-\phi^{q}(y)| } \\
& \leq |w_{+}(x)-w_{+}(y)|^{\ell-1}(w_{+}(x)+w_{+}(y))(\phi^{q}(x)+\phi^{q}(y))^{\frac{\ell-1}{\ell}+\frac{q-\ell}{q\ell}}|\phi(x)-\phi(y)| \\
& \leq \varepsilon|w_{+}(x)-w_{+}(y)|^{\ell}(\phi^{q}(x)+\phi^{q}(y)) \\
& \quad + c(\varepsilon)(\phi^{q}(x)+\phi^{q}(y))^{(q-\ell)/q}|\phi(x)-\phi(y)|^{\ell}(w_{+}(x)+w_{+}(y))^{\ell}.
\end{align*}
Since $0 \leq \phi \leq 1$ and $(q-\ell)/q \geq 0$, after choosing $\varepsilon$ so small, we discover
\begin{align*}
\lefteqn{\Phi_{\ell}(w_{+}(x)-w_{+}(y))(w_{+}(x)\phi^{q}(x)-w_{+}(y)\phi^{q}(y))} \\
& \geq |w_{+}(x)-w_{+}(y)|^{\ell}\frac{\phi^{q}(x)+\phi^{q}(y)}{4} - c|\phi(x)-\phi(y)|^{\ell}(w_{+}(x)+w_{+}(y))^{\ell}.
\end{align*}
We notice that by the symmetry of  the above inequality for $x$ and $y$, we also have the same inequality when $u(x)<u(y)$.
Inserting this into \eqref{I1.est} and using \eqref{kernel.growth}, we have 
\begin{align*}
I_{1} & \geq  \frac{1}{4\Lambda}\int_{B_{r}}\int_{B_{r}}H(x,y,|w_{+}(x)-w_{+}(y)|)(\phi^{q}(x)+\phi^{q}(y))\,\frac{dxdy}{|x-y|^{n}} \\
&\quad -  c\int_{B_{r}}\int_{B_{r}}H(x,y, |\phi(x)-\phi(y)|(w_{+}(x)+w_{+}(y)))\,\frac{dxdy}{|x-y|^{n}} .
\end{align*}

For $I_{2}$, we observe that 
\begin{align*}
\Phi_{\ell}(u(x)-u(y))w_{+}(x) \geq -w_{+}^{\ell-1}(y)w_{+}(x)
\end{align*}
and use \eqref{kernel.growth}, to find 
\begin{align*}
I_{2}
& \geq -c\int_{\mathbb{R}^{n}\setminus B_{r}}\int_{B_{r}}h(x,y,w_{+}(y)) w_{+}(x)\phi^{q}(x)\,\frac{dxdy}{|x-y|^{n}} \\
& \geq -c\left(\sup_{x\in \supp\,\phi}\int_{\mathbb{R}^{n}\setminus B_{r}}h(x,y,w_{+}(y))\,\frac{dy}{|x-y|^{n}}\right)\int_{B_{r}}w_{+}(x)\phi^{q}(x)\,dx.
\end{align*}
Combining the above 
estimates with $I_{1}+I_{2}=0$, we obtain \eqref{caccio}.
\end{proof}

\begin{remark}\label{rmk.caccioppoli}
If $u$ is locally bounded, then the assumption \eqref{assumption.bdd} in Lemma \ref{caccioppoli.est} can be eliminated,
see the case (ii) in Lemma \ref{lem.test}.
Moreover, we can also obtain \eqref{caccio} when $q>p^*_s$ by using a truncation argument as in \cite[Lemma 4.2]{ok2021local} provided the right-hand side of \eqref{caccio} is finite.
\end{remark}

Now, we are ready to  prove the local boundedness of weak solutions to \eqref{main.eq}. 

\begin{proof}[Proof of Theorem \ref{thm.bdd}]
For convenience, we define
\begin{align*}
H_{0}(\tau) &\coloneqq \tau^{p}+\lVert a \rVert_{L^{\infty}}\tau^{q}, \quad \tau\ge 0.
\end{align*}
In the following, $c$ means a constant depending only on $\data$.

Let $B_{r} \equiv B_{r}(x_{0}) \Subset \Omega$ be a fixed ball with $r \leq 1$. For $r/2 \leq \rho < \sigma \leq r$ and $k>0$, we denote 
\begin{equation*}
A^{+}(k,\rho) \coloneqq \{x\in B_{\rho}: u(x) \geq k \}
\end{equation*} 
and apply Lemma \ref{ineq1} with $f \equiv (u-k)_{+}$ to have
\begin{equation}\label{sf}
\begin{aligned}
\rho^{-sp}\mean{B_{\rho}}H_{0}(f)\,dx
& \leq \mean{B_{\rho}}\left(\frac{f}{\rho^{s}}\right)^{p}+\lVert a \rVert_{L^{\infty}}\left(\frac{f}{\rho^{t}}\right)^{q}\,dx \\
& \leq c\lVert a \rVert_{L^{\infty}}\rho^{(s-t)q}\left(\mean{B_{\rho}}\int_{B_{\rho}}\frac{|f(x)-f(y)|^{p}}{|x-y|^{n+sp}}\,dxdy\right)^{\frac{q}{p}} \\
& \quad + c\left(\frac{|A^{+}(k,\rho)|}{|B_{\rho}|}\right)^{\frac{sp}{n}}\mean{B_{\rho}}\int_{B_{\rho}}\frac{|f(x)-f(y)|^{p}}{|x-y|^{n+sp}}\,dxdy \\
& \quad + c\left(\frac{|A^{+}(k,\rho)|}{|B_{\rho}|}\right)^{p-1}\mean{B_{\sigma}}\left(\frac{f}{\rho^{s}}\right)^{p} + \lVert a \rVert_{L^{\infty}}\left(\frac{f}{\rho^{t}}\right)^{q}\,dx.
\end{aligned}
\end{equation}
We now fix $0<h<k$ and observe that, for $x\in A^{+}(k,\rho) \subset A^{+}(h,\rho)$, 
\begin{align*}
&(u(x)-h)_{+} = u(x)-h \geq k-h, \\
&(u(x)-h)_{+} = u(x)-h \geq u(x)-k = (u(x)-k)_{+}.
\end{align*}
This implies
\begin{equation}\label{aplus.est}
|A^{+}(k,\rho)| \leq \int_{A^{+}(k,\rho)}\frac{(u-h)_{+}^{p}}{(k-h)^{p}}\,dx \leq \frac{1}{(k-h)^{p}}\int_{A^{+}(h,\sigma)}H_{0}((u-h)_{+})\,dx
\end{equation}
and
\begin{equation}\label{uk.est}
\begin{aligned}
\mean{B_{\rho}}(u-k)_{+}\,dx &\leq \mean{B_{\sigma}}(u-h)_{+}\left(\frac{(u-h)_{+}}{k-h}\right)^{p-1}\,dx \\
& \leq \frac{1}{(k-h)^{p-1}}\mean{B_{\sigma}}H_{0}((u-h)_{+})\,dx.
\end{aligned}
\end{equation}
We then choose a cut-off function $\phi \in C^{\infty}_{0}(B_{\frac{\rho+\sigma}{2}})$ satisfying $0\leq\phi\leq1$, $\phi \equiv 1$ in $B_{\rho}$ and $|D\phi| \leq 4/(\sigma-\rho)$. 
Denoting the tail by
\begin{equation*}
T(v;r) \coloneqq \int_{\mathbb{R}^{n}\setminus B_{r}}\frac{|v(x)|^{p-1}}{|x-x_{0}|^{n+sp}}+\lVert a \rVert_{L^{\infty}}\frac{|v(x)|^{q-1}}{|x-x_{0}|^{n+sp}}\,dx,
\end{equation*} 
Lemma~\ref{caccioppoli.est} gives
\begin{equation*}
\begin{aligned}
\lefteqn{ \mean{B_{\rho}}\int_{B_{\rho}}\frac{|f(x)-f(y)|^{p}}{|x-y|^{n+sp}}\,dxdy }\\
& \leq \frac{c}{(\sigma-\rho)^{p}}\mean{B_{\sigma}}(u(x)-h)_{+}^{p}\int_{B_{\sigma}}\frac{1}{|x-y|^{n+(s-1)p}}\,dydx \\
& \quad + \frac{c\lVert a \rVert_{L^{\infty}}}{(\sigma-\rho)^{q}}\mean{B_{\sigma}}(u(x)-h)_{+}^{q}\int_{B_{\sigma}}\frac{1}{|x-y|^{n+(t-1)q}}\,dydx \\
& \quad +c\left(\sup_{x\in\supp\,\phi}\int_{\mathbb{R}^{n}\setminus B_{\sigma}}\frac{(u(y)-k)_{+}^{p-1}}{|x-y|^{n+sp}}+\lVert a \rVert_{L^{\infty}}\frac{(u(y)-k)_{+}^{q-1}}{|x-y|^{n+tq}}\,dy\right)\mean{B_{\sigma}}(u(x)-k)_{+}\,dx \\
& \leq \frac{c\rho^{(1-s)p}}{(\sigma-\rho)^{p}}\mean{B_{\sigma}}(u-h)_{+}^{p}\,dx + \frac{c\lVert a \rVert_{L^{\infty}}\rho^{(1-t)q}}{(\sigma-\rho)^{q}}\mean{B_{\sigma}}(u-h)_{+}^{q}\,dx \\
& \quad + \frac{c(\sigma+\rho)}{\sigma-\rho}\left(\int_{\mathbb{R}^{n}\setminus B_{\sigma}}\frac{(u(y)-k)_{+}^{p-1}}{|y-x_{0}|^{n+sp}}+\lVert a \rVert_{L^{\infty}}\frac{(u(y)-k)_{+}^{q-1}}{|y-x_{0}|^{n+tq}}\,dy\right)\mean{B_{\tau}}(u(x)-k)_{+}\,dx \\
& \leq \frac{cr^{(1-t)p}}{(\sigma-\rho)^{q}}\mean{B_{\sigma}}H_{0}((u-h)_{+})\,dx + \frac{c(\sigma+\rho)}{\sigma-\rho}\left[T((u-k)_{+};\sigma)\right]\mean{B_{\tau}}(u-k)_{+}\,dx,
\end{aligned}
\end{equation*}
where 
we have used that
$\frac{|y-x_{0}|}{|y-x|} \leq 1+\frac{|x-x_{0}|}{|y-x|} \leq 1+\frac{\sigma+\rho}{\sigma-\rho} \leq  2\frac{\sigma+\rho}{\sigma-\rho}$ for $x\in \supp\, \phi$ and $y\in \R^n\setminus B_{\sigma}$.
Combining this estimate together with \eqref{sf}-\eqref{uk.est} implies 
\begin{align*}
\lefteqn{\rho^{-sp}\mean{B_{\rho}}H_{0}((u-k)_{+})\,dx}\\
& \leq c\rho^{(s-t)q}\frac{r^{(1-t)q}}{(\sigma-\rho)^{q^{2}/p}}\left(\mean{B_{\sigma}}H_{0}((u-h)_{+})\,dx\right)^{\frac{q}{p}} \\
& \quad + c\frac{\sigma+\rho}{\sigma-\rho}\rho^{(s-t)q}[T((u-k)_{+};\sigma)]^{\frac{q}{p}}\frac{1}{(k-h)^{q/p'}}\left(\mean{B_{\sigma}}H_{0}((u-h)_{+})\,dx\right)^{\frac{q}{p}} \\
& \quad + \frac{c}{(k-h)^{sp^{2}/n}}\frac{r^{(1-t)p}}{(\sigma-\rho)^{q}}\left(\mean{B_{\sigma}}H_{0}((u-h)_{+})\,dx\right)^{1+\frac{sp}{n}} \\
& \quad + \frac{c(\sigma+\rho)(k-h)^{sp^{2}/n+p-1}}{\sigma-\rho} [T((u-k)_{+};\sigma)]\left(\mean{B_{\sigma}}H_{0}((u-h)_{+})\,dx\right)^{1+\frac{sp}{n}} \\
& \quad + \frac{c}{(k-h)^{p(p-1)}}r^{-tq}\left(\mean{B_{\sigma}}H_{0}((u-h)_{+})\,dx\right)^{p}.
\end{align*}

Now, for $i=0,1,2,...$ and $k_{0}>1$, we write 
\begin{equation*}
\sigma_{i} \coloneqq \frac{r}{2}(1+2^{-i}),\quad k_{i} \coloneqq 2k_{0}(1-2^{-i-1})\quad \textrm{and}\quad y_{i} \coloneqq \int_{A^{+}(k_{i},\sigma_{i})}H_{0}((u-k_{i})_{+})\,dx.
\end{equation*}
Since $H_{0}(u) \in L^{1}(\Omega)$ from the assumption \eqref{assumption.bdd}, we see that 
\[
y_0 = \int_{A^+(k_0,r)}H_{0}((u-k_0)_+) \,dx \ \ \longrightarrow\ \ 0 \quad \text{as }\ k_0\ \to \ \infty.
\]
First, we consider $k_0>1$ so large that
\[
y_{i}\le y_{i-1} \le \cdots \le y_0 \le  1, \quad i=1,2,\dots.
\]
Then, since
\begin{equation*}
T((u-k_{i})_{+};\sigma_{i}) \leq T(u;r/2) < \infty,
\end{equation*}
we have
\begin{align*}
y_{i+1} & \leq \tilde c\left(2^{iq^{2}/p}y_{i}^{q/p} + 2^{i(q/p' + 1)}y_{i}^{q/p} + 2^{i(sp^{2}/n+q)}y_{i}^{1+(sp/n)}  + 2^{i(sp^{2}/n+p)}y_{i}^{1+(sp/n)} + 2^{ip(p-1)}y_{i}^{p}\right) \\
& \leq \tilde c2^{\theta i}y_{i}^{1+\beta}
\end{align*}
for some constant $\tilde c >0$ depending on $\data$, $r$ and $T(u;r/2)$, where
\begin{equation*}
\theta = \max\left\{\frac{q^{2}}{p},\frac{q}{p'}+1,\frac{sp^{2}}{n}+q,p(p-1)\right\}, \qquad \beta = \min\left\{\frac{q}{p}-1,\frac{sp}{n},p-1\right\}.
\end{equation*}
Finally, we can choose 
$k_{0}$ so large that 
\begin{equation*}
y_{0} \leq \tilde{c}^{-1/\beta}2^{-\theta/\beta^{2}}
\end{equation*} 
holds. Then Lemma \ref{iterlem} implies
\begin{equation*}
y_{\infty} = \int_{A^{+}(2k_{0},r/2)}H_{0}((u-2k_{0})_{+})\,dx = 0,
\end{equation*}
which means that $u \leq 2k_{0}$ a.e. in $B_{r/2}$.

Applying the same argument to $-u$, we consequently obtain $u \in L^{\infty}(B_{r/2})$.
\end{proof}

\section{H\"older Continuity}\label{Holder}

Throughout this section, we assume that the modulating coefficient $a(\cdot,\cdot)$ satisfies \eqref{a.bound}-\eqref{a.holder}, and that a weak solution $u\in \mathcal{A}(\Omega)\cap L^{q-1}_{sp}(\R^n)$ under consideration is locally bounded in $\Omega$. We 
fix any $\Omega'\Subset \Omega$ and define
\begin{equation}\label{def.M}
M \equiv M(\Omega') \coloneqq 1+\|u\|_{L^\infty(\Omega')}^{q-p}.
\end{equation}

\subsection{Logarithmic estimate} We start with obtaining a logarithmic type estimate. This implies Corollary \ref{log.cor}, which will play a crucial role in the proof of H\"older continuity.
\begin{lemma}\label{log.lemma}
Under the assumptions in Theorem \ref{thm.hol}, 
let $u\in\mathcal{A}(\Omega)\cap L^{q-1}_{sp}(\mathbb{R}^{n})$ be a weak supersolution to \eqref{main.eq} such that $u \in L^{\infty}(\Omega')$ 
and $u \geq 0$ in a ball $B_{R} \equiv B_{R}(x_{0}) \subset \Omega'$ with $R<1$. 
Then the following estimate holds true for any $0 < \rho < R/2$ and $d>0$:
\begin{align*}
\int_{B_{\rho}}\int_{B_{\rho}}\left|\log\left(\frac{u(x)+d}{u(y)+d}\right)\right|\frac{dydx}{|x-y|^{n}}
& \leq c\widetilde M^{2}\Bigg(\rho^{n} + \rho^{n+sp}d^{1-p}\int_{\mathbb{R}^{n}\setminus B_{R}}\frac{u_{-}^{p-1}(y)+u_{-}^{q-1}(y)}{|y-x_{0}|^{n+sp}}\,dy \\
&\qquad\qquad  + \rho^{n+tq}d^{1-q}\int_{\mathbb{R}^{n}\setminus B_{R}(x_{0})}\frac{u_{-}^{q-1}(y)}{|y-x_{0}|^{n+tq}}\,dy \bigg)
\end{align*}
for some $c \equiv c(\data_1)$, where $\widetilde M \equiv \widetilde M  (\Omega') \coloneqq 1+(\lVert u \rVert_{L^{\infty}(\Omega')}+d)^{q-p}$.
\end{lemma}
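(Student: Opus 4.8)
The plan is to adapt the proof of the logarithmic estimate for the fractional $p$-Laplacian (\cite[Lemma 1.3]{DKP2}) by testing the weak supersolution inequality with a suitable negative power of $u+d$. Concretely, I would choose the test function $\varphi(x) = (u(x)+d)^{1-p}\phi^{q}(x) - d^{1-p}\phi^{q}(x)$ for a cut-off $\phi\in C_0^\infty(B_{3\rho/2})$ with $\phi\equiv 1$ on $B_\rho$, $0\le\phi\le 1$ and $|D\phi|\le c/\rho$; one must first verify via Lemma~\ref{lem.test}(ii) (using local boundedness of $u$, so the relevant integrability is automatic) that $\varphi\in\mathcal A(\Omega)$ and $\varphi=0$ outside $B_{2\rho}\subset B_R$, hence admissible. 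Plugging into \eqref{weak.formulation} with ``$\ge$'' splits the integral into the local part over $B_R\times B_R$ and two tail parts over $(\R^n\setminus B_R)\times B_R$.

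For the local part, the key algebraic fact is the pointwise inequality, valid for $a\ge b>0$ and $\ell\ge 1$,
\begin{equation*}
\Phi_\ell(a-b)\big((a+d)^{1-\ell}-(b+d)^{1-\ell}\big) \le -c(\ell)\left|\log\frac{a+d}{b+d}\right|^{\ell} + c(\ell)|\phi(x)-\phi(y)|^{\ell},
\end{equation*}
combined with the standard handling of the $\phi^q(x)-\phi^q(y)$ difference via Young's inequality exactly as in Lemma~\ref{caccioppoli.est}. Here one must carry \emph{both} the $p$-term (with kernel $K_{sp}$) and the $q$-term (with $a(x,y)K_{tq}$), and this is where the H\"older continuity of $a(\cdot,\cdot)$ and the restriction $tq\le sp+\alpha$ enter: in the $q$-term one writes $|x-y|^{-tq} = |x-y|^{-sp}\cdot|x-y|^{sp-tq}$, bounds $a(x,y)\le a(x_0,x_0) + [a]_\alpha|x-y|^\alpha$ on $B_R\times B_R$ (or compares to a frozen coefficient), and uses $R<1$ together with $sp-tq+\alpha\ge 0$ to absorb the $q$-contributions into the $p$-type logarithmic quantity up to a constant times powers of $M$ (equivalently $\widetilde M$). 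After absorbing the $\phi$-difference error terms, whose integrals over $B_{3\rho/2}\times B_{3\rho/2}$ are bounded by $c\widetilde M\rho^n$ using $|D\phi|\le c/\rho$ and integrating $|x-y|^{-n-sp+sp}$-type kernels, the local part yields the left-hand side of the claimed estimate (restricted to $B_\rho$, where $\phi\equiv 1$) controlled by $c\widetilde M^2\rho^n$ plus the tail contributions.

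For the tail part, I would use $\Phi_\ell(u(x)-u(y))\ge -u_-^{\ell-1}(y)$ for $x\in B_{3\rho/2}$ (where $u(x)\ge 0$) and $y\in\R^n\setminus B_R$, together with $\varphi(x)\le d^{1-p}\phi^q(x)$ and the comparison $|y-x_0|\le c|y-x|$ valid for $x\in\mathrm{supp}\,\phi$ (since $\rho<R/2$). This produces exactly the two tail integrals in the statement, with the weights $\rho^{n+sp}d^{1-p}$ and $\rho^{n+tq}d^{1-q}$ arising from integrating $\phi^q$ over $B_{3\rho/2}$ (giving $\rho^n$) and the kernels $|x-y|^{-n-sp}\approx|y-x_0|^{-n-sp}$, $|x-y|^{-n-tq}\approx|y-x_0|^{-n-tq}$ over that ball (giving $\rho^{sp}$ and $\rho^{tq}$ respectively); the $q-1$-power tail in the $K_{sp}$-term appears because $a(x,y)|x-y|^{-tq}\le \|a\|_{L^\infty}|x-y|^{sp-tq}|x-y|^{-sp}\le c|x-y|^{-sp}$ on the relevant range, folding the $q$-term's near-diagonal-in-$y$ tail into the first integral.

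I expect the main obstacle to be the treatment of the $q$-term in the local part: unlike in \cite{DKP2} one cannot directly mimic the single-power computation, and the interplay of the frozen coefficient $a(x_0,x_0)$, the H\"older remainder $[a]_\alpha|x-y|^\alpha$, and the scaling $|x-y|^{sp-tq}$ must be arranged so that \emph{every} $q$-contribution is either of the same logarithmic form as the $p$-contribution (and thus goes to the left-hand side) or is an error term integrable against the $|x-y|^{-n-sp+\text{(nonneg.)}}$ kernel over a ball of radius $\rho\le 1$. This is precisely the point flagged in the introduction as the reason the same approach does not apply verbatim, and it is resolved by using local boundedness of $u$ to extract the $\widetilde M^2$ factor and the condition $tq\le sp+\alpha$ to keep all exponents of $|x-y|$ nonnegative.
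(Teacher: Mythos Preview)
Your proposal has a genuine gap at the choice of test function. Testing with $\varphi(x)=(u(x)+d)^{1-p}\phi^{q}(x)$ (the DKP choice for a single $p$-phase) does not reproduce the estimate as stated, and the paper's proof avoids this by testing with
\[
\varphi(x)=\frac{\phi^{q}(x)}{g(u(x)+d)},\qquad g(\tau)=\frac{\tau^{p-1}}{\rho^{sp}}+a_{2}\,\frac{\tau^{q-1}}{\rho^{tq}},\quad a_{2}=\sup_{B_{2\rho}\times B_{2\rho}}a,
\]
which already carries both phases \emph{and} the correct $\rho$-scaling. Two concrete places where your scheme fails:

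\textbf{Tail scaling.} With $\varphi=(u+d)^{1-p}\phi^{q}$ one only has $\varphi(x)\le d^{1-p}$, so the $q$-tail over $\mathbb{R}^{n}\setminus B_{R}$ produces at best $c\rho^{n}d^{1-p}\displaystyle\int_{\mathbb{R}^{n}\setminus B_{R}}\frac{u_{-}^{q-1}(y)}{|y-x_{0}|^{n+tq}}\,dy$, not the required $\rho^{n+tq}d^{1-q}$ factor. The missing $\rho^{sp}$ (resp.\ $\rho^{tq}$) in front of the tails comes precisely from $1/g(\bar u(x))\le \rho^{sp}/d^{p-1}$ (resp.\ from the $a_{2}$-term in $g$), which the pure power $(u+d)^{1-p}$ cannot supply. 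Your claimed fix, namely $a(x,y)|x-y|^{-tq}\le \|a\|_{L^{\infty}}|x-y|^{sp-tq}|x-y|^{-sp}\le c|x-y|^{-sp}$, is in the wrong direction: since $sp\le tq$, the factor $|x-y|^{sp-tq}$ is $\ge 1$ for $|x-y|\le 1$. What the paper actually uses is \eqref{a.control}, i.e.\ $a(x,y)\le c|x-y|^{tq-sp}+a_{2}$, which splits the $q$-kernel into an $sp$-part and an $a_{2}$-weighted $tq$-part; the latter then matches the $a_{2}$-term already present in $g$.

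\textbf{Local part.} Your ``key algebraic fact'' is stated for matching exponents, $\Phi_{\ell}$ against $(\cdot)^{1-\ell}$, but with $\varphi=(u+d)^{1-p}$ the $q$-term pairs $\Phi_{q}$ with $(\cdot)^{1-p}$. One can indeed bound this $q$-contribution from above (it has a favourable sign modulo $\phi$-errors), so it does not obstruct a DKP-type bound on $[\log\bar u]_{s,p;B_{\rho}}^{p}$; however, that quantity is not the left-hand side of the lemma, and passing from $\displaystyle\int\!\!\int\frac{|\log|^{p}}{|x-y|^{n+sp}}$ to $\displaystyle\int\!\!\int\frac{|\log|}{|x-y|^{n}}$ via H\"older changes the tail dependence (to a $1/p$-power), again not matching the statement. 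The paper instead argues in two regimes, $\bar u(x)\ge\bar u(y)\ge\tfrac12\bar u(x)$ and $\bar u(x)\ge 2\bar u(y)$, using the monotonicity of $\tau\mapsto(\tau^{p}+a(x,y)\tau^{q}|x-y|^{-(t-s)q})/\tau$ (resp.\ $/\tau^{p-1}$) to compare $H(x,y,\bar u(x)-\bar u(y))/\widetilde H(x,y,\bar u(y))$ directly with $\log(\bar u(x)/\bar u(y))$; the H\"older continuity of $a$ and $tq\le sp+\alpha$ enter through $G(\bar u(y))\le c\widetilde M\,\widetilde H(x,y,\bar u(y))$ (cf.\ \eqref{aa}), which is where the factor $\widetilde M$ appears.

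In short, the paper's argument is not a perturbation of DKP with the same test function; the new test function $1/g(\bar u)$ is the device that makes both the local double-phase algebra and the tail $\rho$-scaling come out right.
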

\begin{proof}
We recall \eqref{def.H}, \eqref{def.h} and \eqref{phiell}, and further denote
\begin{equation*}\begin{aligned}
&\widetilde{H}(x,y,\tau) \coloneqq \frac{\tau^{p}}{\rho^{sp}} + a(x,y)\frac{\tau^{q}}{\rho^{tq}}, \quad \widetilde{h}(x,y,\tau) \coloneqq \frac{\tau^{p-1}}{\rho^{sp}} + a(x,y)\frac{\tau^{q-1}}{\rho^{tq}},\\
&G(\tau) \coloneqq \frac{\tau^{p}}{\rho^{sp}} + a_{2}\frac{\tau^{q}}{\rho^{tq}},
\quad 
 g(\tau) \coloneqq \frac{\tau^{p-1}}{\rho^{sp}} + a_{2}\frac{\tau^{q-1}}{\rho^{tq}},
\end{aligned}\end{equation*}
where $\tau\ge0$ and
\begin{equation*}
a_{2} \coloneqq \sup_{B_{2\rho}\times B_{2\rho}}a(\cdot,\cdot).
\end{equation*}
Let $\phi \in C^{\infty}_{0}(B_{3\rho/2})$ be a cut-off function satisfying $0 \leq \phi \leq 1$, $\phi \equiv 1$ in $B_{\rho}$ and $|D\phi| \le 4/\rho$.
Testing \eqref{weak.formulation} with $\varphi(x) = \phi^{q}(x)/g(u(x)+d)$, we have
\begin{align*}
0 & \leq \int_{B_{2\rho}}\int_{B_{2\rho}}\left[\Phi_{p}(u(x)-u(y))K_{sp}(x,y)\left(\frac{\phi^{q}(x)}{g(u(x)+d)}-\frac{\phi^{q}(y)}{g(u(y)+d)}\right)\right. \\
& \qquad\qquad \qquad +\left.a(x,y)\Phi_{q}(u(x)-u(y))K_{tq}(x,y)\left(\frac{\phi^{q}(x)}{g(u(x)+d)}-\frac{\phi^{q}(y)}{g(u(y)+d)}\right)\right]\,dxdy \\
& \qquad +2\int_{\R^n\setminus B_{2\rho}}\int_{B_{2\rho}}\left[\Phi_{p}(u(x)-u(y))K_{sp}(x,y)\frac{\phi^{q}(x)}{g(u(x)+d)}\right. \\
& \qquad \qquad\qquad \qquad \qquad +\left.a(x,y)\Phi_{q}(u(x)-u(y))K_{tq}(x,y)\frac{\phi^{q}(x)}{g(u(x)+d)}\right]\,dxdy \\
& \eqqcolon I_{1} + I_{2}.
\end{align*}
Moreover, in $I_1$ the integrand with respect to the measure $\frac{dxdy}{|x-y|^{n}}$ is denoted by $F(x,y)$, that is,
\begin{equation*}
I_1 = \int_{B_{2\rho}}\int_{B_{2\rho}} F(x,y)\, \frac{dxdy}{|x-y|^{n}},
\end{equation*}
\begin{align*}
F(x,y) & \coloneqq \Phi_{p}(u(x)-u(y))K_{sp}(x,y)|x-y|^{n}\left(\frac{\phi^{q}(x)}{g(u(x)+d)}-\frac{\phi^{q}(y)}{g(u(y)+d)}\right) \\
& \qquad + a(x,y)\Phi_{q}(u(x)-u(y))K_{tq}(x,y)|x-y|^{n}\left(\frac{\phi^{q}(x)}{g(u(x)+d)}-\frac{\phi^{q}(y)}{g(u(y)+d)}\right).
\end{align*}
We also denote $\bar{u}(x) \coloneqq u(x)+d$. Next, we estimate $I_1$ and $I_2$ separately. The remaining part of the proof is divided into four steps.

\textit{Step 1: Estimate of $F(x,y)$ when $\bar{u}(x) \geq \bar{u}(y) \geq \frac{1}{2}\bar{u}(x)$.}  We first observe that 
\begin{align*}
\frac{\phi^{q}(x)}{g(\bar{u}(x))}-\frac{\phi^{q}(y)}{g(\bar{u}(y))}  & = \frac{\phi^{q}(x)-\phi^{q}(y)}{g(\bar{u}(y))} + \phi^{q}(x)\left(\frac{1}{g(\bar{u}(x))}-\frac{1}{g(\bar{u}(y))}\right) \\
& \leq \frac{q\phi^{q-1}(x)|\phi(x)-\phi(y)|}{g(\bar{u}(y))} + \phi^{q}(x)\int_{0}^{1}\frac{d}{d\sigma}\left(\frac{1}{g(\sigma \bar{u}(x)+(1-\sigma)\bar{u}(y))}\right)\,d\sigma.
\end{align*}
To estimate the last integral, we first observe that
\begin{equation*}
\frac{d}{d\sigma}\left(\frac{1}{g(\sigma \bar{u}(x)+(1-\sigma)\bar{u}(y))}\right) = -\frac{g'(\sigma \bar{u}(x)+(1-\sigma)\bar{u}(y))}{g^{2}(\sigma\bar{u}(x)+(1-\sigma)\bar{u}(y))}(\bar{u}(x)-\bar{u}(y)),
\end{equation*}
where a direct calculation shows
\begin{equation*}
\frac{g'(\tau)}{g^{2}(\tau)} = \frac{(p-1)\dfrac{\tau^{p-2}}{\tau^{sp}}+(q-1)a_{2}\dfrac{\tau^{q-2}}{\rho^{tq}}}{\left(\dfrac{\tau^{p-1}}{\rho^{sp}}+a_{2}\dfrac{\tau^{q-1}}{\rho^{tq}}\right)^{2}}, 
\  \text{ hence }\ \frac{p-1}{G(\tau)} \leq \frac{g'(\tau)}{g^{2}(\tau)} \leq \frac{q-1}{G(\tau)}.
\end{equation*}
Thus we have
\begin{align*}
\frac{\phi^{q}(x)}{g(\bar{u}(x))}-\frac{\phi^{q}(y)}{g(\bar{u}(y))} & \leq \frac{q\phi^{q-1}(x)|\phi(x)-\phi(y)|}{g(\bar{u}(y))} - (p-1)\frac{\phi^{q}(x)(\bar{u}(x)-\bar{u}(y))}{G(\bar{u}(x))} \\
& \leq \frac{q\phi^{q-1}(x)|\phi(x)-\phi(y)|}{g(\bar{u}(y))} -\frac{p-1}{2^{q}}\frac{\phi^{q}(x)(\bar{u}(x)-\bar{u}(y))}{G(\bar{u}(y))}.
\end{align*}
Applying this inequality to $F(x,y)$ and using \eqref{kernel.growth}, we have  
\begin{equation}\label{bb}
\begin{aligned}
F(x,y) & \leq \Lambda q \frac{h(x,y,\bar{u}(x)-\bar{u}(y))\phi^{q-1}(x)|\phi(x)-\phi(y)|\bar{u}(y)}{G(\bar{u}(y))} \\
&\qquad - \frac{p-1}{2^{q}\Lambda}\frac{H(x,y,\bar{u}(x)-\bar{u}(y))\phi^{q}(x)}{G(\bar{u}(y))}.
\end{aligned}\end{equation}
Let us now estimate the first term in the right-hand side of \eqref{bb}. Applying Young's inequality to the numerator, for any small $\varepsilon>0$ we obtain
\begin{align*}
\lefteqn{ h(x,y,\bar{u}(x)-\bar{u}(y))\phi^{q-1}(x)|\phi(x)-\phi(y)|\bar{u}(y) } \\
& \leq \frac{\varepsilon(\bar{u}(x)-\bar{u}(y))^{p}\phi^{(q-1)p'}(x) + c(\varepsilon)|\phi(x)-\phi(y)|^{p}\bar{u}^{p}(y)}{|x-y|^{sp}} \\
& \quad +  a(x,y)\frac{\varepsilon(\bar{u}(x)-\bar{u}(y))^{q}\phi^{q}(x)+c(\varepsilon)|\phi(x)-\phi(y)|^{q}\bar{u}^{q}(y)}{|x-y|^{tq}} \\
& \leq \varepsilon\phi^{q}(x)H(x,y,\bar{u}(x)-\bar{u}(y)) \\
& \quad +  c(\varepsilon)\left(\frac{|\phi(x)-\phi(y)|^{p}\rho^{sp}}{|x-y|^{sp}}\frac{\bar{u}^{p}(y)}{\rho^{sp}} + a_{2}\frac{|\phi(x)-\phi(y)|^{q}\rho^{tq}}{|x-y|^{tq}}\frac{\bar{u}^{q}(y)}{\rho^{tq}}\right),
\end{align*}
where for the last inequality we have used that $x,y \in B_{2\rho}$. It then follows that
\begin{align*}
\lefteqn{ \frac{h(x,y,\bar{u}(x)-\bar{u}(y))\phi^{q-1}(x)|\phi(x)-\phi(y)|\bar{u}(y)}{G(\bar{u}(y))} }\\
& \leq \varepsilon\phi^{q}(x)\frac{H(x,y,\bar{u}(x)-\bar{u}(y))}{G(\bar{u}(y))} + c(\varepsilon)\left(\frac{\rho^{sp}}{|x-y|^{sp}}|\phi(x)-\phi(y)|^{p}+\frac{\rho^{tq}}{|x-y|^{tq}}|\phi(x)-\phi(y)|^{q}\right)
\end{align*}
for any small $\varepsilon > 0$.
Putting this into \eqref{bb} and choosing 
$$\varepsilon = \frac{p-1}{2^{q+1}q\Lambda},$$
we have
\begin{align*}
F(x,y) &\leq c\frac{|x-y|^{(1-s)p}}{\rho^{(1-s)p}}+c\frac{|x-y|^{(1-t)q}}{\rho^{(1-t)q}} -\frac{p-1}{2^{q+1}\Lambda}\frac{\phi^{q}(x)H(x,y,\bar{u}(x)-\bar{u}(y))}{G(\bar{u}(y))}.
\end{align*}
In order to estimate the last term in the above display, we note that 
\begin{equation*}
a_{2} = a_{2}-a(x,y) + a(x,y) \leq [a]_{\alpha}8^{\alpha}\rho^{\alpha} + a(x,y), \quad  x,y\in B_{2\rho},
\end{equation*} 
to discover
\begin{equation}\label{aa}
\begin{aligned}
G(\bar{u}(y)) & = \frac{\bar{u}^{p}(y)}{\rho^{sp}} + a_{2}\frac{\bar{u}^{q}(y)}{\rho^{tq}} \\
& \leq \frac{\bar{u}^{p}(y)}{\rho^{sp}} + [a]_{\alpha}8^{\alpha}\rho^{\alpha-tq+sp}\lVert \bar{u} \rVert_{L^{\infty}(\Omega')}^{q-p}\frac{\bar{u}^{p}(y)}{\rho^{sp}} + a(x,y)\frac{\bar{u}^{q}(y)}{\rho^{tq}} \\
& \leq (1+8^{\alpha}[a]_{\alpha})\left(1+(\lVert u \rVert_{L^{\infty}(\Omega')} + d)^{q-p} \right)\widetilde{H}(x,y,\bar{u}(y)),
\end{aligned}
\end{equation}
where we have used the inequality in \eqref{assumption.hol} with $\rho\le 1$. Then it follows that
\begin{equation*}
-\frac{\phi^{q}(x)H(x,y,\bar{u}(x)-\bar{u}(y))}{G(\bar{u}(y))} \leq -\frac{1}{c \widetilde M}\frac{\phi^{q}(x) H(x,y,\bar{u}(x)-\bar{u}(y))}{\widetilde{H}(x,y,\bar{u}(y))}
\end{equation*}
and therefore
\begin{equation}\label{midest.1st}
F(x,y) \leq c\frac{|x-y|^{(1-s)p}}{\rho^{(1-s)p}} + c\frac{|x-y|^{(1-t)q}}{\rho^{(1-t)q}} - \frac{1}{c \widetilde M }\frac{\phi^{q}(x) H(x,y,\bar{u}(x)-\bar{u}(y))}{\widetilde{H}(x,y,\bar{u}(y))}.
\end{equation}
We now need to 
derive an estimate for $\log \bar{u}$.
Observe 
\begin{align*}
\log \bar{u}(x) - \log \bar{u}(y)
 = \int_{0}^{1}\frac{\bar{u}(x)-\bar{u}(y)}{\sigma \bar{u}(x) + (1-\sigma)\bar{u}(y)}\,d\sigma  \leq \frac{\bar{u}(x)-\bar{u}(y)}{\bar{u}(y)}  = \frac{\dfrac{\bar{u}(x)-\bar{u}(y)}{|x-y|^{s}}}{\dfrac{\bar{u}(y)}{\rho^{s}}} \frac{|x-y|^{s}}{\rho^{s}}
\end{align*}
and use the monotonicity of the function
\begin{equation*}
\tau \mapsto \frac{\tau^{p}+a(x,y)\tau^{q}|x-y|^{-(t-s)q}}{\tau}, \quad \tau \ge 0,
\end{equation*}
to obtain
\begin{align*}
\lefteqn{ \log \bar{u}(x) - \log \bar{u}(y) }\\
& \leq \left[\frac{\left(\dfrac{\bar{u}(x)-\bar{u}(y)}{|x-y|^{s}}\right)^{p}+a(x,y)\left(\dfrac{\bar{u}(x)-\bar{u}(y)}{|x-y|^{s}}\right)^{q}\dfrac{1}{|x-y|^{(t-s)q}}}{\left(\dfrac{\bar{u}(y)}{\rho^{s}}\right)^{p}+a(x,y)\left(\dfrac{\bar{u}(y)}{\rho^{s}}\right)^{q}\dfrac{1}{|x-y|^{(t-s)q}}}+1\right]\dfrac{|x-y|^{s}}{\rho^{s}} \\
& \leq c\frac{H(x,y,\bar{u}(x)-\bar{u}(y))}{\widetilde{H}(x,y,\bar{u}(y))} + \frac{|x-y|^{s}}{\rho^{s}}.
\end{align*}
For the last inequality, we have used the fact that $|x-y| \leq 2\rho$. Finally, inserting this into \eqref{midest.1st}, we obtain
\begin{equation*}
F(x,y) \leq c\frac{|x-y|^{(1-s)p}}{\rho^{(1-s)p}} + c\frac{|x-y|^{(1-t)q}}{\rho^{(1-t)q}}+ c\frac{|x-y|^{s}}{\rho^{s}}  - \frac{\phi^{q}(x)}{c \widetilde M }\log \left(\frac{\bar u(x)}{\bar u(y)}\right).
\end{equation*}

\textit{Step 2: Estimate of $F(x,y)$ when $\bar{u}(x) \geq 2 \bar{u}(y)$.} We first observe from the second inequality in Lemma~\ref{size.comp} with $\varepsilon=\frac{2^{p-1}-1}{2^{p}}$ and $\bar{u}(x) \geq 2 \bar{u}(y)$ that 
\begin{align*}
\frac{\phi^{q}(x)}{g(\bar{u}(x))}-\frac{\phi^{q}(y)}{g(\bar{u}(y))} 
& = \frac{\phi^{q}(x)-\phi^{q}(y)}{g(\bar{u}(x))} + \phi^{q}(y)\left(\frac{1}{g(\bar{u}(x))} - \frac{1}{g(\bar{u}(y))}\right) \\
& \leq \frac{\phi^{q}(x)-\phi^{q}(y)}{g(\bar{u}(x))} + \phi^{q}(y)\left(\frac{1}{g(2\bar{u}(y))} - \frac{1}{g(\bar{u}(y))}\right) \\
& \leq \frac{\phi^{q}(x)-\phi^{q}(y)}{g(\bar{u}(x))} -\left(1-\frac{1}{2^{p-1}}\right) \frac{\phi^{q}(y)}{g(\bar{u}(y))} \\
& \leq \frac{\varepsilon\phi^{q}(y) + c(\varepsilon)|\phi(x)-\phi(y)|^{q}}{g(\bar{u}(x))} - \left(1-\frac{1}{2^{p-1}}\right) \frac{\phi^{q}(y)}{g(\bar{u}(y))} \\
& \leq c\frac{|\phi(x)-\phi(y)|^{q}}{g(\bar{u}(x))} - \frac{2^{p-1}-1}{2^{p}} \frac{\phi^{q}(y)}{g(\bar{u}(y))}.
\end{align*}
This implies
\begin{equation*}
F(x,y)  \leq c\frac{h(x,y,\bar{u}(x)-\bar{u}(y))|\phi(x)-\phi(y)|^{q}}{g(\bar{u}(x))} -\frac{1}{c}\frac{h(x,y,\bar{u}(x)-\bar{u}(y))\phi^{q}(y)}{g(\bar{u}(y))}
\end{equation*}
Estimating the right-hand side similarly as in \eqref{aa}, we find
\begin{align*}
F(x,y)  \leq c\frac{h(x,y,\bar{u}(x)-\bar{u}(y))|\phi(x)-\phi(y)|^{q}}{g(\bar{u}(x))}  - \frac{1}{c\widetilde M} \frac{h(x,y,\bar{u}(x)-\bar{u}(y))}{\widetilde{h}(x,y,\bar{u}(y))}.
\end{align*}
The first term in the right-hand side is estimated as
\begin{align*}
\lefteqn{ \frac{h(x,y,\bar{u}(x)-\bar{u}(y))|\phi(x)-\phi(y)|^{q}}{g(\bar{u}(x))} } \\
& = \frac{\dfrac{\rho^{sp}}{|x-y|^{sp}}\dfrac{|\bar{u}(x)-\bar{u}(y)|^{p-1}}{\rho^{sp}} + a(x,y)\dfrac{\rho^{tq}}{|x-y|^{tq}}\dfrac{|\bar{u}(x)-\bar{u}(y)|^{q-1}}{\rho^{tq}}}{\dfrac{|\bar{u}(x)-\bar{u}(y)|^{p-1}}{\rho^{sp}} + a_{2}\dfrac{|\bar{u}(x)-\bar{u}(y)|^{q-1}}{\rho^{tq}}}|\phi(x)-\phi(y)|^{q} \\
& \leq \left(\frac{\rho^{sp}}{|x-y|^{sp}} + \frac{\rho^{tq}}{|x-y|^{tq}}\right)\frac{|x-y|^{q}}{\rho^{q}},
\end{align*}
hence we have
\begin{equation}\label{midest.2nd}
F(x,y) \leq c\frac{|x-y|^{q-sp}}{\rho^{q-sp}} + c\frac{|x-y|^{(1-t)q}}{\rho^{(1-t)q}} - \frac{1}{c\widetilde M}\frac{h(x,y,\bar{u}(x)-\bar{u}(y))}{\widetilde{h}(x,y,\bar{u}(y))}.
\end{equation}
Furthermore, since we have in this case
\begin{align*}
\log \bar{u}(x) - \log \bar{u}(y) \leq \log \left(\frac{2(\bar{u}(x)-\bar{u}(y))}{\bar{u}(y)}\right) & \le c \left(\frac{2(\bar{u}(x)-\bar{u}(y))}{\bar{u}(y)}\right)^{p-1}\\
 &\le c \frac{\left(\dfrac{\bar{u}(x)-\bar{u}(y)}{|x-y|^{s}}\right)^{p-1}}{\left(\dfrac{\bar{u}(y)}{\rho^{s}}\right)^{p-1}}\frac{|x-y|^{s(p-1)}}{\rho^{s(p-1)}}.
\end{align*}
We then use the monotonicity of the function 
\begin{equation*}
\tau \mapsto \frac{\tau^{p-1} + a(x,y)\tau^{q-1}|x-y|^{-(t-s)q}}{\tau^{p-1}}, \quad \tau\ge 0, 
\end{equation*}
and use the fact that $|x-y| \leq 2\rho$, to have
\begin{align*}
\lefteqn{\log \bar{u}(x) - \log \bar{u}(y)}\\
 & \le c \left[\frac{\left(\dfrac{\bar{u}(x)-\bar{u}(y)}{|x-y|^{s}}\right)^{p-1} + a(x,y)\left(\dfrac{\bar{u}(x)-\bar{u}(y)}{|x-y|^{s}}\right)^{q-1}\dfrac{1}{|x-y|^{(t-s)q}}}{\left(\dfrac{\bar{u}(y)}{\rho^{s}}\right)^{p-1} + a(x,y)\left(\dfrac{\bar{u}(y)}{\rho^{s}}\right)^{q-1}\dfrac{1}{|x-y|^{(t-s)q}}} + 1 \right]\frac{|x-y|^{s(p-1)}}{\rho^{s(p-1)}} \\
& \le c \frac{h(x,y,\bar{u}(x)-\bar{u}(y))}{\widetilde{h}(x,y,\bar{u}(y))} + c\frac{|x-y|^{s(p-1)}}{\rho^{s(p-1)}}.
\end{align*}
Finally, inserting this into \eqref{midest.2nd}, we obtain
\begin{equation*}
F(x,y) \leq c\frac{|x-y|^{q-sp}}{\rho^{q-sp}} + c\frac{|x-y|^{(1-t)q}}{\rho^{(1-t)q}} + c \frac{|x-y|^{s(p-1)}}{\rho^{s(p-1)}} - \frac{1}{c\widetilde M} \log\left(\frac{\bar{u}(x)}{\bar{u}(y)}\right).
\end{equation*}

\textit{Step 3: Estimate of $I_1$.} From Step 1 and Step 2, we have that 
\begin{align*}
F(x,y) &\leq c\frac{|x-y|^{(1-s)p}}{\rho^{(1-s)p}} + c\frac{|x-y|^{(1-t)q}}{\rho^{(1-t)q}} + c\frac{|x-y|^{s}}{\rho^{s}} + c\frac{|x-y|^{s(p-1)}}{\rho^{s(p-1)}} \\
& \qquad - \frac{(\min\{\phi(x),\phi(y)\})^{q}}{c\widetilde M}\left|\log\left(\frac{\bar{u}(x)}{\bar{u}(y)}\right)\right|,
\end{align*}
when $\bar{u}(x) \ge \bar{u}(y)$. Moreover, by the symmetry of the above estimate for $x$ and $y$, the same estimate still holds when  $\bar{u}(x) < \bar{u}(y)$.  
Therefore, $I_{1}$ is finally estimated as follows:
\begin{equation}\label{est.I1}\begin{aligned}
I_{1} & \leq -\frac{1}{c \widetilde M}\int_{B_{\rho}}\int_{B_{\rho}}\left|\log\left(\frac{\bar{u}(x)}{\bar{u}(y)}\right)\right| \frac{dydx}{|x-y|^{n}} \\
& \quad + c\int_{B_{2\rho}}\int_{B_{4\rho}(x)}\left(\frac{|x-y|^{(1-s)p}}{\rho^{(1-s)p}}+\frac{|x-y|^{(1-t)q}}{\rho^{(1-t)q}}+\frac{|x-y|^{s}}{\rho^{s}}+\frac{|x-y|^{s(p-1)}}{\rho^{s(p-1)}}\right)\frac{dydx}{|x-y|^{n}} \\
& \leq -\frac{1}{c\widetilde M}\int_{B_{\rho}}\int_{B_{\rho}}\left|\log\left(\frac{\bar{u}(x)}{\bar{u}(y)}\right)\right| \frac{dydx}{|x-y|^{n}} + c\rho^{n}.
\end{aligned}\end{equation}

\textit{Step 4: Estimate of $I_{2}$ and Conclusion.} We start with the following observation:
\begin{itemize}
\item[(i)] If $y \in B_{R} \setminus B_{2\rho}$, then $u(y) \geq 0$ and $u(x)-u(y) \leq u(x)$;

\item[(ii)] If $y \in \mathbb{R}^{n} \setminus B_{R}$, then $(u(x)-u(y))_{+} \leq (u(x)+u_{-}(y))_{+} = u(x) + u_-(y)$.
\end{itemize}
Using this and the fact that $\supp\,\phi \subset B_{3\rho/2}$, we write
\begin{equation}\label{est.I2}
I_{2}  \leq 2\int_{B_{3\rho/2}}\int_{\mathbb{R}^{n}\setminus B_{2\rho}}\frac{h(x,y,u(x)+d)}{g(u(x)+d)}\,\frac{dydx}{|x-y|^{n}}  + 2\int_{B_{3\rho/2}}\int_{\mathbb{R}^{n}\setminus B_{R}}\frac{h(x,y,u_-(y))}{g(u(x)+d)}\,\frac{dydx}{|x-y|^{n}}.
\end{equation}
Since we are considering integrals over the complement of balls, we cannot directly compare $a_{2}$ and $a(x,y)$ there. In order to overcome this difficulty, we observe that \eqref{a.bound} and \eqref{assumption.hol} imply
\begin{equation} \label{a.control} 
\begin{aligned}
a(x,y)  \leq a(x,y) - a(x,x) + a_{2} & \leq \left(a(x,y)-a(x,x)\right)^{\frac{tq-sp}{\alpha}}\left(2\|a\|_{L^{\infty}}\right)^{1-\frac{tq-sp}{\alpha}} + a_{2} \\
& \le c |x-y|^{tq-sp} + a_{2},
\end{aligned}\end{equation}
whenever $x \in B_{2\rho}$ and $y \in \mathbb{R}^{n}$.

For the first integral in \eqref{est.I2}, 
we use \eqref{a.control} and the fact that  $|x-y| > \frac{\rho}{2}$ for $x \in B_{3\rho/2}$ and $y \in \mathbb{R}^{n}\setminus B_{2\rho}$ to find 
\begin{align*}
\frac{h(x,y,u(x)+d)}{g(u(x)+d)} \le c\frac{\dfrac{\bar{u}^{p-1}(x)}{|x-y|^{sp}}+a(x,y)\dfrac{\bar{u}^{q-1}(x)}{|x-y|^{tq}}}{\dfrac{\bar{u}^{p-1}(x)}{|x-y|^{sp}}+a_{2}\dfrac{\bar{u}^{q-1}(x)}{|x-y|^{tq}}}
 \le c\frac{\dfrac{\bar{u}^{p-1}(x) + \bar{u}^{q-1}(x)}{|x-y|^{sp}}+a_{2}\dfrac{\bar{u}^{q-1}(x)}{|x-y|^{tq}}}{\dfrac{\bar{u}^{p-1}(x)}{|x-y|^{sp}}+a_{2}\dfrac{\bar{u}^{q-1}(x)}{|x-y|^{tq}}} \le c \widetilde M,
\end{align*}
which gives
\begin{equation}\label{first}
\int_{B_{3\rho/2}}\int_{\mathbb{R}^{n}\setminus B_{2\rho}}\frac{h(x,y,u(x)+d)}{g(u(x)+d)}\,\frac{dydx}{|x-y|^{n}}  \le c \widetilde M \rho^{n} .
\end{equation}
For the second integral in \eqref{est.I2}, we use \eqref{a.control} and the fact that  $\frac{|y-x_{0}|}{|y-x|} \leq 1+\frac{|x-x_{0}|}{|y-x|} \leq 1 + \frac{3\rho/2}{\rho/2} = 4$ for $x\in B_{3\rho/2}$ and $y\in \R^n\setminus B_{2\rho}$ to find
\begin{equation*}
\begin{aligned}
\frac{h(x,y,u_{-}(y))}{g(u(x)+d)} \le \frac{\dfrac{u_{-}^{p-1}(y)}{|x-y|^{sp}}+a(x,y)\dfrac{u_{-}^{q-1}(y)}{|x-y|^{tq}}}{\dfrac{d^{p-1}}{\rho^{sp}}+a_{2}\dfrac{d^{q-1}}{\rho^{tq}}} 
& \le c\frac{\dfrac{u_{-}^{p-1}(y)}{|x-y|^{sp}}+|x-y|^{tq-sp}\dfrac{u_{-}^{q-1}(y)}{|x-y|^{tq}}+a_{2}\dfrac{u_{-}^{q-1}(y)}{|x-y|^{tq}}}{\dfrac{d^{p-1}}{\rho^{sp}}+a_{2}\dfrac{d^{q-1}}{\rho^{tq}}} \\
& \le c\rho^{sp}d^{1-p}\frac{u_{-}^{p-1}(y)+ u_{-}^{q-1}(y)}{|y-x_{0}|^{sp}} + c\rho^{tq}d^{1-q}\frac{u_{-}^{q-1}(y)}{|y-x_{0}|^{tq}}.
\end{aligned}\end{equation*}
Consequently, we obtain 
\begin{equation}\label{second}
\begin{aligned}
\lefteqn{ \int_{B_{3\rho/2}}\int_{\mathbb{R}^{n}\setminus B_{R}}\frac{h(x,y,u_{-}(y))}{g(u(x)+d)}\,\frac{dydx}{|x-y|^{n}} } \\
& \leq c\rho^{n+sp}d^{1-p}\int_{\mathbb{R}^{n}\setminus B_{R}}\frac{u_{-}^{p-1}(y)+u_{-}^{q-1}(y)}{|y-x_{0}|^{n+sp}}\,dy + c\rho^{n+tq}d^{1-q}\int_{\mathbb{R}^{n}\setminus B_{R}}\frac{u_{-}^{q-1}(y)}{|y-x_{0}|^{n+tq}}\,dy.
\end{aligned}
\end{equation}
Combining \eqref{est.I1}, \eqref{est.I2}, \eqref{first} and \eqref{second}, we finally get the desired estimate.
\end{proof}

The preceding lemma directly implies the following corollary.

\begin{corollary}\label{log.cor}
Under the same assumptions as in Lemma~\ref{log.lemma}, let $d,\zeta > 0$, $\xi>1$ and define
\begin{equation*}
v \coloneqq \min\{(\log(\zeta+d)-\log(u+d))_{+}, \log\xi\}.
\end{equation*}
Then we have
\begin{equation}\label{log.est}
\begin{aligned}
\mean{B_{\rho}}|v-(v)_{B_{\rho}}|\,dx
& \leq c \widetilde{M}^{2}  \left(1+ \rho^{sp}d^{1-p}\int_{\mathbb{R}^{n}\setminus B_{R}}\frac{u_{-}^{p-1}(y) + u_{-}^{q-1}(y)}{|y-x_{0}|^{n+sp}}\,dy\right.\\
&\hspace{4cm} \left. +\rho^{tq}d^{1-q}\int_{\mathbb{R}^{n}\setminus B_{R}}\frac{u_{-}^{q-1}(y)}{|y-x_{0}|^{n+tq}}\,dy\right) 
\end{aligned}
\end{equation}
for some $c\equiv c(\data_1)>0$, where $\widetilde{M} = 1+(\lVert u \rVert_{L^{\infty}(\Omega')}+d)^{q-p}$.
\end{corollary}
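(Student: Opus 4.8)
The corollary is an essentially immediate consequence of Lemma~\ref{log.lemma}, so the plan is short and most of the work is already done in that lemma. The first step is to observe that $v$ is obtained from $\log(u+d)$ by composing with $1$-Lipschitz maps: setting $w\coloneqq \log(\zeta+d)-\log(u+d)$, the functions $\tau\mapsto \tau_{+}$ and $\tau\mapsto \min\{\tau,\log\xi\}$ are $1$-Lipschitz and $\log(\zeta+d)$ is a constant, whence
$$|v(x)-v(y)| \;\le\; |w(x)-w(y)| \;=\; \left|\log\left(\frac{u(x)+d}{u(y)+d}\right)\right| \qquad\text{for a.e. }x,y.$$
Since $u\ge 0$ in $B_{R}$ we have $u+d\ge d>0$ there, and $u\in L^{\infty}(\Omega')$ with $B_{R}\subset\Omega'$, so the right-hand side is finite a.e.\ on $B_{R}\times B_{R}$; moreover $v$ is bounded by $\log\xi$, hence integrable on $B_{\rho}$, so $(v)_{B_{\rho}}$ is well defined.

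The second step is the elementary $L^{1}$ Poincar\'e bound on a ball, combined with the trivial inequality $1\le (2\rho)^{n}|x-y|^{-n}$ valid for $x,y\in B_{\rho}$: by Jensen's inequality and Fubini,
$$\mean{B_{\rho}}|v-(v)_{B_{\rho}}|\,dx \;\le\; \frac{1}{|B_{\rho}|^{2}}\int_{B_{\rho}}\int_{B_{\rho}}|v(x)-v(y)|\,dx\,dy \;\le\; \frac{(2\rho)^{n}}{|B_{\rho}|^{2}}\int_{B_{\rho}}\int_{B_{\rho}}\frac{|v(x)-v(y)|}{|x-y|^{n}}\,dx\,dy,$$
and inserting the Lipschitz estimate above yields
$$\mean{B_{\rho}}|v-(v)_{B_{\rho}}|\,dx \;\le\; \frac{c}{\rho^{n}}\int_{B_{\rho}}\int_{B_{\rho}}\left|\log\left(\frac{u(x)+d}{u(y)+d}\right)\right|\frac{dx\,dy}{|x-y|^{n}}$$
for a constant $c\equiv c(n)$.

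Finally I would apply Lemma~\ref{log.lemma} verbatim --- its hypotheses ($u$ a weak supersolution, $u\in L^{\infty}(\Omega')$, $u\ge 0$ in $B_{R}$, $0<\rho<R/2$, $d>0$) are precisely those standing here --- to bound the double integral on the right by $c\widetilde{M}^{2}$ times the three-term bracket appearing in that lemma. Dividing through by $\rho^{n}$ cancels the common factor $\rho^{n}$ and turns $\rho^{n+sp}$ and $\rho^{n+tq}$ into $\rho^{sp}$ and $\rho^{tq}$, which is exactly \eqref{log.est}, the constant now depending on $\data_{1}$. There is no genuine obstacle: all the analytic content --- the choice of the logarithmic test function $\phi^{q}/g(u+d)$, the split into the near-diagonal regimes and the estimate of the nonlocal tail term $I_{2}$ --- is already contained in Lemma~\ref{log.lemma}; the remaining ingredients are merely the $1$-Lipschitz property of truncations and the $L^{1}$ Poincar\'e inequality on a ball with the kernel inserted via $|x-y|\le 2\rho$.
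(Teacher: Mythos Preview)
Your proposal is correct and follows essentially the same approach as the paper: observe that $v$ is a truncation of $\log(u+d)$ (hence $|v(x)-v(y)|\le|\log(u(x)+d)-\log(u(y)+d)|$), apply the $L^{1}$ Poincar\'e inequality on $B_{\rho}$, insert the kernel via $|x-y|\le 2\rho$, and then invoke Lemma~\ref{log.lemma}. The paper's proof is simply a terser version of exactly these steps.
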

\begin{proof}
It suffices to observe that
\begin{align*}
\mean{B_{\rho}}|v-(v)_{B_{\rho}}|\,dx 
& \leq \mean{B_{\rho}}\mean{B_{\rho}}|v(x)-v(y)|\,dydx \\
& \leq c\rho^{-n}\int_{B_{\rho}}\int_{B_{\rho}}\frac{|\log(u(x)+d)-\log(u(y)+d)|}{|x-y|^{n}}\,dydx,
\end{align*}
as $v$ is a truncation of $\log(u+d)$. Now Lemma \ref{log.lemma} gives the desired result.
\end{proof}

\subsection{H\"older continuity: Proof of Theorem \ref{thm.hol}}
We are now in a position to prove Theorem \ref{thm.hol}. 
We first recall that $\Omega' \Subset \Omega$ has been fixed in the beginning of the section and the constant $M$ was defined in \eqref{def.M}.
We then fix a ball $B_{2r} \equiv B_{2r}(x_{0}) \subset \Omega'$. 
Let $\sigma \in (0,1/4]$ be a constant depending only on $\data_1$ and $\|u\|_{L^\infty(\Omega')}$ that satisfies 
\begin{equation}\label{sigma}
\sigma \le \min\left\{\frac{1}{4},2^{-\frac{2}{sp}}, 6^{-\frac{4(q-1)}{sq}}, \exp\left(-\frac{c_{*}M^{3}}{\nu_{*}}\right)\right\},
\end{equation}
where the large constant $c_{*} \equiv c_{*}(\data_{1})>0$ and the small one $\nu_{*} \equiv \nu_{*}(\data_{1},\|u\|_{L^{\infty}(\Omega')})>0$ are to be determined in \eqref{levelset.density} and \eqref{a0.small}, respectively, and then choose $\gamma\in(0,1)$ depending only on $\data_1$ and $\|u\|_{L^\infty(\Omega')}$ satisfying
\begin{equation}\label{gamma}
\gamma \le \min\left\{\log_{\sigma}\left(\frac{1}{2}\right),\frac{sp}{2(p-1)},\frac{tq}{2(q-1)},\log_{\sigma}\left(1-\sigma^{\frac{sq}{2(q-1)}}\right)\right\}.
\end{equation}
We define
\begin{equation}\label{K0}\begin{aligned}
\frac{1}{2}K_{0} & \coloneqq \sup_{B_{r}}|u| + \left[ r^{sp}\int_{\mathbb{R}^{n}\setminus B_{r}}\frac{|u(x)|^{p-1} + |u(x)|^{q-1}}{|x-x_{0}|^{n+sp}}\,dx \right]^{\frac{1}{p-1}} \\
& \hspace{5cm} + \left[ r^{tq}\int_{\mathbb{R}^{n}\setminus B_{r}}\frac{|u(x)|^{q-1}}{|x-x_{0}|^{n+tq}}\,dx \right]^{\frac{1}{q-1}}
\end{aligned}\end{equation}
and, for $j \in \mathbb{N}\cup\{0\}$, we write
\begin{equation*}
r_{j} \coloneqq \sigma^{j}r, \quad B_{j} \coloneqq B_{r_{j}}(x_{0})
\quad\text{and}\quad 
K_{j} \coloneqq \sigma^{\gamma j}K_{0}.
\end{equation*}

Now, 
we are going prove the following oscillation lemma, which implies $u\in C^{0,\gamma}(B_{r})$.
\begin{lemma}
Under the assumptions of Theorem \ref{thm.hol}, 
let $u$ be a weak solution to \eqref{main.eq}. Then we have for every $j\in \mathbb{N}\cup\{0\}$
\begin{equation}\label{osc.est}
\omega(r_{j}) \coloneqq \osc_{B_{j}}u \leq K_{j}.
\end{equation}
\end{lemma}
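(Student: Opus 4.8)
The proof proceeds by induction on $j$, following the classical De Giorgi–Nash–Moser oscillation-decay scheme adapted to the nonlocal double phase setting, in the spirit of \cite{DKP2} and \cite{ok2021local}. The base case $j=0$ holds trivially by the definition of $K_0$ in \eqref{K0}, since $\osc_{B_0}u \le 2\sup_{B_r}|u| \le K_0$. For the inductive step, assume \eqref{osc.est} holds for all indices up to $j$; we must show $\omega(r_{j+1}) \le K_{j+1} = \sigma^\gamma K_j$. The strategy is to work in the ball $B_{j+1}$ (or an intermediate ball like $B_{r_j/2}$) and show that, after replacing $u$ by $u - \inf_{B_j}u$ if necessary, either $u$ is pointwise close to its supremum on a large-measure portion of the ball, or it is close to its infimum there; combined with the tail control coming from the inductive hypothesis on \emph{all} the previous balls (which bounds the nonlocal tail $\int_{\R^n\setminus B_j}\frac{u_-^{p-1}+u_-^{q-1}}{|x-x_0|^{n+sp}}\,dx$ and the analogous $tq$-tail in terms of $K_j$-type quantities via a geometric-series argument), one deduces the oscillation reduction.

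The key analytic inputs are: the Caccioppoli estimate of Lemma~\ref{caccioppoli.est} (valid without the restriction \eqref{assumption.bdd} here by Remark~\ref{rmk.caccioppoli}, since $u$ is locally bounded), which together with the Sobolev–Poincaré inequality of Lemma~\ref{SP} and the algebraic Lemma~\ref{ineq2} yields a De Giorgi-type iteration showing that if $u$ is below its supremum on a sufficiently large fraction of $B_j$ then $u$ is strictly below its supremum on $B_{j+1}$ (this is the ``levelset density'' step referenced by \eqref{levelset.density}); and the logarithmic estimate of Corollary~\ref{log.cor}, applied with a suitable choice of $\zeta$, $d$, and $\xi$ (with $d$ comparable to $\sigma^{\gamma j}K_j$ or a similar scale, and $\xi$ a fixed power of $1/\sigma$), which upgrades a ``measure-theoretic smallness'' of the set where $u$ is far from its infimum into a genuine pointwise estimate via an $L^1$-to-measure argument (the ``shrinking lemma'' / measure-to-pointwise transfer). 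The constant $\nu_*$ in \eqref{a0.small} is precisely the density threshold at which the De Giorgi iteration converges, and the choice of $\sigma$ in \eqref{sigma}--\eqref{gamma} is calibrated so that the logarithmic estimate forces the measure of the ``bad set'' below $\nu_*$ after finitely many (in fact one) dyadic steps. The factor $\widetilde M^2$ and the various $\widetilde M$, $M$ dependencies are tracked carefully, which is why $\sigma$ must be chosen exponentially small in $M^3$.

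The main obstacle — and the place where this proof genuinely differs from the fractional $p$-Laplacian case — is the simultaneous control of the two tail terms at scales $sp$ and $tq$ together with the coupling through the modulating coefficient $a(\cdot,\cdot)$. One must verify that the tail quantities appearing in Corollary~\ref{log.cor}, namely $\rho^{sp}d^{1-p}\int_{\R^n\setminus B_R}\frac{u_-^{p-1}+u_-^{q-1}}{|y-x_0|^{n+sp}}\,dy$ and $\rho^{tq}d^{1-q}\int_{\R^n\setminus B_R}\frac{u_-^{q-1}}{|y-x_0|^{n+tq}}\,dy$, remain bounded by an absolute constant (independent of $j$) under the inductive hypothesis — this requires splitting the tail integral over the dyadic annuli $B_{j-i}\setminus B_{j-i-1}$, using $\osc_{B_{j-i}}u \le K_{j-i} = \sigma^{\gamma(j-i)}K_0$, and summing a geometric series whose convergence is exactly guaranteed by the constraints $\gamma \le \frac{sp}{2(p-1)}$ and $\gamma \le \frac{tq}{2(q-1)}$ in \eqref{gamma}. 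The balancing of the scaling exponents between the two phases, enforced by the sharp condition \eqref{assumption.hol} ($tq \le sp + \alpha$) which is what allows $a(\cdot,\cdot)$ to be treated as (almost) frozen at scale $\rho^\alpha$ as in \eqref{aa} and \eqref{a.control}, is the delicate technical heart; everything else is a careful but essentially routine bookkeeping exercise once the logarithmic estimate and Caccioppoli estimate are in hand.
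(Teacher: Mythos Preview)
Your proposal is essentially correct and follows the same inductive De Giorgi scheme as the paper: base case from the definition of $K_0$; tail control via a geometric series over the annuli $B_{i-1}\setminus B_i$ using the full induction hypothesis (the paper's Step~2, yielding \eqref{p.tail}--\eqref{q.tail}, with convergence guaranteed precisely by the constraints on $\gamma$ in \eqref{gamma} that you identify); the logarithmic estimate of Corollary~\ref{log.cor} combined with the dichotomy \eqref{density} to obtain the density estimate \eqref{levelset.density}; and finally the De Giorgi iteration (Caccioppoli plus Lemma~\ref{ineq2}) to pass from small density $A_0 \le \nu_*$ to the pointwise bound $u_j \ge d_j$ in $B_{j+1}$, giving $\omega(r_{j+1}) \le (1-\varepsilon)K_j \le K_{j+1}$.

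One point of confusion in your write-up: in the middle paragraph you have the roles of the two main tools reversed. In the paper, the logarithmic estimate (Step~3) takes the \emph{initial} density-$\tfrac12$ information from \eqref{density} and produces the smallness \eqref{levelset.density} of the bad set $\{u_j \le 2d_j\}$; the De Giorgi iteration (Step~4) then takes this smallness (once it falls below the threshold $\nu_*$ of \eqref{a0.small}) and upgrades it to the \emph{pointwise} lower bound $u_j \ge d_j$. You describe the log estimate as the measure-to-pointwise step and attribute \eqref{levelset.density} to the De Giorgi iteration, which is backwards --- though your subsequent sentence about $\nu_*$ and $\sigma$ gets the logic right again. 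Also, the parameter $d = d_j$ in the paper is $\varepsilon K_j$ with $\varepsilon = \sigma^{sq/(2(q-1))}$ a \emph{fixed} small constant, not something like $\sigma^{\gamma j}K_j$. These are descriptive slips rather than mathematical gaps; the architecture you outline is exactly the paper's.
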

\begin{proof}
\textit{Step 1: Induction.} The proof goes by induction on $j$. For $j=0$ it is obvious from the definition of $K_{0}$.
Now we assume that \eqref{osc.est} holds for all $i \in \{0,...,j\}$ with some $j \geq 0$ and show that it holds also for $j+1$. That is, we will show that
\begin{equation}\label{j+1}
\omega(r_{j+1})\le K_{j+1}.
\end{equation}
Without loss of generality, we assume that 
\[
\omega(r_{j+1})\ge \frac{1}{2}K_{j+1}.
\]
Then, this together with the fact that $\sigma^\gamma\ge \frac{1}{2}$ from \eqref{gamma} implies that
\begin{equation}\label{Kj}
\omega(r_{j})\ge \omega(r_{j+1}) \ge \frac{1}{2}K_{j+1}= \frac{1}{2}\sigma^{\gamma} K_{j}\ge \frac{1}{4}K_j.
\end{equation}
  
We note that  either
\begin{equation}\label{u.large}
\frac{|2B_{j+1}\cap \{ u \geq \inf_{B_{j}}u + \omega(r_{j})/2 \}|}{|2B_{j+1}|} \geq \frac{1}{2}
\end{equation}
or
\begin{equation}\label{u.small}
\frac{|2B_{j+1}\cap \{ u \leq \inf_{B_{j}}u + \omega(r_{j})/2 \}|}{|2B_{j+1}|} \geq \frac{1}{2}
\end{equation}
must hold. We accordingly define
\begin{equation*}
u_{j} \coloneqq
\begin{cases}
u - \inf_{B_{j}}u & \textrm{if \eqref{u.large} holds}, \\
\sup_{B_{j}}u - u & \textrm{if \eqref{u.small} holds}.
\end{cases}
\end{equation*}
Then we have
\begin{equation}\label{density}
u_{j} \geq 0 \;\;\textrm{in}\;\; B_{j} \qquad \textrm{and} \qquad \frac{|2B_{j+1}\cap \{u_{j} \geq \omega(r_{j})/2\}|}{|2B_{j+1}|} \geq \frac{1}{2}.
\end{equation}
Moreover, $u_{j}$ is a weak solution to \eqref{main.eq} satisfying
\begin{equation}\label{uj.sup}
\sup_{B_{i}}|u_{j}| \leq \omega(r_{i}) \leq K_{i} \qquad \forall \; i\in \{0,...,j\}.
\end{equation}

\textit{Step 2: Tail estimates.}
We first claim that
\begin{equation}\label{p.tail}
r_{j}^{sp}\int_{\mathbb{R}^{n}\setminus B_{j}}\frac{|u_{j}(x)|^{p-1}+|u_{j}(x)|^{q-1}}{|x-x_{0}|^{n+sp}}\,dx  \leq cM \sigma^{\frac{sp}{2}}K_{j}^{p-1} 
\end{equation}
and
\begin{equation}\label{q.tail}
r_{j}^{tq}\int_{\mathbb{R}^{n}\setminus B_{j}}\frac{|u_{j}(x)|^{q-1}}{|x-x_{0}|^{n+tq}}\,dx \leq c \sigma^{\frac{tq}{2}}K_{j}^{q-1}
\end{equation}
for a constant $c \equiv c(\data_{1})$.
We will only give the proof of \eqref{p.tail}, since \eqref{q.tail} can be proved in almost the same way with $s$ and $p$ replaced by $t$ and $q$, respectively.  From  \eqref{uj.sup},  \eqref{K0} and \eqref{def.M}, we have
\begin{equation}\label{ahah} \begin{aligned}
\lefteqn{ r_{j}^{sp}\int_{\mathbb{R}^{n}\setminus B_{j}}\frac{|u_{j}(x)|^{p-1}+|u_{j}(x)|^{q-1}}{|x-x_{0}|^{n+sp}}\,dx } \\
& = r_{j}^{sp}\sum_{i=1}^{j}\int_{B_{i-1}\setminus B_{i}}\frac{|u_{j}(x)|^{p-1}+|u_{j}(x)|^{q-1}}{|x-x_{0}|^{n+sp}}\,dx + r_{j}^{sp}\int_{\mathbb{R}^{n}\setminus B_{0}} \frac{|u_{j}(x)|^{p-1}+|u_{j}(x)|^{q-1}}{|x-x_{0}|^{n+sp}}\,dx \\
& \leq \sum_{i=1}^{j}\left(\frac{r_{j}}{r_{i}}\right)^{sp}\left[\left(\sup_{B_{i-1}}|u_{j}|\right)^{p-1}+\left(\sup_{B_{i-1}}|u_{j}|\right)^{q-1}\right] + cM \left(\frac{r_{j}}{r_{1}}\right)^{sp}K_{0}^{p-1} \\
& \le cM\sum_{i=1}^{j}\left(\frac{r_{j}}{r_{i}}\right)^{sp}K_{i-1}^{p-1},
\end{aligned}\end{equation}
where for the first inequality we have used
\begin{align*}
\lefteqn{ r_{j}^{sp}\int_{\mathbb{R}^{n}\setminus B_{0}}\frac{|u_{j}(x)|^{p-1}+|u_{j}(x)|^{q-1}}{|x-x_{0}|^{n+sp}}\,dx }\\
& \le c \left(\frac{r_{j}}{r_{0}}\right)^{sp}\left[\left(\sup_{B_{0}}|u|\right)^{p-1} +  \left(\sup_{B_{0}}|u|\right)^{q-1}\right] +c r_{j}^{sp}\int_{\mathbb{R}^{n}\setminus B_{0}}\frac{|u(x)|^{p-1}+|u(x)|^{q-1}}{|x-x_{0}|^{n+sp}}\,dx \\
& \le cM \left(\frac{r_{j}}{r_{1}}\right)^{sp}K_{0}^{p-1}.
\end{align*}
Now the sum appearing in \eqref{ahah} is estimated as
\begin{align*}
\sum_{i=1}^{j}\left(\frac{r_{j}}{r_{i}}\right)^{sp}K_{i-1}^{p-1} 
& = K_{0}^{p-1}\left(\frac{r_{j}}{r_{0}}\right)^{\gamma(p-1)}\sum_{i=1}^{j}\left(\frac{r_{i-1}}{r_{i}}\right)^{\gamma(p-1)}\left(\frac{r_{j}}{r_{i}}\right)^{sp-\gamma(p-1)} \\
& = K_{j}^{p-1}\sigma^{-\gamma(p-1)}\sum_{i=1}^{j}\sigma^{i(sp-\gamma(p-1))} \\
& \le 2^{p-1} K_{j}^{p-1}\sum_{i=1}^{j}\sigma^{i\frac{sp}{2}}  \leq 2^{p-1} K_{j}^{p-1}\frac{\sigma^{\frac{sp}{2}}}{1-\sigma^{\frac{sp}{2}}}  
\leq 2^p\sigma^{\frac{sp}{2}}K_{j}^{p-1},
\end{align*}
where we have used the facts that $\sigma^{-\gamma}\le 2$, $sp-\gamma(p-1)\ge \frac{sp}{2}$ and $\sigma^{\frac{sp}{2}}\le \frac{1}{2}$ from \eqref{sigma} and \eqref{gamma}.

\textit{Step 3: A density estimate.}
We next apply Corollary \ref{log.cor} to the function
\begin{equation*}
v \coloneqq \min\left\{ \left[ \log\left(\frac{\omega(r_{j})/2 + d}{u_{j} + d}\right) \right]_{+}, k \right\},
\end{equation*}
where $k>0$ is to be chosen and 
\begin{equation}\label{depsilon}
d \equiv d_{j} \coloneqq \varepsilon K_{j} \qquad \textrm{with} \qquad  \varepsilon \coloneqq \sigma^{\frac{sq}{2(q-1)}}\ge \max\left\{\sigma^{\frac{sp}{2(p-1)}},\sigma^{\frac{tq}{2(q-1)}}\right\} .
\end{equation} 
Note that by \eqref{Kj} we see that
\begin{equation}\label{dM}
d_{j} \le 4\omega(r_j) \le 8\|u\|_{L^\infty(\Omega')}, \quad \text{hence} \quad \widetilde M \le c M. 
\end{equation}
Combining the resulting estimate \eqref{log.est} with \eqref{p.tail}-\eqref{q.tail}, we obtain
\begin{equation}\label{log.excess}
\begin{aligned}
\mean{2B_{j+1}}|v-(v)_{2B_{j+1}}|\,dx & \le c M^{2}\left[ 1+Md_{j}^{1-p}\sigma^{\frac{sp}{2}}K_{j}^{p-1} + d_{j}^{1-q}\sigma^{\frac{tq}{2}}K_{j}^{q-1} \right]\\
& \le c M^{3}\left[ 1+\left(d_{j}^{-1}\sigma^{\frac{sp}{2(p-1)}}K_{j}\right)^{p-1} + \left(d_{j}^{-1}\sigma^{\frac{tq}{2(q-1)}}K_{j}\right)^{q-1} \right]\\
&\leq cM^{3}
\end{aligned}
\end{equation}
for a constant $c\equiv c(\data_{1})>0$. In addition, 
we have from \eqref{density} that
\begin{align*}
k & = \frac{1}{|2B_{j+1}\cap\{u_{j} \geq \omega(r_{j})/2\}|}\int_{2B_{j+1}\cap\{v=0\}}(k-v)\,dx  \leq 2\mean{2B_{j+1}}(k-v)\,dx  = 2(k-(v)_{2B_{j+1}}).
\end{align*}
This inequality and \eqref{log.excess} imply
\begin{align*}
\frac{|2B_{j+1}\cap\{v=k\}|}{|2B_{j+1}|} & \leq \frac{2}{k|2B_{j+1}|}\int_{2B_{j+1}\cap\{v=k\}}(k-(v)_{2B_{j+1}})\,dx \\
& \leq \frac{2}{k}\mean{2B_{j+1}}|v-(v)_{2B_{j+1}}|\,dx  \leq \frac{cM^{3}}{k}.
\end{align*}
At this moment, we choose
\begin{equation*}
k = \log\left(\frac{\omega(r_{j})/2+\varepsilon \omega(r_{j})}{3\varepsilon \omega(r_{j})}\right) = \log\left(\frac{1/2+\varepsilon}{3\varepsilon}\right) \ge \log\left(\frac{1}{6\varepsilon}\right) \ge \log\left(\frac{1}{\sqrt{\varepsilon}}\right) = \frac{sq}{4(q-1)}\log\frac{1}{\sigma},
\end{equation*}
where we have used the fact that $\sqrt{\varepsilon}= \sigma^\frac{sq}{4(q-1)}\le \frac{1}{6}$ from \eqref{sigma},
to infer that
\begin{equation}\label{levelset.density}
\frac{|2B_{j+1}\cap\{ u_{j} \leq d_{j} \}|}{|2B_{j+1}|} \leq \frac{cM^{3}}{k} \leq \frac{c_{*}M^{3}}{\log(1/\sigma)}
\end{equation}
for a constant $c_{*}>0$ depending only on $\data_{1}$. 

\textit{Step 4: Iteration.}
Now we proceed with an iteration argument. For $i=0,1,2,...$, we set 
\begin{equation*}
\rho_{i} = (1+2^{-i})r_{j+1},\qquad \tilde{\rho}_{i} = \frac{\rho_{i}+\rho_{i+1}}{2},\qquad B^{i} = B_{\rho_{i}}, \qquad \tilde{B}^{i} = B_{\tilde{\rho}_{i}}
\end{equation*}
and choose corresponding cut-off functions satisfying
\begin{equation*}
\phi_{i} \in C^{\infty}_{0}(\tilde{B}^{i}),\qquad 0\leq\phi\leq1, \qquad \phi_{i} \equiv 1 \textrm{ on } B^{i+1}, \qquad \textrm{and} \qquad |D\phi_{i}| \leq 2^{i+2} r_{j+1}^{-1}.
\end{equation*}
Furthermore, we set
\begin{equation*}
k_{i} = (1+2^{-i})d_{j}, \qquad w_{j} = (k_{i}-u_{j})_{+}
\end{equation*}
and
\begin{equation*}
A_{i} = \frac{|B^{i} \cap \{ u_{j} \leq k_{i} \}|}{|B^{i}|} = \frac{|B^{i} \cap \{ w_{i} \geq 0 \}|}{|B^{i}|}.
\end{equation*}
Notice that
\begin{equation}\label{radii.size}
r_{j+1} < \rho_{i+1} \leq \rho_{i} \leq 2r_{j+1}, \quad d_{j} \leq k_{i+1} \leq k_{i} \leq 2d_{j} \quad \textrm{and} \quad 0 \leq w_{i} \leq k_{i} \leq 2d_{j}.
\end{equation}

We then denote
\[
a_{1} \coloneqq \inf_{B_{2r_{j+1}}\times B_{2r_{j+1}}}a(\cdot,\cdot),
\qquad
a_{2} \coloneqq \sup_{B_{2r_{j+1}}\times B_{2r_{j+1}}}a(\cdot,\cdot)
\] 
and 
\begin{equation*}
G(\tau) \coloneqq \frac{\tau^{p}}{r_{j+1}^{sp}} + a_{2}\frac{\tau^{q}}{r_{j+1}^{tq}}.
\end{equation*} 
Using the first inequality in \eqref{radii.size} 
and applying Lemma~\ref{ineq2} with $f \equiv w_{i}$,
we obtain 
\begin{equation}\label{lhs.ith}
\begin{aligned}
A_{i+1}^{1/\kappa}G(k_{i}-k_{i+1}) & = \left[\frac{1}{|B^{i+1}|}\int_{B^{i+1}\cap \{ u_{j} \leq k_{i+1} \}}[G(k_{i}-k_{i+1})]^{\kappa}\,dx\right]^{\frac{1}{\kappa}}  \\
& \leq \left[ \mean{B^{i+1}} [G(w_{i})]^{\kappa}\,dx \right]^{\frac{1}{\kappa}} \\
& \le cM\mean{B^{i+1}}\int_{B^{i+1}}H(x,y,|w_{i}(x)-w_{i}(y)|)\frac{dxdy}{|x-y|^{n}} + cMG(d_{j})A_{i},
\end{aligned}
\end{equation}
where for the last inequality we have also used the following  estimate: 
\begin{equation*}
\mean{B^{i}}\left|\frac{w_{i}}{\rho_{i+1}^{s}}\right|^{p}+a_{1}\left|\frac{w_{i}}{\rho_{i+1}^{t}}\right|^{q}\,dx 
\le c  \left[\left(\frac{d_{j}}{r_{j+1}^{s}}\right)^{p} + a_{2}\left(\frac{d_{j}}{r_{j+1}^{t}}\right)^{q}\right]\frac{|B^{i}\cap\{u_{j} \leq k_{i}\}|}{|B^{i}|} = cG(d_{j})A_{i},
\end{equation*}
which is immediate from the definitions of $w_i$, $\rho_i$ and $A_i$. In order to estimate the integral on the right-hand side, we apply Lemma \ref{caccioppoli.est} to $w_{i}$ and $\phi_{i}$ in the ball $B^{i}$ (see also Remark \ref{rmk.caccioppoli}). 
Moreover, we estimate the tail term in the right-hand side as in the proof of Lemma \ref{log.lemma} by using \eqref{a.control}:
\begin{equation}\label{caccioppoli.ith}
\begin{aligned}
\lefteqn{\mean{B^{i+1}}\int_{B^{i+1}}H(x,y,|w_{i}(x)-w_{i}(y)|)\,\frac{dxdy}{|x-y|^{n}}} \\
& \le c \mean{B^{i}}\int_{B^{i}}H(x,y,(w_{i}(x)+w_{i}(y))|\phi_{i}(x)-\phi_{i}(y)|)\,\frac{dxdy}{|x-y|^{n}} \\
& \quad + c\left(\sup_{y \in \tilde{B}^{i}}\int_{\mathbb{R}^{n}\setminus B^{i}}\frac{w_{i}^{p-1}(x)+w_{i}^{q-1}(x)}{|x-y|^{n+sp}}+a_{2}\frac{w_{i}^{q-1}(x)}{|x-y|^{n+tq}}\,dx\right)\mean{B^{i}}w_{i}(x)\phi_{i}^{q}(x)\,dx.
\end{aligned}
\end{equation}
We estimate the terms in the right-hand side of \eqref{caccioppoli.ith} separately. By the definitions of $w_i$ and $\phi_i$, we have 
\begin{equation}\label{wi.ith}
\begin{aligned}
&\mean{B^{i}}\int_{B^{i}}H(x,y,(w_{i}(x)+w_{i}(y))|\phi_{i}(x)-\phi_{i}(y)|)\frac{dxdy}{|x-y|^{n}}\\
& \le c 2^{ip}r_{j+1}^{-p}k_{i}^{p}\frac{1}{|B^{i}|}\int_{B^{i}\cap\{u_{j}\leq k_{i}\}}\int_{B^{i}}\frac{1}{|x-y|^{n+(s-1)p}}\,dydx \\
& \quad + c 2^{iq}a_{2}r_{j+1}^{-q}k_{i}^{q}\frac{1}{|B^{i}|}\int_{B^{i}\cap\{u_{j}\leq k_{i}\}}\int_{B^{i}}\frac{1}{|x-y|^{n+(t-1)q}}\,dydx \\
& \le c2^{iq}\frac{|B^{i}\cap\{u_{j} \leq k_{i}\}|}{|B^{i}|}\left( r_{j+1}^{-sp}d_{j}^{p}+ a_{2}r_{j+1}^{-tq}d_{j}^{q}\right)\\
& =c 2^{iq}G(d_{j})A_{i},
\end{aligned}
\end{equation}
and 
\begin{equation}\label{rhs.ith}
\mean{B^{i}}w_{i}(x)\phi_{i}^{q}(x)\,dx \le c d_{j}A_{i}.
\end{equation}
As for the tail term, we first observe the following facts:
$\frac{|x-x_{0}|}{|x-y|} \leq 1+\frac{|y-x_{0}|}{|x-y|} \leq 1 + \frac{2r_{j+1}}{2^{-(i+1)r_{j+1}}} = 2^{i+3}$ 
for $x\in \R^n\setminus B^i$ and $y \in \tilde{B}^{i}$; $w_i\le k_i \le 2d_j$ in $B_j$; and $w_i \le k_i + |u_j| \le 2d_j+|u_j|$ in $\R^n\setminus B_j$.
Using these facts, \eqref{p.tail}, \eqref{q.tail}, \eqref{depsilon} and \eqref{dM}, we see that
\begin{equation}\label{Tail.ith}
\begin{aligned}
\lefteqn{ \sup_{y\in\tilde{B}^{i}}\int_{\mathbb{R}^{n}\setminus B^{i}}\frac{w_{i}^{p-1}(x) + w_{i}^{q-1}(x)}{|x-y|^{n+sp}}+a_{2}\frac{w_{i}^{q-1}(x)}{|x-y|^{n+tq}}\,dx } \\
& \le c 2^{i(n+tq)}\int_{\mathbb{R}^{n}\setminus B_{j+1}}\frac{w_{i}^{p-1}(x)+w_{i}^{q-1}(x)}{|x-x_{0}|^{n+sp}} +a_2\frac{w_{i}^{q-1}(x)}{|x-x_{0}|^{n+tq}}\,dx \\
& \le c 2^{i(n+tq)}\int_{\R^n\setminus B_{j+1}}\frac{d_{j}^{p-1}+d_{j}^{q-1}}{|x-x_{0}|^{n+sp}} +a_2\frac{d_{j}^{q-1}}{|x-x_{0}|^{n+tq}}\,dx \\
&\quad + c 2^{i(n+tq)}\int_{\mathbb{R}^{n}\setminus B_{j}}\frac{|u_{j}(x)|^{p-1}+|u_{j}(x)|^{q-1}}{|x-x_{0}|^{n+sp}} +a_2\frac{|u_{j}(x)|^{q-1}}{|x-x_{0}|^{n+tq}}\,dx \\
& \le c 2^{i(n+tq)}M\left(\frac{d_{j}^{p-1}}{r_{j+1}^{sp}}+a_{2}\frac{d_{j}^{q-1}}{r_{j+1}^{tq}}\right)  +c 2^{i(n+tq)}M\left(\frac{\sigma^{\frac{sp}{2}}}{\varepsilon^{p-1}}\frac{d_{j}^{p-1}}{r_{j+1}^{sp}} + a_{2}\frac{\sigma^{\frac{tq}{2}}}{\varepsilon^{q-1}}\frac{d_{j}^{q-1}}{r_{j+1}^{tq}}\right) \\
& \le c 2^{i(n+tq)}M\frac{G(d_{j})}{d_{j}}.
\end{aligned}\end{equation}

Therefore, combining  \eqref{lhs.ith}, \eqref{caccioppoli.ith}, \eqref{wi.ith}, \eqref{rhs.ith} and \eqref{Tail.ith}, we arrive at
\begin{align*}
A_{i+1}^{1/\kappa} G(2^{-i-1}d_{j}) = A_{i+1}^{1/\kappa} G(k_{i}-k_{i+1})  
\leq c2^{i(n+tq+q)}M^{2}G(d_{j})A_{i},
\end{align*}
which implies
\begin{equation*}
A_{i+1}\leq c_{0}2^{i\kappa(n+tq+2q)}M^{2\kappa}A_{i}^{\kappa}
\end{equation*}
for a constant $c_{0}>0$ depending only on $\data_{1}$.
In order to apply Lemma \ref{iterlem}, it should be guaranteed that
\begin{equation}\label{a0.small}
A_{0} \leq (c_{0}M^{2\kappa})^{-1/(\kappa-1)}2^{-(n+tq+2q)\kappa/(\kappa-1)^{2}} \eqqcolon \nu_{*}.
\end{equation}
This inequality holds by \eqref{levelset.density} and \eqref{sigma}. More precisely, we have 
\begin{equation*}
A_{0} = \frac{|2B_{j+1}\cap\{ u_{j} \leq 2d_{j} \}|}{|2B_{j+1}|} \leq \frac{c_{*}M^{3}}{\log(1/\sigma)} \le \nu_*.
\end{equation*}
Hence it follows that
$A_{i}\rightarrow 0$ as $i\rightarrow\infty$, which means that
\begin{equation*}
u_{j} \geq d_{j} =\varepsilon K_{j} \qquad \textrm{a.e. in}\;\; B_{j+1}.
\end{equation*}
From this with \eqref{uj.sup} and \eqref{gamma}, we finally obtain \eqref{j+1} as follows:
\begin{equation*}
\omega(r_{j+1}) = \sup_{B_{j+1}}u_{j} - \inf_{B_{j+1}}u_{j} \leq (1-\varepsilon)K_{j} = \left(1-\sigma^{\frac{tq}{2(q-1)}}\right)\sigma^{-\gamma}K_{j+1} \le K_{j+1}.
\end{equation*}
\end{proof}

\providecommand{\bysame}{\leavevmode\hbox to3em{\hrulefill}\thinspace}
\providecommand{\MR}{\relax\ifhmode\unskip\space\fi MR }
\providecommand{\MRhref}[2]{%
  \href{http://www.ams.org/mathscinet-getitem?mr=#1}{#2}
}
\providecommand{\href}[2]{#2}

\end{document}